\title{Densely related groups}
\author{Yves Cornulier}
\address{Laboratoire de Math\'{e}matiques, B\^{a}timent 425, Universit\'{e} Paris-Sud 11, 91405 Orsay, France}
\email{yves.cornulier@math.u-psud.fr}
\author{Adrien Le Boudec}
\thanks{The authors were supported by ANR-14-CE25-0004 GAMME}
\thanks{ALB is a F.R.S.-FNRS Postdoctoral Researcher.}
\address{UCLouvain, IRMP,	Chemin du Cyclotron 2, 1348 Louvain-la-Neuve, Belgium}
\email{adrien.leboudec@uclouvain.be}
\date{August 7, 2017}
\subjclass[2010]{Primary: 20F05, 
Secondary: 20F65, 
20F69, 
20F16, 
22D05.
}
\newtheorem{thm}{Theorem}[section]
\newtheorem{cor}[thm]{Corollary}
\newtheorem{lem}[thm]{Lemma}
\newtheorem{prop}[thm]{Proposition}
\theoremstyle{definition}
\newtheorem{defn}[thm]{Definition}
\newtheorem{definition}[thm]{Definition}
\newtheorem{que}[thm]{Question}
\newtheorem{conj}[thm]{Conjecture}
\theoremstyle{remark}
\newtheorem{rem}[thm]{Remark}
\newtheorem{ex}[thm]{Example}
\numberwithin{equation}{section}
\newcommand{\Z}{\mathbf{Z}}
\newcommand{\N}{\mathbf{N}}
\newcommand{\R}{\mathbf{R}}
\newcommand{\K}{\mathbf{K}}
\newcommand{\RRS}{\mathcal{R}_S(G)}
\newcommand{\RR}{\mathcal{R}(G)}
\begin{document}

\maketitle

\begin{abstract}

We study the class of densely related groups. These are finitely generated (or more generally, compactly generated locally compact) groups satisfying a strong negation of being finitely presented, in the sense that new relations appear at all scales. Here, new relations means relations that do not follow from relations of smaller size. Being densely related is a quasi-isometry invariant among finitely generated groups. 

We check that a densely related group has none of its asymptotic cones simply connected. In particular a lacunary hyperbolic group cannot be densely related. 

We prove that the Grigorchuk group is densely related. We also show that a finitely generated group that is (infinite locally finite)-by-cyclic and which satisfies a law must be densely related. Given a class $\mathcal{C}$ of finitely generated groups, we consider the following dichotomy: every group in $\mathcal{C}$ is either finitely presented or densely related. We show that this holds within the class of nilpotent-by-cyclic groups and the class of metabelian groups. In contrast, this dichotomy is no longer true for the class of $3$-step solvable groups. 
\end{abstract}








\section{Introduction}

If $G$ is a group and $S$ a generating subset, a \textbf{relation} in $G$ is by definition an element of the kernel of the natural map $F_S \twoheadrightarrow G$. A relation $w \in F_S$ is said to be generated by a subset $R \subset F_S$ if $w$ belongs to the normal subgroup of $F_S$ generated by $R$. Equivalently, $w$ can be written as a product of conjugates of elements of $R^{\pm 1}$. To every pair $(G,S)$, where $G$ is a group and $S$ a generating subset, we study the {\bf relation range} $\RRS$, which is the set of lengths of relations in $G$ that are not generated by relations of smaller length. Here the length of an element of $F_S$ refers to the word length associated to $S$. The set $\RRS$ is finite precisely when the kernel of $F_S \twoheadrightarrow G$ is normally generated by elements of bounded length. When $S$ is finite, this exactly means that the group $G$ is finitely presented. The relation range was introduced by Bowditch in \cite{Bowditch-RR}, but has not been explicitly considered since then, except in the small cancelation case \cite{Thomas-Borel}.

The study of the relation range takes part in a program to find ``measures" of the failure of finite presentability for finitely generated groups. Several approaches, of independent interest, have been carried out so far:
\begin{itemize}
\item In \cite{BCGS}, several strengthenings of infinite presentability have been investigated, in terms of the study of the poset of normal subgroups contained in the kernel of epimorphisms from finitely presented groups to a given finitely generated group. However, these properties are not known to be quasi-isometry invariant, or are known not to be (see Remark \ref{abelscon}). 
\item In \cite{CT-Abels}, to one finitely generated group $G$, one associates the set of nonprincipal ultrafilters $\omega$ such that the asymptotic cone $\mathrm{Cone}^{\omega}(G,(1/n))$ is simply connected. This is obviously a quasi-isometry invariant.
\end{itemize}

The study of the relation range is in spirit closer to the latter one, but it is not based on asymptotic cones (although we make a connection at some point) and therefore more of combinatorial flavor. The relation range has a distinct behavior than the aforementioned invariant from \cite{CT-Abels}, and may detect properties not seen by this invariant (see Remark \ref{rem-diff-CT}).

We stress out that the relation range of a finitely generated group is not defined in terms of a presentation of the group. Moreover in general there is no connection between the relation range and the set of lengths of relators in a given presentation of the group, even in the case of a minimal presentation (see Remark \ref{rem-min-RR}).

We should mention that there exist other notions turning out to be strengthenings of infinite presentability, such as the negation of being of homological type $\mathrm{FP}_2$ over a given commutative ring. This is a quasi-isometry invariant;  Alonso's proof from \cite{Alo} in the case of the ring $\Z$ carrying over the general case.

\begin{defn}
A finitely generated group $G$ is {\bf densely related} if there exists $c>0$ such that for all $n\ge 1$, we have $\RRS\cap [n,cn]\neq\emptyset$. (This does not depend on the finite generating subset $S$, see Corollary \ref{cor-intro-qi-inv}.)  
\end{defn}

These are groups in which new relations (that is, not generated by relations of smaller length) appear at all scales, in a sense precisely defined in the sequel. A typical example of a densely related group is the wreath product $\Gamma = \Z \wr \Z$, which admits the presentation $\left\langle t,x \, \mid \, [t^nxt^{-n},x]=1 \, \, \text{for all} \, \, n \geq 1\right\rangle$. By a result of Baumslag \cite{Bau-61}, this presentation is minimal. The relation $[t^nxt^{-n},x]$ is not generated by relations of smaller length, and therefore $\mathcal{R}_{\{t,x\}}(\Gamma)$ contains the integer $4n+4$ for every $n \geq 1$.

\medskip

Recall that Gromov proved that if a finitely generated group $G$ has all its asymptotic cones simply connected, then $G$ is finitely presented and has a polynomially bounded Dehn function \cite[\S 5.F]{Gro-asy}. In turns out that simple connectedness of \textit{one} asymptotic cone of $G$ already has consequences, namely that $G$ cannot be densely related (see Proposition \ref{prop-1-cone-sc} for a more precise result):

\begin{thm}[Corollary \ref{cor-full-pres-cones}] \label{ascone_intro}
Densely related finitely generated groups have no simply connected asymptotic cone.
\end{thm}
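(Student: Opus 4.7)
The strategy is by contradiction, combining the densely related hypothesis with the standard Papasoglu--Gromov principle converting simple connectedness of an asymptotic cone into a filling property at the corresponding scale. Suppose $G$ is densely related with parameter $c$, yet $\mathrm{Cone}^\omega(G,(1/s_n))$ is simply connected for some non-principal ultrafilter $\omega$ and some scaling sequence $s_n\to\infty$.

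The key ingredient, classical since Gromov and made precise by Papasoglu, reads as follows: if $\mathrm{Cone}^\omega(G,(1/s_n))$ is simply connected, then for every $\varepsilon>0$ the set of indices $n$ such that every relation $w\in\ker(F_S\twoheadrightarrow G)$ of length at most $s_n$ is a product of conjugates of relations of length $\leq\varepsilon s_n$ is $\omega$-large. The proof is standard: a loop of length comparable to $s_n$ in the Cayley graph rescales to a loop in the cone, which bounds a continuous disk by hypothesis; a sufficiently fine triangulation of this disk lifts vertex-by-vertex back to the Cayley graph, and the images of its $2$-simplices realize the sought decomposition. This is presumably the precise content of Proposition~\ref{prop-1-cone-sc} mentioned earlier, which one may invoke as a black box.

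With the filling lemma in hand, pick $\varepsilon<1/c$. On an $\omega$-large set of indices $n$ both the filling property holds and $s_n$ is large enough that $s_n/c-1>\varepsilon s_n$. Applying dense relatedness to the integer $m=\lfloor s_n/c\rfloor$ produces a relation whose length $\ell$ lies in $[m,cm]\subseteq[s_n/c-1,\,s_n]$ and which is not generated by relations of length strictly less than $\ell$. On the other hand, the filling property writes this very relation as a product of conjugates of relations of length at most $\varepsilon s_n<s_n/c-1\leq\ell$, contradicting its membership in $\RRS$.

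The only delicate input is therefore the Papasoglu--Gromov filling lemma itself and the careful handling of the $\omega$-almost-sure quantifiers; once that is correctly set up, the matching between the dense-relatedness window $[n,cn]$ and the filling radius $\varepsilon s_n$ is essentially bookkeeping, the point being that picking $\varepsilon$ strictly below $1/c$ forces the filling cells to be strictly shorter than the new relation they are supposed to generate.
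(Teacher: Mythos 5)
Your proof is correct and is essentially the same argument as the paper's: the paper likewise rescales a sequence of relations that are not generated by shorter ones to a loop in the cone, fills it, takes a fine partition, and pulls it back via Lemma \ref{lem-limit-loops}, using exactly your van Kampen-type correspondence between partitions of small mesh and products of conjugates of short relations. The only caveat is that your ``black box'' is not Proposition \ref{prop-1-cone-sc} (that proposition is the conclusion $\{s_n:n\in\N\}\not\subset\mathcal{R}(G)$, not a uniform filling statement), and no single-cone filling lemma in this form is directly quotable, so you must prove it; your sketch does, but note that it needs the ultralimit of a whole sequence of bad loops (one for $\omega$-almost every index), since a single loop at a single scale does not by itself define a loop in the cone --- with that quantifier handled, your bookkeeping with $\varepsilon<1/c$ against the densely related window $[m,cm]$ is fine.
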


\medskip

Any (non-trivial) wreath product is an example of a densely related group (Proposition \ref{prop-wreath-dr}). The following construction provides other kind of examples: Let $G$ be a non-Hopfian group, and let $\varphi: G \rightarrow G$ be a surjective endomorphism of $G$ with non-trivial kernel. We denote by $K_{\varphi}$ the increasing union of the normal subgroups $\ker (\varphi^n)$, $n \geq 1$, and by $G_{\varphi} = G / K_{\varphi}$ the associated quotient. 

\begin{prop}[Corollary \ref{cor-lim-hopf-fully}] \label{prop-intro-hopf-densely}
Let $G$ be a compactly generated locally compact group. Assume that $\varphi$ is a continuous surjective and non-injective endomorphism of $G$ such that $K_{\varphi}$ is a closed subgroup of $G$. Then the group $G_{\varphi} = G / K_{\varphi}$ is densely related.
\end{prop}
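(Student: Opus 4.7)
The plan is to exploit the strictly increasing filtration $\ker\varphi\subsetneq\ker\varphi^{2}\subsetneq\cdots$ of $K_{\varphi}$; the inclusions are strict because $\varphi$ is surjective with nontrivial kernel (if $\ker\varphi^{k}=\ker\varphi^{k+1}$, then for any $x\in\ker\varphi$ one lifts $x$ through the surjection $\varphi^{k}$ to get $y$ with $y\in\ker\varphi^{k+1}=\ker\varphi^{k}$, whence $x=\varphi^{k}(y)=1$). Fix a compact symmetric generating neighbourhood $S$ of $G$, so that its image generates $G_{\varphi}$, and let $\pi_{G}\colon F_{S}\twoheadrightarrow G$ be canonical. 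For $w\in F_{S}$ with $\pi_{G}(w)\in K_{\varphi}$ (equivalently, a relation in $G_{\varphi}$), define its \emph{depth} $d(w)=\min\{k\geq 0:\pi_{G}(w)\in\ker\varphi^{k}\}$. Because each $\ker\varphi^{k}$ is normal in $G$, depth is monotone under normal generation: any element of the $F_{S}$-normal closure of a family of relations of depth $\leq k$ has itself depth $\leq k$. Consequently, letting $D(n)=\max\{d(w):|w|\leq n,\ \pi_{G}(w)\in K_{\varphi}\}$, whenever $D(n)>D(n-1)$ there is a relation $w$ of length exactly $n$ with $d(w)>D(n-1)\geq d(w')$ for every shorter relation $w'$, so $w$ is not normally generated by shorter relations and $n$ belongs to the relation range of $G_{\varphi}$.

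To control the scales at which $D$ jumps, I construct a Lipschitz word-level section $\tilde{\sigma}\colon F_{S}\to F_{S}$ that raises depth by exactly one. The open mapping theorem, applied to $\varphi$ as a continuous surjection between compactly generated (hence $\sigma$-compact) locally compact groups, makes $\varphi$ open; combined with compactness of $S$ this gives $k_{1}\geq 1$ with $S\subseteq\varphi(S^{k_{1}})$. For each $s\in S$, pick a preimage of $s$ in $S^{k_{1}}$ together with a representing word $\tilde{\sigma}(s)\in F_{S}$ of length at most $k_{1}$, and extend by the universal property of the free group to a homomorphism $\tilde{\sigma}\colon F_{S}\to F_{S}$. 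Then $\varphi\circ\pi_{G}\circ\tilde{\sigma}=\pi_{G}$ on generators, hence on all of $F_{S}$. For any $w$ with $d(w)=k\geq 1$ this gives $\pi_{G}(\tilde{\sigma}(w))\in\varphi^{-1}(\ker\varphi^{k})=\ker\varphi^{k+1}$, while $\varphi^{k-1}(\pi_{G}(w))\neq 1$ prevents $\pi_{G}(\tilde{\sigma}(w))$ from lying in $\ker\varphi^{k}$; thus $d(\tilde{\sigma}(w))=k+1$ while $|\tilde{\sigma}(w)|\leq k_{1}|w|$.

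Combining: let $\ell_{1}$ be the minimum word length of a nontrivial element of $\ker\varphi$. For every $n\geq\ell_{1}$ one has $D(n)\geq 1$, and applying $\tilde{\sigma}$ to a realiser of $D(n)$ yields a relation of depth $D(n)+1$ and length at most $k_{1}n$, so $D(k_{1}n)\geq D(n)+1$ and $D$ must jump somewhere in $[n+1,k_{1}n]\subseteq[n,k_{1}n]$. This gives a point of the relation range of $G_{\varphi}$ in every window $[n,k_{1}n]$ with $n\geq\ell_{1}$; taking $c=\max(k_{1},\ell_{1})$ absorbs smaller $n$. The main technical step I anticipate is the open-mapping/compactness argument producing the bounded-length section over $S$ in the general locally compact compactly generated setting -- this is also where the closedness hypothesis on $K_{\varphi}$ enters to guarantee that $G_{\varphi}$ is a well-behaved locally compact group on which the relation range is even meaningful. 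The algebraic core -- depth monotonicity paired with the Lipschitz raising map $\tilde{\sigma}$ -- is then entirely formal.
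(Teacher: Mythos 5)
Your argument is, in essence, the paper's own proof specialized from a family $\Phi=\{\varphi_i\}$ to a single endomorphism: the strictly increasing filtration $K_n=\ker\varphi^n$ (the paper's Lemma \ref{lem-seq-Kn}), a bounded-length lifting of elements through $\varphi$ (the paper's Lemma \ref{lem-hyp3-metric}, which you implement as the word-level section $\tilde\sigma$), and the conclusion that raising ``depth'' by one at bounded multiplicative cost forces a new relation in every window $[n,cn]$ (the proof of Theorem \ref{thm-lim-fully}). So the mechanism is correct and matches the paper.

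There is, however, one step you assert without justification, and it is precisely the step that is not formal in the locally compact setting: the finiteness (hence attainment) of $D(n)=\max\{d(w):|w|\le n,\ \pi_G(w)\in K_\varphi\}$. Your whole scheme needs this: you take a ``realiser'' of $D(n)$, and the detection of a jump point $m$ with $D(m)>D(m-1)$ presupposes that these quantities are finite. When $G$ is discrete and finitely generated this is trivial (balls are finite), but for a general compactly generated locally compact group the set $K_\varphi\cap S^n$ can be infinite, and a priori its elements could have unbounded depth. This is exactly where the closedness of $K_\varphi$ is used in the paper (see the discussion preceding Theorem \ref{thm-lim-fully}): each $\ker\varphi^n$ is closed by continuity, $K_\varphi$ is a closed, hence Baire, subgroup which is the increasing union of the $\ker\varphi^n$, so some $\ker\varphi^{n_0}$ is open in $K_\varphi$; then the compact set $K_\varphi\cap S^n$ is contained in a single $\ker\varphi^{n_k}$, i.e.\ depth is bounded on balls. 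Your proposal instead attributes the closedness hypothesis only to making $G_\varphi$ a reasonable locally compact group, which misses its essential role in the proof; without this Baire-category input the argument does not go through in the stated generality. (A minor further point: your final constant $c=\max(k_1,\ell_1)$ does not quite handle $n<\ell_1$; take e.g.\ $c=k_1\ell_1$, since for $n\ge 1$ the window $[n,k_1\ell_1 n]$ contains $[\max(n,\ell_1),k_1\max(n,\ell_1)]$.)
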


Examples of groups covered by Proposition \ref{prop-intro-hopf-densely} are the semidirect products $\Z[1/pq] \rtimes \Z$, where the action is by multiplication by $p/q$, where $p,q\ge 2$ are coprime integers. See \S \ref{subsec-non-hopf} for more examples.


Proposition \ref{prop-intro-hopf-densely} is a particular case of a more general construction investigated in \S \ref{subsec-non-hopf}. Let $G$ be a group with a finite index normal subgroup $H$, which come with a finite set $\Phi = \left\{\varphi_i\right\}$ of surjective homomorphisms $\varphi_i: H \rightarrow G$. Under appropriate assumptions on $G$, $H$ and $\Phi$ (see \S \ref{subsec-non-hopf}), this naturally defines an increasing sequence of normal subgroups $K_n \lhd G$, so that the groups $G/K_n$ form a directed system. We show that the associated direct limit $G/K_\infty$ is always densely related (Theorem \ref{thm-lim-fully}). An example of group which may be obtained via this construction is the Grigorchuk group $\mathfrak{G}$ introduced in \cite{grig80}, so that we obtain the following result.

\begin{thm}
The Grigorchuk group $\mathfrak{G}$ is densely related. In particular no asymptotic cone of $\mathfrak{G}$ is simply connected.
\end{thm}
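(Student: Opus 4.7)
The plan is to realize the Grigorchuk group $\mathfrak{G}$ as a direct limit $G/K_\infty$ produced by the construction of \S \ref{subsec-non-hopf}, so that Theorem \ref{thm-lim-fully} gives that $\mathfrak{G}$ is densely related; the statement about asymptotic cones is then immediate from Theorem \ref{ascone_intro}.

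The required data comes from the self-similar structure of $\mathfrak{G}$. Recall that $\mathfrak{G}$ acts faithfully on the rooted binary tree, and the stabilizer $H$ of the first level is a normal subgroup of index $2$. Restriction of the action to each of the two subtrees gives an embedding $\psi \colon H \hookrightarrow \mathfrak{G} \times \mathfrak{G}$ whose image projects surjectively to each coordinate; composing with the factor projections yields a pair of surjective homomorphisms $\varphi_1, \varphi_2 \colon H \twoheadrightarrow \mathfrak{G}$. This is exactly the shape of the triple $(G, H, \Phi = \{\varphi_1, \varphi_2\})$ used in \S \ref{subsec-non-hopf}.

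Concretely, I would take $G$ to be the finitely presented group obtained from the free group on $\{a,b,c,d\}$ by imposing only the ``initial'' relations $a^2, b^2, c^2, d^2, bcd$; the branching data then lifts to a finite-index normal subgroup $H \lhd G$ together with surjections $\varphi_i \colon H \to G$. Iterating the $\varphi_i$ on the initial relators produces the sequence $K_n \lhd G$, and Lysenok's presentation of $\mathfrak{G}$ identifies $G/K_\infty$ with $\mathfrak{G}$: the relators of this presentation are exactly the iterates of the two base relators $(ad)^4$ and $(adacac)^4$ under the substitution encoding $\varphi_1, \varphi_2$.

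The main obstacle will be verifying that $(G, H, \Phi)$ satisfies the specific hypotheses of the construction of \S \ref{subsec-non-hopf}, and in particular that the tower $K_n$ is \emph{strictly} increasing: this is ensured by Grigorchuk's theorem that $\mathfrak{G}$ is not finitely presented, which prevents the tower from stabilizing. The density condition $\mathcal{R}_S(\mathfrak{G}) \cap [n, cn] \neq \emptyset$ then follows from the fact that the branching substitution scales word lengths by a uniformly bounded factor (roughly $2$): the relators in $K_n \setminus K_{n-1}$ have length on the order of $2^n$, so they hit every interval $[n, cn]$ with $c$ slightly larger than $2$.
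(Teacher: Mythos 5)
Your overall route is the paper's: take $G=C_2\ast(C_2\times C_2)$, the index-two subgroup $H$, the two lifted surjections $\varphi_0,\varphi_1\colon H\to G$, apply Theorem~\ref{thm-lim-fully}, identify $G_\Phi$ with $\mathfrak{G}$, and conclude the cone statement from Corollary~\ref{cor-full-pres-cones}. But the execution has a real gap: the hypotheses of Theorem~\ref{thm-lim-fully} are (1), (2), (3) of \S\ref{subsec-non-hopf}, and the substantive one, (3) --- that the image of $h\mapsto(\varphi_0(h),\varphi_1(h))$ in $G\times G$ contains the \emph{diagonal} --- is never addressed in your proposal; surjectivity of each coordinate map, which is all you mention, is strictly weaker and does not suffice. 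The paper verifies (3) via the commuting diagrams with $\psi_0,\psi_1\colon\mathfrak{H}\to\mathfrak{G}$ and Grigorchuk's result that the image of $(\psi_0,\psi_1)$ contains $\Delta(\mathfrak{G})$ \cite{Grig-minimal}. Your substitute verification --- deducing strictness of the tower $(K_n)$ from the fact that $\mathfrak{G}$ is not finitely presented --- is doubly off: strictness is not a hypothesis of Theorem~\ref{thm-lim-fully} but a consequence of (1) and (3) (Lemma~\ref{lem-seq-Kn}); and the inference itself is incomplete, since if the tower stabilized at $K_{n_0}$ you would only get $\mathfrak{G}\cong G/K_{n_0}$, and to contradict infinite presentability you would additionally need $K_{n_0}$ to be finitely generated as a normal subgroup of $G$, which is not automatic. (The identification $G_\Phi\cong\mathfrak{G}$ is also not ``Lysionok's presentation''; it is the content of \cite[Th.\ 4]{Grig-Mama}, which relates the tower $(K_n)$ to the iterated relators.)

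The final paragraph, taken as an argument for density, would fail. Exhibiting relators of length about $2^n$ in an (even minimal, endomorphic) presentation does not place those lengths in $\mathcal{R}_S(\mathfrak{G})$: one must show these relations are not products of conjugates of \emph{shorter relations}, i.e.\ shorter elements of the kernel, and Remark~\ref{rem-min-RR} shows exactly that minimality of a presentation gives no control of this kind. (The multiplicative arithmetic is not the problem --- indeed $\{2^n\}\sim\N$ --- the problem is justifying membership in the relation range at all.) This is precisely what the proof of Theorem~\ref{thm-lim-fully} supplies and what your sketch omits: for every $k$ one takes a relation of length at most $k$ lying as deep as possible in the tower $(N^{(n)})$, and the diagonal property (3), made quantitative by Lemma~\ref{lem-hyp3-metric}, pulls it back to a relation of length at most $ck$ lying strictly deeper, hence not generated by relations of length at most $k$; this produces relation-range elements in $[k,ck]$ at \emph{every} scale. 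So if you actually invoke Theorem~\ref{thm-lim-fully} (after checking (1)--(3)), your last paragraph is unnecessary; as a replacement for it, it is not a proof.
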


\medskip

Finitely generated groups that are not densely related are called {\bf lacunary presented}. Of course this includes finitely presented groups, but here we are especially interested by infinitely presented ones.

The idea of considering sequences of relations of sparse lengths was used by Bowditch in order to provide a continuum of pairwise non-quasi-isometric infinitely presented small cancelation groups \cite{Bowditch-RR}. It was then shown by Thomas and Velickovic that this yields the first example of a finitely generated group having non-homeomorphic asymptotic cones (more precisely, one asymptotic cone that is a real tree, and one asymptotic cone that is not simply connected) \cite{TV}. This motivated the introduction of {\it lacunary hyperbolic groups} (groups with at least one asymptotic cone a real tree) by Olshanskii, Osin and Sapir \cite{OOS}. They showed that this class of groups is actually very large. These are examples of groups that are lacunary presented, not all of which being finitely presented. However, the class of lacunary presented groups is much larger (see Remark \ref{comparisonLHLP}).

\medskip

Given $A,B \subset \R_+$, we write $A \preccurlyeq B$ if there exists $c>0$ such that for every $a \in A$, there exists an element of $B$ in the interval $[c^{-1} a,c a]$. We write $A \sim B$ when $A \preccurlyeq B$ and $B \preccurlyeq A$, and we say that $A,B$ are at finite multiplicative Hausdorff distance. This defines an equivalence relation for subsets of $\R_+$, which identifies all non-empty finite subsets. 


The following result is a particular case of Theorem \ref{thm-qi-inv}, which holds in the more general context of connected graphs. We refer to \S \ref{subsec-inv-qi} for the relevant terminology.

\begin{thm} \label{thm-intro-LS-inv}
Let $G,S$ and $G',S'$ be two pairs of groups and generating subsets. If the metric space $(G,d_S)$ is a large-scale Lipschitz retract of $(G',d_{S'})$, then $\RRS \preccurlyeq \mathcal{R}_{S'}(G')$.
\end{thm}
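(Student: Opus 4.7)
The plan is to reformulate $n \in \mathcal{R}_S(G)$ geometrically in terms of fillings of loops in the Cayley 2-complex, and then to transport such fillings across the coarse retract. For $k \geq 1$, let $X_k(G,S)$ denote the 2-complex obtained from $\mathrm{Cay}(G,S)$ by attaching a disk along every closed edge-loop of length at most $k$. Then $n \in \mathcal{R}_S(G)$ iff some closed edge-loop of length $n$ based at the identity fails to be null-homotopic in $X_{n-1}(G,S)$. The key consequence is that if $\mathcal{R}_{S'}(G')$ contains no integer in an interval $[a,b]$, then by induction on length every closed edge-loop in $\mathrm{Cay}(G',S')$ of length at most $b$ is null-homotopic in $X_{a-1}(G',S')$.

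Next I would fix a constant $L \geq 1$ controlling both large-scale Lipschitz maps $\iota:(G,d_S)\to(G',d_{S'})$ and $r:(G',d_{S'})\to(G,d_S)$ as well as the inequality $d_S(r(\iota(g)),g)\leq L$. Connecting consecutive $\iota$-images of a closed edge-loop $\gamma$ of length $n$ in $\mathrm{Cay}(G,S)$ by $S'$-geodesics produces a closed edge-loop $\gamma'$ in $\mathrm{Cay}(G',S')$ of length at most $2Ln$. I would proceed by contradiction: assume $n \in \mathcal{R}_S(G)$ is witnessed by $\gamma$ and that $\mathcal{R}_{S'}(G') \cap [c^{-1}n, cn] = \emptyset$ for a constant $c$ to be chosen. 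Taking $c \geq 2L$ gives $|\gamma'| \leq cn$, so by the first paragraph $\gamma'$ is a product of conjugates of edge-loops $\delta'_i$ in $\mathrm{Cay}(G',S')$ of length $< c^{-1}n$. Applying $r$ to each $\delta'_i$ and closing it up with $S$-geodesics of length at most $L$ yields edge-loops $\delta_i$ in $\mathrm{Cay}(G,S)$ of length $\leq 2Lc^{-1}n$. Using $r \circ \iota \sim \mathrm{id}_G$ to identify $\gamma$ with $r(\gamma')$ up to a homotopy built from loops of size at most $K(L)$ for an explicit $K(L)$, one obtains a factorization of $\gamma$ as a product of conjugates of the $\delta_i$ together with auxiliary loops of length at most $K(L)$. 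Choosing $c > 2L^2$ and $c > K(L)$ makes every loop involved of length $< n$ as soon as $n$ exceeds an absolute threshold depending only on $L$, contradicting $n \in \mathcal{R}_S(G)$; the finitely many small values of $n$ are absorbed into the multiplicative constant of $\preccurlyeq$.

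The main obstacle I anticipate is the bookkeeping of the additive errors inherent in large-scale Lipschitz data: each transport inserts $O(1)$-length connectors, and each invocation of $r \circ \iota \sim \mathrm{id}_G$ contributes auxiliary loops whose size is bounded only in terms of $L$. Controlling these auxiliary loops and verifying that they bound in the sub-complex $X_{K(L)}$ is routine but must be carried out carefully, and it is precisely this additive slack that forces $\mathcal{R}_S(G) \preccurlyeq \mathcal{R}_{S'}(G')$ to be multiplicative rather than additive. The argument is a scale-sensitive refinement of the standard proof that a coarse retract of a simply connected space is simply connected, localized to finite filling thresholds.
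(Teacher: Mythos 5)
Your argument is correct and reaches the same contradiction as the paper's, but it is implemented with different machinery. The paper does not work with Cayley $2$-complexes and products of conjugates directly: it proves a more general statement for connected graphs (Theorem \ref{thm-qi-inv}) by encoding ``new relations at scale $r$'' as failure of $\pi_1$-injectivity of the inclusion of Rips complexes $R_r\to R_{2r}$, proving that a map at bounded distance from the identity induces homotopic maps on Rips complexes (Lemma \ref{distid}), and then running a diagram chase: push a non-trivial class forward by $\bar f$, kill it using the assumed injectivity on the $Y$-side, pull back by $\bar g$, and use the homotopy to contradict non-triviality on the $X$-side. Your version is the scale-localized Alonso/van Kampen argument: you fill the pushed-forward loop by cells of length $<c^{-1}n$ (using the absence of relation range in $[c^{-1}n,cn]$), transport the filling back through $r$ with geodesic interpolation, and correct by the ladder of quadrilaterals of perimeter $K(L)$ coming from $r\circ\iota\sim\mathrm{id}$. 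The two routes are equivalent in content; yours is more elementary and yields explicit constants, but requires exactly the combinatorial bookkeeping you flag (discretizing $r$ on a disk diagram, the $K(L)$ quadrilaterals, and the threshold $n>K(L)$), whereas the Rips-complex formulation packages all of that into one homotopy lemma and, being stated for arbitrary connected graphs (indeed for spaces satisfying the coarse condition (*)), covers at once the compactly generated locally compact case used elsewhere in the paper. One shared caveat: absorbing the finitely many small $n$ into the constant tacitly uses that $\mathcal{R}_{S'}(G')$ is non-empty (or the convention identifying all finite relation ranges), a degenerate case the paper glosses over in the same way, so it is not a gap specific to your write-up.
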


\begin{cor}[Bowditch] \label{cor-intro-qi-inv}
If the metric spaces $(G,d_S)$ and $(G',d_{S'})$ are quasi-isometric, then $\RRS \sim \mathcal{R}_{S'}(G')$.
\end{cor}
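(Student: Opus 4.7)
The plan is to deduce this directly from Theorem~\ref{thm-intro-LS-inv} by observing that any quasi-isometry between two metric spaces naturally realizes each of them as a large-scale Lipschitz retract of the other. Applying the theorem in each direction then gives the two $\preccurlyeq$ comparisons whose conjunction is exactly the equivalence $\sim$.

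More explicitly, fix a quasi-isometry $f\colon (G,d_S)\to(G',d_{S'})$ together with a quasi-inverse $g\colon (G',d_{S'})\to(G,d_S)$. Both $f$ and $g$ are large-scale Lipschitz, since this is exactly the upper bound of the quasi-isometric inequality, and the two compositions $g\circ f$ and $f\circ g$ are at bounded distance from the identity maps of $G$ and $G'$ respectively. With the standard coarse-geometric convention under which a large-scale Lipschitz retract only requires the retraction to be a left inverse up to bounded error, this exhibits $(G,d_S)$ as a large-scale Lipschitz retract of $(G',d_{S'})$ via $(f,g)$, and symmetrically $(G',d_{S'})$ as a large-scale Lipschitz retract of $(G,d_S)$ via $(g,f)$. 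Applying Theorem~\ref{thm-intro-LS-inv} in both directions then yields $\RRS\preccurlyeq \mathcal{R}_{S'}(G')$ and $\mathcal{R}_{S'}(G')\preccurlyeq \RRS$, which together say exactly that $\RRS\sim \mathcal{R}_{S'}(G')$.

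There is no real obstacle once Theorem~\ref{thm-intro-LS-inv} is available: the deduction is purely formal. The one point worth checking is the precise convention for \emph{large-scale Lipschitz retract} adopted in Section~\ref{subsec-inv-qi}. If that definition demands an exact left inverse rather than one up to bounded error, then one inserts a brief preliminary observation: any map at bounded distance from a large-scale Lipschitz map is itself large-scale Lipschitz, and bounded perturbations do not affect the hypotheses or conclusion of Theorem~\ref{thm-intro-LS-inv}, so a quasi-inverse can be adjusted to an exact retraction without changing the resulting comparison of relation ranges.
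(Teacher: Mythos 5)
Your proposal is correct and is essentially the paper's own argument: the quasi-isometry case of Theorem~\ref{thm-qi-inv} (of which Theorem~\ref{thm-intro-LS-inv} is the group-theoretic instance) is obtained exactly by noting that a quasi-isometry and a quasi-inverse exhibit each space as a large-scale Lipschitz retract of the other, and applying the retract statement in both directions. Your closing caveat is moot, since the paper's definition in \S\ref{subsec-inv-qi} already only asks the composite $X\to Y\to X$ to be at bounded distance from the identity, so no adjustment of the quasi-inverse is needed.
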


Corollary \ref{cor-intro-qi-inv} implies in particular that if $G$ is endowed with a (possibly discrete) locally compact topology, and if $S$ is a compact generating subset, then the set $\RRS$ does not depend, up to finite multiplicative Hausdorff distance, on the choice of $S$. 
We denote by $\RR$ the associated equivalence class, and we call $\RR$ the \textbf{relation range} of the group $G$. According to Corollary \ref{cor-intro-qi-inv}, the relation range $\RR$ in an invariant of the quasi-isometry class of $G$.

In this setting, we have the following two extreme situations:

\begin{itemize}
\item $\RR$ is finite. When $G$ is a finitely generated group, the fact that $\RR$ is finite exactly means that $G$ is finitely presented. More generally in the locally compact setting, the groups having a finite relation range are precisely the compactly presented groups. For an introduction to compactly presented groups, we refer the reader to the book \cite{Cor-dlH}.
\item $\RR \sim \N$. By definition the groups $G$ such that $\RR \sim \N$ are the densely related groups.
\end{itemize}

It is natural to ask which subsets of $\N$ are, up to multiplicative Hausdorff distance, the relation range of a finitely generated group. The easy answer is: all of them. Indeed we can start from the above presentation of $\Z\wr\Z$ and choose an arbitrary subset of relators. Namely, given any subset $I\subset\N\smallsetminus\{0\}$, defining $\Gamma_I=\left\langle t,x \, \mid \, [t^nxt^{-n},x]=1 \, \, \text{for all} \, \, n\in I\right\rangle$, the relation range of $(\Gamma_I,\{t,x\})$ is equal to $\{0\}\cup (4I+4)$, and every non-empty subset of $\N$ is at finite Hausdorff distance to a subset of this form. As a consequence, the relation range distinguishes continuum many quasi-isometry classes of groups, a fact that was established by Bowditch in \cite{Bowditch-RR} (using small cancelation rather than the previous presentation). Bowditch's strategy, combined with work of Olshanskii-Osin-Sapir \cite{OOS}, actually shows that every subset $I\subset\N$ such that $I \nsim \N$ is the relation range of a finitely generated lacunary hyperbolic group (see \S \ref{subsec-various-RR}).

Still, it remains natural to ask the same question within more restricted classes of groups. First recall that if $\mathcal{P},\mathcal{Q}$ are properties, a group is said to be $\mathcal{P}$-by-$\mathcal{Q}$ when there is a normal subgroup with $\mathcal{P}$ whose associated quotient has $\mathcal{Q}$.

\begin{prop} \label{prop-intro-all-subsets}
Every subset of $\N$ is (equivalent to) the relation range of a (3-nilpotent locally finite)-by-abelian finitely generated group.
\end{prop}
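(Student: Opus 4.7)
The plan is to imitate the construction $\Gamma_I = \langle t, x \mid [t^n x t^{-n}, x] = 1,\, n \in I \rangle$ described in the preceding paragraph, while imposing enough additional bounded-length ``structural'' relators $\mathcal{L}$ to force the resulting quotient of the free group $F_{\{t,x\}}$ into the target class of (3-nilpotent locally finite)-by-abelian groups, without essentially altering the equivalence class of the relation range.

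Fix a prime $p \geq 5$ and consider the variety $\mathcal{W}$ defined by the two laws $[a,b]^p = 1$ and $[[[[a_1,b_1],[a_2,b_2]],[a_3,b_3]],[a_4,b_4]] = 1$. In any $G \in \mathcal{W}$, the derived subgroup $[G,G]$ is 3-nilpotent (by the second law, which is precisely $\gamma_4([G,G]) = 1$) and generated by commutators all of order dividing $p$ (by the first law). Since torsion elements form a subgroup in a nilpotent group, $[G,G]$ is torsion; since finitely generated torsion nilpotent groups are finite, $[G,G]$ is locally finite. Thus every $G \in \mathcal{W}$ is (3-nilpotent locally finite)-by-abelian. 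Define $G_I$ as the quotient of the relatively free group $F_\mathcal{W}(\{t,x\})$ by the normal closure (inside $F_\mathcal{W}$) of $\{[t^n x t^{-n}, x] : n \in I\}$; then $G_I \in \mathcal{W}$, and it is the target of the analysis.

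The core task is to show that the relation range $\mathcal{R}_{\{t,x\}}(G_I)$ is equivalent to $\{0\} \cup (4I+4)$, from which the proposition follows because every non-empty subset of $\N$ is at finite multiplicative Hausdorff distance from a set of this form. That the commutator relators $\{[t^n x t^{-n}, x] : n \in I\}$ contribute the lengths $4I+4$ is established by a Baumslag-style minimality argument: for each $n \in I$, one produces a witness group $Q_n \in \mathcal{W}$ with distinguished generators $T, X$ in which all commutator relators $[T^m X T^{-m}, X]$ for $m \in I$ with $m < n$ hold but $[T^n X T^{-n}, X] \neq 1$. A natural candidate for $Q_n$ is a truncated lamplighter-type semidirect product of a finite 3-nilpotent $p$-group by a finite cyclic group, tailored to the initial segment of $I$ up to $n$. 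The main obstacle is to control the ``structural'' contribution to the relation range coming from the variety: modulo the commutator relators already imposed, the remaining verbal relations of $\mathcal{W}$ must contribute only a bounded set of lengths to $\mathcal{R}_{\{t,x\}}(G_I)$. This reduces to a Hall--Magnus-style combinatorial statement asserting that the verbal subgroup of $\mathcal{W}$ in $F_{\{t,x\}}$, computed modulo the normal closure of $\{[t^n x t^{-n}, x] : n \in I\}$, is generated as a normal subgroup by finitely many bounded-length elements, and is the crucial technical input to be verified.
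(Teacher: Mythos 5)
Your route is genuinely different from the paper's, but its crux is exactly the step you leave unverified, and that step is not a technical verification to be filled in later: it is false as stated. You need that, modulo the normal closure of $\{[t^nxt^{-n},x]:n\in I\}$, the verbal subgroup of $\mathcal{W}$ in $F_{\{t,x\}}$ is normally generated by boundedly many bounded-length elements. Take $I=\emptyset$ (or $I$ finite): your claim would then say that the rank-two relatively free group $F_{\mathcal{W}}(t,x)$ has finite relation range, i.e.\ is finitely presented. But this group satisfies a law, hence contains no nonabelian free subgroup, so by Bieri--Strebel its metabelianization would have to be finitely presented; that metabelianization is the free rank-two group of the variety of metabelian groups with exponent-$p$ derived subgroup, whose derived subgroup is a free $\mathbf{F}_p[\Z^2]$-module of rank one, and such ``free'' metabelian-type groups of rank $\geq 2$ are never finitely presented. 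So the structural (verbal) relations already contribute new relations at unboundedly many scales, and there is no reason they stop doing so after you add a sparse set of lamplighter relators. Conceptually, this is the message of the paper's own results (Theorem \ref{thm-intro-law-by-Z}, Theorem \ref{thm-intro-classes}): forcing the group into a variety, i.e.\ imposing a law, pushes groups of this lamplighter type towards being densely related, which is precisely why the $\Gamma_I$-style construction cannot simply be transplanted into a solvable variety. (Your first half --- witness groups $Q_n$ certifying the lengths $4n+4$ for $n\in I$ --- is plausible but is the easy direction; the upper bound is where the construction breaks.)

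The paper's proof avoids this by a different mechanism (\S\ref{subsec-abels}): start from a \emph{finitely presented} group, Abels' group $A_4(R)$ with $R=\mathbf{F}_p[t,t^{-1},(t+1)^{-1}]$, whose unipotent part is $3$-step nilpotent and locally finite and whose quotient by it is abelian, and kill the central subgroup $Z_I$ generated by the corner elementary matrices $z_i$, $i\in I$. Because the ambient group is finitely presented, Corollary \ref{cor-central-ext-fp} identifies the relation range of the quotient $Q_I=A_4(R)/Z_I$ with the set of lengths at which new elements of $Z_I$ appear, and the distortion of the center (Lemmas \ref{lem-bilip-lengths} and \ref{lem-X-Z_I}) shows this set is exactly $I$, giving Proposition \ref{prop-RR-abels}. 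In other words, arbitrary relation ranges inside solvable groups are produced not by sparse presentations within a variety, but by quotienting a compactly presented group by an infinitely generated central subgroup whose generators sit at prescribed length scales; that is the idea your proposal is missing.
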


Note in particular that infinitely presented solvable groups can be lacunary presented. In contrast, non-elementary lacunary hyperbolic groups are never solvable. Indeed, more generally they cannot satisfy a law \cite[Cor.\ 6.13]{DS}. Recall that a group $G$ \textbf{satisfies a law} if there exists a non-trivial group word $w(x_1,\dots,x_k)$ in a free group such that $w(g_1,\dots,g_k)=1$ for all $g_1,\dots,g_k\in G$.

\medskip

Now given a certain class of groups, we want to study the behavior of the relation range within this class. Proposition \ref{prop-intro-all-subsets} says that the class of amenable (and even solvable) groups is not yet sufficiently restrictive, whereby the necessity to focus on smaller classes of groups. The following result provides classes of finitely generated groups for which the behavior of the relation range is completely understood.

\begin{thm} \label{thm-intro-classes}
Let $G$ be a finitely generated group. Assume that one of the following holds:
\begin{enumerate}
	\item $G$ is metabelian, or more generally center-by-metabelian;
	\item $G$ is nilpotent-by-cyclic.
\end{enumerate}
Then $G$ is either finitely presented or densely related.  
\end{thm}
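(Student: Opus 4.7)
The plan is to attack both cases through Bieri and Strebel's characterization of finite presentation as a tameness condition on a naturally associated module. When tameness fails, I will exhibit at each scale $n$ a relation $r_n\in F_S$ of length $\Theta(n)$ whose non-triviality modulo shorter relations is witnessed by a valuation-theoretic argument, thereby placing an element of $\RRS$ in $[n,Cn]$ for a fixed constant $C$.

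For the metabelian case, write $A=[G,G]$ and $Q=G/A$, view $A$ as a finitely generated $\Z Q$-module, and recall that $G$ is finitely presented if and only if no antipodal pair $\pm\chi$ of characters in $\Hom(Q,\R)\setminus\{0\}$ both lies in the complement $\Sigma^c(A)$ of the Bieri--Strebel invariant $\Sigma(A)$. Assuming $G$ is not finitely presented, fix such a pair $\pm\chi$. Failure of $\chi$-tameness produces, for each $n$, an element $a_n\in A$ of $\chi$-valuation near $-n$ which is not a $\Z Q_{\chi\geq 0}$-combination of elements of strictly larger $\chi$-valuation; combining with a $-\chi$-side witness yields a word $r_n\in F_S$ of length $\Theta(n)$ that evaluates to $1$ in $G$.

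The heart of the argument is to certify that $r_n$ is not generated by relations of length $o(n)$. For this, I would use that any product of conjugates $\prod w_i s_i w_i^{-1}$ of relations $s_i$ of length at most $\ell$, when pushed through the Magnus embedding of the metabelianization, gives an element of $A$ whose $\chi$-support lies in a band of $\chi$-valuation of diameter $O(\ell)$; the extremal choice of $a_n$ then forces at least one $s_i$ to have length comparable to $n$, yielding dense relatedness. The extension to the center-by-metabelian case proceeds along the same lines: the derived subgroup $A=G'$ is two-step nilpotent with central commutator $G''$, but $A/G''$ is still a $\Z Q$-module with $Q=G/A$ abelian, and the same $\Sigma$-analysis applies, with the bounded additional cost of lifting relations through the central extension $G''\hookrightarrow A\twoheadrightarrow A/G''$.

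For the nilpotent-by-cyclic case I may assume $G=N\rtimes\Z$ with $N$ nilpotent, since if $G/N$ is finite then $G$ is virtually nilpotent and hence finitely presented by Mal'cev. The metabelianization $G/[N,N]$ is the abelian-by-cyclic group $N^{\mathrm{ab}}\rtimes\Z$, governed by the same Bieri--Strebel criterion applied to the two characters $\pm\chi\colon\Z\to\R$. If $G$ is not finitely presented, then $N^{\mathrm{ab}}$ fails the tameness condition as a $\Z[t,t^{-1}]$-module, and the previous construction yields relations $r_n$ of length $\Theta(n)$ in $G/[N,N]$. These lift to $G$ at the same order of length, the lift incurring only a bounded cost because $N$ has finite nilpotency class, and the projection $G\twoheadrightarrow G/[N,N]$ preserves the obstruction to expressing $r_n$ by shorter relations. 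The main obstacle throughout is the valuation/length bookkeeping: carefully calibrating how $S$-word length in $G$ controls $\chi$-valuation on the module side, which is the delicate but standard geometric input from metabelian and abelian-by-nilpotent groups.
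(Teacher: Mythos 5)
The decisive gap is the transfer step in the center-by-metabelian and nilpotent-by-cyclic cases. Your relations $r_n$ live in the metabelian quotient $\bar G = G/[N,N]$ (resp.\ $G/G''$): such a word evaluates in $G$ to an element $z_n$ of $[N,N]$ that is in general nontrivial, so to get a relation of $G$ you must correct it by a word $v_n$ representing $z_n$, and $|v_n|$ is only bounded by $\Theta(n)$, not by a constant (``bounded cost because $N$ has finite nilpotency class'' is not substantiated and is not true at the level of word length). Worse, the obstruction does not project back: if the corrected word $r_nv_n^{-1}$ were a product of conjugates of $G$-relations of length $\le\ell$, pushing to $\bar G$ only shows that $r_n$ is a consequence of $\bar G$-relations of length $\le\max(\ell,|v_n|)=\Theta(n)$, which does not contradict your choice of $r_n$. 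There is no general principle to invoke here: a group surjecting onto a densely related group need not be densely related (a free group surjects onto all of them), and the paper's Abels-type examples (Proposition \ref{prop-RR-abels} and the surrounding remarks on nilpotent-by-abelian groups, where the dichotomy fails) show concretely that relation-range information does not pass between a group and its quotients by central or nilpotent kernels at bounded cost. The paper never argues through a quotient: since these groups satisfy a law, it works directly in $G=M\rtimes\Z$ (the non-contracting map onto $\Z$ coming from Theorem \ref{thm-metab-inf-res}, or lifted from $G/Z$), forms the truncated HNN-extensions $G_n$ of $M_{[0,n]}$, shows by Britton's lemma that two explicit words of length $2n+4$ generate a free subgroup of $G_n$ (Lemmas \ref{lem-ij} and \ref{lem-free-sub}), and then evaluates the law on suitable words in them to get an honest relation of $G$ of length $O(n)$ that is nontrivial in $G_n$, hence not a consequence of relations of length $\le 2n+3$ (Lemma \ref{rrp}). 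That mechanism for producing relations in $G$ itself is what your sketch is missing.

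There are also problems in the metabelian case, where no quotient intervenes. Your key quantitative claim — that a product of conjugates $\prod w_is_iw_i^{-1}$ of relations of length $\le\ell$ has $\chi$-support in a band of diameter $O(\ell)$ — is false as stated: the conjugators translate supports arbitrarily, and a product of many conjugates can have support of arbitrarily large diameter. The correct statement is of the type of Lemma \ref{rrp}: every relation of length $\le 2n+3$ already holds in the truncation $G_n$, and the real work is to show a specific candidate of length $\Theta(n)$ fails in $G_n$; for a non-abelian kernel $M$ the obvious commutator words are not even relations of $G$, which is exactly why the paper replaces them by values of the law on a free subgroup (Proposition \ref{prop-law-by-Z}). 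Two further points need justification in your write-up: the character $\chi$ produced by the $\Sigma$-criterion may have dense image, whereas your valuation/length bookkeeping requires a homomorphism onto $\Z$ (the paper avoids this by quoting Bieri--Strebel--Groves in the form of a $\Z$-quotient that does not split ascendingly over a finitely generated subgroup); and your assertion that non-finite-presentability of $N\rtimes\Z$ forces $N^{\mathrm{ab}}$ to be non-tame is true but requires the Frattini-type argument for nilpotent groups ($H[N,N]=N$ implies $H=N$) to promote a contraction on $N^{\mathrm{ab}}$ to one on $N$, which you do not give.
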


In combination with Theorem \ref{ascone_intro}, this implies the following: 

\begin{cor}
Let $G$ be as in Theorem \ref{thm-intro-classes}. If $G$ admits one simply connected asymptotic cone, then $G$ is finitely presented.
\end{cor}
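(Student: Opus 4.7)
The plan is simply to chain the two theorems already stated in the introduction, via contraposition. Suppose $G$ is a finitely generated group belonging to one of the classes in Theorem \ref{thm-intro-classes}, and assume that $G$ is \emph{not} finitely presented. By the dichotomy in Theorem \ref{thm-intro-classes}, we then know that $G$ must be densely related. Applying Theorem \ref{ascone_intro} to the densely related group $G$ yields that no asymptotic cone of $G$ is simply connected. Taking the contrapositive: if $G$ admits at least one simply connected asymptotic cone, then $G$ is finitely presented.

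Since the deduction is purely formal, there is no real obstacle at this stage; all the work is hidden inside the two invoked theorems, whose proofs appear elsewhere in the paper. The only tiny point worth checking is that the hypothesis of the corollary (``admits one simply connected asymptotic cone'') is literally the negation of the conclusion of Theorem \ref{ascone_intro} (``has no simply connected asymptotic cone''), which is manifest. In particular, no additional assumptions on the generating set, ultrafilter, or scaling sequence are needed, because Theorem \ref{ascone_intro} rules out simple connectedness of \emph{every} asymptotic cone, while the hypothesis of the corollary only asks for \emph{some} simply connected asymptotic cone to exist; thus these are exact negations of each other and the contrapositive goes through with no further argument.
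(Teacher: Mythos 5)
Your proposal is correct and matches the paper's argument exactly: the corollary is obtained by combining Theorem \ref{thm-intro-classes} with Theorem \ref{ascone_intro}, i.e.\ an infinitely presented group in these classes would be densely related and hence could have no simply connected asymptotic cone. Nothing further is needed.
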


Recall that it follows from Bieri-Strebel theorem that a (non virtually cyclic) finitely generated group $G$ that is (locally finite)-by-cyclic is infinitely presented \cite{BS78}. By a construction due to Olshanskii, Osin and Sapir \cite[\S 3.5]{OOS}, (locally finite)-by-cyclic groups are not always densely related. Moreover, although we will not go into this direction here, the flexibility of their construction suggests that the relation range of (locally finite)-by-cyclic groups may be arbitrary. The following result shows that, under the additional assumption that the group satisfies a law, the relation range is forced to be as large as possible.

\begin{thm} \label{thm-intro-locfin-Z}
Let $G = N \rtimes \Z$ be a finitely generated group, where $N$ is an infinite locally finite subgroup. If $G$ satisfies a law, then $G$ is densely related. In particular $G$ has no simply connected asymptotic cone.
\end{thm}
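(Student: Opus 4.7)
The plan is to exhibit, for every large $n$, an explicit relation $r_n \in F_S$ of length $\Theta(n)$ arising from the law, and to verify that $r_n$ is not a consequence of relations of strictly smaller length. This provides elements of $\RRS$ in every multiplicative interval $[n, cn]$, so $G$ is densely related; the asymptotic cone statement then follows from Theorem~\ref{ascone_intro}.

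Pick a generator $t$ of the $\Z$-factor and a finite symmetric $X \subset N$ normally generating $N$ in $G$, so $S = \{t\} \cup X$ generates $G$. For $n \ge 0$, let $H_n = \langle t^j X t^{-j} : |j| \le n\rangle \subset N$. Local finiteness of $N$ makes each $H_n$ finite; because $tH_n t^{-1} \subset H_{n+1}$, an equality $H_n = H_{n+1}$ would force $H_n$ to be $\langle t\rangle$-invariant and thus equal to the normal closure of $X$, namely $N$, contradicting $|N| = \infty$. Hence $H_n \subsetneq H_{n+1}$ for every $n$, and in particular for every $m \ge 1$ there exists $x = x_m \in X$ with $t^m x t^{-m} \notin H_{m-1}$.

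Let $w(y_1, \ldots, y_k) = 1$ be a nontrivial law of $G$. For each $n$, the substitution $y_1 \mapsto t^n x t^{-n}$, $y_i \mapsto x$ for $i \ge 2$, with $x$ chosen as above at a scale $m \asymp n$, yields a word $r_n \in F_S$ of length $\Theta(n)$ that is a relation in $G$. To show $r_n$ is not a consequence of relations in $G$ of length $<\alpha n$ (for some fixed $\alpha > 0$), I would construct an auxiliary group $\tilde G_n$ with a surjection $\tilde G_n \twoheadrightarrow G$ enjoying: (a) every relation of $G$ of length $<\alpha n$ also holds in $\tilde G_n$; (b) the image of $r_n$ in $\tilde G_n$ is nontrivial. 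A natural candidate is the HNN extension of the finite group $H_m$ (for a suitable $m$ with $\alpha n < m < n$) along the pair of subgroups $(H_{m-1}, tH_{m-1}t^{-1})$, with the associated isomorphism given by conjugation by $t$ in $G$. Property (a) follows because any word of length $<\alpha n < m$ has its $\langle t\rangle$-trace inside $\{-m,\ldots,m\}$, so the $N$-elements it visits lie in $H_{m-1}$ and the HNN relations faithfully reproduce those of $G$. For property (b), the choice $t^m x t^{-m} \notin H_{m-1}$ implies that $t^n x t^{-n}$ cannot be fully reduced to an element of $H_m$ in $\tilde G_n$ --- its HNN normal form must carry leftover stable letters --- so $r_n$ represents a nontrivial HNN word.

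The main obstacle lies in verifying (b) rigorously: one must show that the HNN word representing $r_n$ is indeed nontrivial, which requires a Britton-style analysis of the normal form together with the identification of a finite quotient of $\tilde G_n$ that preserves the non-triviality. The hypothesis that $G$ satisfies a law is essential here --- it restricts the variety containing $G$ enough to control the quotients of $\tilde G_n$ in which this detection can be carried out. Once (b) is established, the standard argument (walking up in length from $\alpha n$ to $|r_n|$ and observing the step at which $r_n$ ceases to be generated by shorter relations) yields some $\ell_n \in [\alpha n, |r_n|]$ with $\ell_n \in \RRS$; as $n$ varies, these $\ell_n$ fill every multiplicative interval, concluding that $G$ is densely related.
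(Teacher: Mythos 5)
Your overall architecture (approximate $G$ by an HNN extension of a finite truncation, show relations of length $<\alpha n$ survive there, and exhibit a law-relation of length $\Theta(n)$ that does not) is the same as the paper's, and your step (a) is essentially Lemma \ref{rrp}. But the crux is exactly your step (b), and the substitution you propose cannot deliver it. First, sending $y_2,\dots,y_k$ all to the same element $x$ can make $w(t^nxt^{-n},x,\dots,x)$ collapse for trivial reasons (e.g.\ if $w$ contains a subword that becomes a commutator of equal entries, the evaluated word can even be freely trivial in $F_S$), in which case $r_n$ witnesses nothing. Second, and more fundamentally, in your auxiliary group $\tilde G_n$ both substituted elements are torsion: $x$ lies in the finite vertex group and the stable letter conjugates it to an element of the same finite order, so the subgroup they generate is never free, and there is no reason the nontrivial word $w$ fails to vanish on it. Your suggestion that the law ``controls the quotients of $\tilde G_n$'' is backwards: the law is only used to certify that $r_n$ is a relation in $G$; the detection of nontriviality in $\tilde G_n$ has to come from producing elements on which $w$ provably does not vanish, i.e.\ from a free subgroup of $\tilde G_n$, which exists precisely because $\tilde G_n$ does not satisfy the law. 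No Britton-normal-form analysis of $t^nxt^{-n}$ alone, nor any passage to a finite quotient, can substitute for this.

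The paper's proof (Proposition \ref{prop-law-by-Z}, applied after noting that for $N$ infinite locally finite the $\Z$-action cannot contract into a finitely generated, hence finite, subgroup --- the role played by your observation $H_n\subsetneq H_{n+1}$) fills this gap by a more careful choice of elements. Using Lemma \ref{lem-ij} it finds generators with $m_i\notin M_{[1,n]}$ and $m_j^{[n]}\notin M_{[0,n-1]}$, and then takes the \emph{products} $a_n=m_i^{[-2]}m_j^{[n-1]}$ and $b_n=m_im_j^{[n+1]}$, i.e.\ products of two torsion elements sitting at well-separated levels. Decomposing the kernel of $G_n\to\Z$ as a two-sided iterated amalgam, Lemma \ref{routine} shows each of $a_n,b_n$ has all nonzero powers outside the relevant edge groups, so they freely generate a free subgroup of $G_n$ (Lemma \ref{lem-free-sub}); then $a_nb_n,a_n^2b_n^2,\dots,a_n^kb_n^k$ freely generate a rank-$k$ free subgroup, so $w$ evaluated on them is nontrivial in $G_n$ while being a relation in $G$ of length $O(n)$. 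If you want to salvage your write-up, you must replace your substitution by such a construction of a free subgroup of controlled word length inside $\tilde G_n$; as it stands, step (b) is a genuine gap, not a technicality.
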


The existence of a law in a finitely generated group or in some large enough subgroup was already known to have consequences at the level of asymptotic cones. For instance if $G$ satisfies a law then no asymptotic cone of $G$ can be tree-graded in the sense of Drutu and Sapir \cite[Cor.\ 6.13]{DS}. Also if $G$ has a subgroup $H$ of relative exponential growth that satisfies a law, then no asymptotic cone of $G$ can be real tree \cite[Th.\ 3.18(c)]{OOS}, \cite[Th.\ 1.4]{LB-lac-hyp}. The idea is that the existence of a law prevents ultraproducts from containing free subgroups, and this has consequences on their possible transitive isometric group actions. The approach of Theorem \ref{thm-intro-locfin-Z} is different in the sense that the existence of a law is interpreted directly at the level of the group (rather than ultraproducts) in order to produce appropriate relations.

\medskip

A common feature of the proofs of Theorem \ref{thm-intro-classes} and Theorem \ref{thm-intro-locfin-Z} is the study of the relation range of finitely generated groups with a given homomorphism onto the group $\Z$. Recall that given $\pi: G \twoheadrightarrow \Z$, we say that the action of $\Z$ \textbf{contracts into a finitely generated subgroup} of $G$ if there is a decomposition of $G$ as an ascending HNN-extension over a finitely generated group whose associated homomorphism onto $\Z$ is equal to $\pi$.


\begin{thm} \label{thm-intro-law-by-Z}
Let $G = M \rtimes \Z$ be a finitely generated group satisfying a law. Then either the action of $\Z$ contracts into a finitely generated subgroup of $M$, or the group $G$ is densely related.  
\end{thm}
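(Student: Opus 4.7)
Let $w = w(x_1, \ldots, x_k)$ be a non-trivial word representing a law satisfied by $G$, write $G = M \rtimes \langle t \rangle$, and fix a finite symmetric generating set $S = \{t^{\pm 1}\} \sqcup F$ with $F \subset M$. For each $N \geq 0$ set $H_N = \langle t^i f t^{-i} : f \in F,\ |i| \leq N \rangle \leq M$; these form an increasing exhaustion $M = \bigcup_N H_N$ by finitely generated subgroups. The hypothesis that the $\Z$-action does not contract into any finitely generated subgroup of $M$ is equivalent to the following: for every $N \geq 0$, neither $tH_Nt^{-1}$ nor $t^{-1}H_Nt$ is contained in $H_N$, so that at every scale $N$ and for each sign $\epsilon \in \{\pm 1\}$ there is some $f \in F$ with $t^{\epsilon(N+1)} f t^{-\epsilon(N+1)} \notin H_N$.

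To populate the relation range, I would manufacture, for every large $n$, a candidate new relation by evaluating the law on spread-out conjugates. Fix $f \in F$ and choose shifts $\alpha_1(n) < \cdots < \alpha_k(n)$ in $[-n, n]$ whose consecutive gaps are all of order $n$, and set
\[
W_n = w\bigl(t^{\alpha_1(n)} f t^{-\alpha_1(n)},\ \ldots,\ t^{\alpha_k(n)} f t^{-\alpha_k(n)}\bigr) \in F_S.
\]
Since $w$ is a law in $G$, $W_n$ lies in $\ker(F_S \twoheadrightarrow G)$, and its word length is bounded linearly in $n$. The task reduces to showing that for some uniform $c \in (0,1)$, the word $W_n$ is not generated in $F_S$ by relations of $G$ of length at most $cn$; granted this, every sufficiently long interval $[n, Cn]$ meets $\mathcal{R}_S(G)$ for a uniform $C$, proving that $G$ is densely related.

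The core of the argument, and its main obstacle, is constructing an auxiliary group $\widetilde{G}_n$ together with a morphism $F_S \to \widetilde{G}_n$ which (i) kills every relation of $G$ of length at most $cn$, yet (ii) does not kill $W_n$. Since $G$ satisfies the law $w$, the target $\widetilde{G}_n$ must itself violate $w$ and hence cannot be a quotient of $G$; I would build it by amalgamating two isomorphic shifted copies of $G$ along a subgroup capturing their common scale-$\leq cn$ structure, invoking the failure of the ascending HNN property to guarantee that the amalgamation remains non-degenerate on each copy outside the glued region. The conjugates $t^{\alpha_i(n)} f t^{-\alpha_i(n)}$ with $\alpha_i$ on opposite sides of the interface then live in distinct factors of the amalgamation and interact only through the short relations imported from $G$, leaving enough freedom for the law $w$ to evaluate non-trivially on them. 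Simultaneously verifying (i) and (ii) requires a delicate locality statement for short relations (so that every short relator of $G$ can be read inside a single copy of the truncation) together with an induction on scales in which each step invokes a fresh witness of non-contraction; this is the delicate point on which the whole argument rests.
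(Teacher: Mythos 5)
Your high-level scaffolding is the right one and matches the paper's: approximate $G$ by auxiliary groups in which all short relations of $G$ already hold, and use the law to manufacture a relation of length comparable to $n$ that survives in the auxiliary group, hence is not generated by relations of length $\le cn$. But the two load-bearing steps are exactly the ones you leave open, and as sketched they would not go through. First, the auxiliary group: ``amalgamating two shifted copies of $G$ along their common scale-$\le cn$ structure'' is not a construction -- it is unclear along which subgroup you amalgamate, where the stable generator $t$ is sent under $F_S\to\widetilde G_n$, and why every relation of $G$ of length $\le cn$ dies while the long one survives. What works (and is what the paper does) is to truncate $M$ rather than $G$: set $M_{[0,n]}=\langle t^kSt^{-k},\,0\le k\le n\rangle$ and let $G_n$ be the HNN-extension of $M_{[0,n]}$ along the conjugation isomorphism $M_{[0,n-1]}\to M_{[1,n]}$; a short counting argument on occurrences of $t^{\pm1}$ shows every relation of $G$ of length $\le 2n+3$ already holds in $G_n$, and the kernel of $G_n\to\Z$ is a two-sided iterated amalgam of copies of $M_{[0,n]}$, which is where the free subgroups needed to defeat the law live. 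No induction on scales or ``fresh witnesses'' per step is required; the argument is uniform in $n$.

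Second, your test word $W_n=w\bigl(t^{\alpha_1}ft^{-\alpha_1},\dots,t^{\alpha_k}ft^{-\alpha_k}\bigr)$ need not be nontrivial in the auxiliary group, for two reasons. Non-contraction in the two directions is witnessed by two possibly \emph{different} generators: there exist $i,j$ with $m_i\notin M_{[1,n]}$ and $m_j^{[n]}\notin M_{[0,n-1]}$, but a single $f$ may lie in the edge subgroup on one side, in which case its far-apart conjugates sit inside amalgamated subgroups and need not generate a free product at all. And even when they do, each conjugate only generates a copy of $\langle f\rangle$, which may be finite torsion, so the law can vanish at that particular tuple (the tuple is not a free basis of anything), killing $W_n$ in $G_n$ even though $G_n$ violates the law globally. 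The paper sidesteps both problems by combining the two witnesses into $a=m_i^{[-2]}m_j^{[n-1]}$ and $b=m_im_j^{[n+1]}$, proving via Britton's lemma/normal forms in the iterated amalgam that $a,b$ freely generate a free subgroup of $G_n$, and then evaluating the law at $ab,a^2b^2,\dots,a^kb^k$, which is a free basis of a rank-$k$ free subgroup; nontriviality of $w(ab,\dots,a^kb^k)$ in $G_n$ is then automatic, its length is $O(n)$, and combined with the inclusion of short relations into $G_n$ this puts an element of the relation range in $[2n+4,\,2k\ell(2n+4)]$. Until you supply substitutes for these two steps, the proposal is a plan rather than a proof.
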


\medskip

Finally we end this introduction with the following problem. While the class of finitely generated linear groups (that is, isomorphic to a subgroup of $\mathrm{GL}_n(K)$ for some $n$ and some field $K$) contains both finitely presented groups and densely related groups (e.g.\ the wreath product $\Z \wr \Z$), we do not know any other behavior of the relation range within this class of groups. 

\begin{que}\label{linear-question}
Is it true that every finitely generated linear group is either finitely presented or densely related ?
\end{que}

We point out that a positive answer to this question would imply the non-existence of infinitely presented linear lacunary hyperbolic groups, and the existence of such groups was asked by Olshanskii, Osin and Sapir in \cite{OOS}. We refer to \S\ref{subsec-cones} for details. 

\subsection*{Outline} This article is organized as follows. Section \ref{sec-rel-range} introduces the analogue of the relation range for graphs, and contains the proof of Theorem \ref{thm-intro-LS-inv}. In Section \ref{sec-rel-range-group} we establish preliminary results on the relation range and make the connection with asymptotic cones (Theorem \ref{ascone_intro}). Finally the study of the relation range for specific classes of groups is carried out in Section \ref{sec-onto-Z}, which contains the proofs of all other results stated in the introduction.

\subsection*{Acknowledgments} We are grateful to Goulnara Arzhantseva and Pierre de la Harpe for useful comments, to Romain Tessera for interesting discussions related this work, and to Henry Wilton for pointing out \cite{CG}. We also thank Mikhail Ershov for pointing out some inaccuracy in a preliminary version of this article, and the referee for useful suggestions. 

\section{Relation range of graphs} \label{sec-rel-range}

Following \cite{Bowditch-RR}, this section introduces the analogue of the relation range in the setting of connected graphs.


\subsection{Equivalence relation on subsets of $\R_+$}

Given two subsets $A,B$ of $\R_+$, we write $A \preccurlyeq B$ if there exists $c>0$ such that there is an element of $B$ in the interval $[c^{-1} a,c a]$ for every $a \in A$. The relation $A \sim B$ defined by $A \preccurlyeq B$ and $B \preccurlyeq A$ is an equivalence relation on the set of subsets of $\R_+$. Equivalently, we have $A \sim B$ if and only if $A,B$ are at finite multiplicative Hausdorff distance, i.e.\ $\log(A)$ and $\log(B)$ are at finite Hausdorff distance. The class of a subset $A$ will be denoted $\left[A\right]$. We leave as an exercise the verification that the operation $\left[A\right] \cup \left[B\right] = \left[A \cup B\right]$ is well defined, and that the relation defined by $\left[A\right] \subset \left[B\right]$ if there exist $A' \sim A$ and $B' \sim B$ such that $A' \subset B'$, is a partial order on the set of equivalence classes. 


\subsection{Definition for graphs}

Let $X$ be a graph. A path $\alpha$ of length $n \geq 0$ in $X$ is a sequence of vertices $x_0,\ldots,x_{n}$ such that $x_i$ and $x_{i+1}$ are adjacent for every $i=0,\ldots,n-1$. We say that $x_0$ and $x_{n}$ are respectively the inital point and the endpoint of $\alpha$. We say that $\alpha$ is a loop if $x_0 = x_{n}$. 

Assume now that $X$ is a connected graph, and choose a base point $x_0 \in X$. For every $n \geq 0$, we consider the subgroup $\pi_1^{(n)}(X,x_0)$ of $\pi_1(X,x_0)$ generated by loops of the form $p^{-1} \cdot \alpha \cdot p$, where $p$ is a path with initial point $x_0$, and $\alpha$ is a loop (based at the endpoint of $p$) of length at most $n$. The subgroup $\pi_1^{(n)}(X,x_0)$ is normal in $\pi_1(X,x_0)$, and clearly $\pi_1^{(n)}(X,x_0)$ is a subgroup of $\pi_1^{(m)}(X,x_0)$ for $m \geq n$. Note that $\pi_1(X,x_0)$ is the increasing union of the subgroups $\pi_1^{(n)}(X,x_0)$. We take the convention that $\pi_1^{(-1)}(X,x_0)$ is the trivial subgroup.

\begin{definition}
We denote by $\Phi(X)$ the set of integers $n \geq 0$ such that $\pi_1^{(n)}(X,x_0)$ properly contains $\pi_1^{(n-1)}(X,x_0)$. This does not depend on $x_0$.
\end{definition}

For simplicity we will use the same notation for $\Phi(X)$ and its $\sim$-class.


\begin{rem}
For every integer $k \geq 1$, the set $\Phi(X)$ is at finite multiplicative Hausdorff distance to the set of integers $n$ such that $\pi_1^{(kn)}(X,x_0)$ properly contains $\pi_1^{(n-1)}(X,x_0)$.
\end{rem}

\subsection{Quasi-isometry invariance} \label{subsec-inv-qi}

In this paragraph we study the behavior of $\Phi(X)$ under large scale Lipschitz retracts of connected graphs.

\medskip

A map $f: X\to Y$ between two metric spaces is $(C,C')$-LS-Lipschitz (where LS stands for Large-Scale) if \[ d(f(x),f(x')) \le \max(Cd(x,x'),C') \, \, \text{for all} \, \, x,x'\in X;\] and $f$ is \textbf{LS-Lipschitz} if it is $(C,C')$-LS-Lipschitz for some constants $C,C'$. The space $X$ is an \textbf{LS-Lipschitz retract} of $Y$ if there are LS-Lipschitz maps $X\to Y\to X$ whose composite map is at bounded distance to the identity map of $X$. 

\begin{thm}\label{thm-qi-inv}
Let $X,Y$ be connected graphs. Assume that $X$ is an LS-Lipschitz retract of $Y$. Then $[\Phi(X)]\subset[\Phi(Y)]$. In particular, if $X$ and $Y$ are quasi-isometric then $[\Phi(X)]=[\Phi(Y)]$. 
\end{thm}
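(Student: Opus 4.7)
The strategy is to functorialize $\Phi$: an LS-Lipschitz map $f\colon X\to Y$ should induce a homomorphism on fundamental groups that respects the filtration $\pi_1^{(\cdot)}$ up to a multiplicative constant. Concretely, if $f$ is $(C,C')$-LS-Lipschitz with $D=\max(C,C')$, then any two adjacent vertices $x,x'$ of $X$ have images $f(x),f(x')$ at distance at most $D$ in $Y$. The plan is to choose, for every pair of vertices of $Y$ at distance $\le D$, a connecting path of length $\le D$ (with $\gamma_{y,y'}=\gamma_{y',y}^{-1}$), and define $f_*$ on a path $x_0x_1\ldots x_n$ as the concatenation of the chosen paths from $f(x_{i-1})$ to $f(x_i)$. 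This defines a continuous map between the $1$-complexes $X,Y$, hence a well-defined homomorphism $f_*\colon\pi_1(X,x_0)\to\pi_1(Y,f(x_0))$; and a loop of length $\le n$ in $X$ is sent to a loop of length $\le Dn$ in $Y$, so $f_*(\pi_1^{(n)}(X,x_0))\subset\pi_1^{(Dn)}(Y,f(x_0))$.

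The next step is a bounded perturbation lemma: if $g_0,g_1\colon X\to Y$ are LS-Lipschitz with $d_Y(g_0(x),g_1(x))\le K$ for every $x$, and $\tau$ is a path in $Y$ of length $\le K$ from $g_0(x_0)$ to $g_1(x_0)$, then the loop $g_{0*}(\alpha)\cdot\tau\cdot g_{1*}(\alpha)^{-1}\cdot\tau^{-1}$ lies in $\pi_1^{(L)}(Y,g_0(x_0))$, where $L=2D+2K$. The argument is a rectangle decomposition: choose, for each vertex $x$ of $X$, a path $\sigma_x$ of length $\le K$ from $g_0(x)$ to $g_1(x)$. For each edge $(x_{i-1},x_i)$ of $\alpha$, the four sides $g_{0*}(x_{i-1}x_i)$, $\sigma_{x_i}$, $g_{1*}(x_{i-1}x_i)^{-1}$, $\sigma_{x_{i-1}}^{-1}$ form a loop of length $\le 2(D+K)$, based at $g_0(x_{i-1})$; conjugating by the prefix $g_{0*}(x_0\ldots x_{i-1})$ brings it to $g_0(x_0)$, and summing these telescopes to the global loop. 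A parallel rectangle argument shows that $r_*\circ f_*$ and $(r\circ f)_*$ agree modulo $\pi_1^{(2D^2)}$, since both produce paths in $X$ between the same vertices $r(f(x_{i-1}))$, $r(f(x_i))$ of length $\le D^2$.

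Applying these ingredients to an LS-Lipschitz retract $f\colon X\to Y$, $r\colon Y\to X$ with $d_X(r\circ f(x),x)\le K$: Lemma~B applied to $g_0=\mathrm{id}_X$ and $g_1=r\circ f$ gives, after conjugating by a path of length $\le K$ from $x_0$ to $r(f(x_0))$, that $(r\circ f)_*(\alpha)$ differs from $\alpha$ by an element of $\pi_1^{(L)}(X,x_0)$; combining with the second rectangle estimate yields a uniform constant $L''$ such that $r_*(f_*(\alpha))\equiv \alpha\pmod{\pi_1^{(L'')}(X,x_0)}$ (after basepoint conjugation). Now, given $n\in\Phi(X)$ with $n>L''$, pick a loop $\alpha$ of length $n$ at $x_0$ whose class lies in $\pi_1^{(n)}(X,x_0)\setminus\pi_1^{(n-1)}(X,x_0)$. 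Then $[f_*(\alpha)]\in\pi_1^{(Dn)}(Y,f(x_0))$; if $[f_*(\alpha)]\in\pi_1^{(m)}(Y,f(x_0))$ then $[r_*f_*(\alpha)]\in\pi_1^{(Dm)}(X,\cdot)$, so $[\alpha]\in\pi_1^{(\max(Dm,L''))}(X,x_0)$, forcing $Dm\ge n$, i.e.\ $m\ge n/D$. Hence some $m\in\Phi(Y)$ lies in $[n/D,Dn]$, proving $\Phi(X)\preccurlyeq\Phi(Y)$; the quasi-isometric case is symmetric. The principal technical hurdle is this bookkeeping of basepoints and the distinction between $r_*\circ f_*$ and $(r\circ f)_*$, which the two parallel rectangle arguments are designed to absorb into a single uniform constant.
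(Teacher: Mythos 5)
Your proof is correct in its essentials, but it takes a genuinely different route from the paper. The paper never works with edge-paths and the filtration $\pi_1^{(n)}$ directly: it passes to Rips complexes $R_c$, reformulates $\Phi$ (up to $\sim$) as the set $\Psi$ of scales $r$ at which $R_r\to R_{2r}$ fails to be $\pi_1$-injective, proves that a map at bounded distance from the identity induces homotopic maps $R_c\to R_{c+2k}$ (a straight-line homotopy inside the simplices), and then concludes by a diagram chase using a coarse-connectivity property (*) that holds automatically for connected graphs. Your argument stays entirely combinatorial: you build the induced homomorphism $f_*$ by choosing connecting paths for images of edges, observe that it carries $\pi_1^{(n)}$ into $\pi_1^{(Dn)}$, and replace the Rips-complex homotopy by the ladder/rectangle decomposition showing that two maps at uniformly bounded distance induce the same map modulo $\pi_1^{(L)}$ for a uniform $L$ (and likewise for $r_*\circ f_*$ versus $(r\circ f)_*$); the contradiction argument at the end is then the same in spirit as the paper's diagram chase. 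What each approach buys: yours is self-contained, avoids the topology of infinite simplicial complexes (inductive limit topology, continuity of the homotopy), and gives explicit constants; the paper's formulation via $R_r\to R_{2r}$ applies verbatim to general metric spaces satisfying (*), which is why the paper can phrase Lemma \ref{prop-qi-inv} beyond graphs and reuse it conceptually. Two small points you should make explicit, though neither is a gap: (i) when $n\in\Phi(X)$ you need a witness that is a conjugate of a \emph{single} loop of length exactly $n$, which holds because $\pi_1^{(n)}$ is generated by such conjugates, so if all of them lay in $\pi_1^{(n-1)}$ the two subgroups would coincide; (ii) the finitely many $n\in\Phi(X)$ with $n\le L''$ must be absorbed by enlarging the constant (assuming $\Phi(Y)\neq\emptyset$), a boundary case that the paper's own statement (``there exists $r_0$ such that for $r\ge r_0$...'') glosses over in exactly the same way.
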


Let us proceed to prove the theorem. It will follow from the more quantitative Lemma \ref{prop-qi-inv}.

If $X$ is a metric space and $c \geq 0$, let $R_c(X)$ be the Rips complex: this is the simplicial complex whose set of vertices is $X$ and there is a $n$-simplex between any $n+1$ points pairwise at distance $\le c$. (We only consider the Rips complex as a topological space, when endowed with the usual inductive limit topology.) 


If $f:X\to Y$ maps any two points at distance $\le c$ to points at distance $\le c'$, then it induces a map $\bar{f}:R_c(X)\to R_{c'}(Y)$, defined to be equal to $f$ on vertices and extended to be affine on simplices; then $\bar{f}$ is continuous and $f\mapsto\bar{f}$ is functorial. In particular: 

\begin{itemize}
\item if $f$ is $(C,C')$-LS-Lipschitz then for all $c\ge C'/C$, $f$ induces a map $\bar{f}:R_c(X)\to R_{cC}(Y)$;
\item if $f$ has distance $\le k$ to the identity, i.e.\ $d(f(x),x)\le k$ for all $x \in X$, then $d(f(x),f(x'))\le d(x,x')+2k$, so this defines $\bar{f}$ from $R_c(X)$ to $R_{c+2k}(X)$ for all $c \geq 0$.
\end{itemize}



\begin{lem}\label{distid}
If $f$ has distance $\le k$ to the identity then $\bar{f}:R_c(X)\to R_{c+2k}(X)$ is homotopic to $\bar{i}$, where $i$ is the identity of $X$. 
\end{lem}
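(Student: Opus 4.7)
The plan is to build an explicit straight-line homotopy $H:R_c(X)\times[0,1]\to R_{c+2k}(X)$ between $\bar{i}$ and $\bar{f}$, carried out affinely on each simplex of the Rips complex. The crucial geometric input is that for every simplex $\sigma=[x_0,\dots,x_n]$ of $R_c(X)$, the union of vertex sets $\{x_0,\dots,x_n\}\cup\{f(x_0),\dots,f(x_n)\}$ spans a simplex of $R_{c+2k}(X)$. Indeed, by the triangle inequality combined with the hypothesis $d(f(x),x)\le k$, one has $d(x_i,x_j)\le c$, $d(x_i,f(x_j))\le d(x_i,x_j)+d(x_j,f(x_j))\le c+k$, and $d(f(x_i),f(x_j))\le d(f(x_i),x_i)+d(x_i,x_j)+d(x_j,f(x_j))\le c+2k$, so every pairwise distance in the union is at most $c+2k$.

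Given this, for a point $p\in\sigma$ with barycentric coordinates $p=\sum_{i=0}^n t_i x_i$ (with $t_i\ge 0$ and $\sum t_i=1$) and $s\in[0,1]$, I would define
\[
H(p,s)\;=\;(1-s)\sum_{i=0}^n t_i x_i\;+\;s\sum_{i=0}^n t_i f(x_i),
\]
interpreted as a point of the larger simplex $[x_0,\dots,x_n,f(x_0),\dots,f(x_n)]$ inside $R_{c+2k}(X)$. By construction $H(\cdot,0)=\bar{i}$ and $H(\cdot,1)=\bar{f}$ on $\sigma$, since $\bar{f}$ is the affine extension of $f$.

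The main thing to check is that the formula is consistent on the whole complex and continuous. Compatibility on faces is immediate: if $\sigma'\subset\sigma$ corresponds to setting some $t_i=0$, the expression for $H$ collapses to the analogous formula on $\sigma'$, which a priori lands in $[x_j,f(x_j)]_{j\in J}$ with $J$ the index set of $\sigma'$; this is a face of the enlarged simplex used for $\sigma$, so the two prescriptions agree. The map is continuous on each $\sigma\times[0,1]$ (it is affine), and coherent on intersections, so by the universal property of the inductive limit topology on the Rips complex it is continuous globally. This gives the required homotopy.

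I do not anticipate a serious obstacle; the only delicate point is the verification that the enlarged vertex set really spans a simplex in $R_{c+2k}(X)$, which is exactly where the $2k$ inflation of the parameter (rather than just $k$) is used, and the book-keeping of barycentric coordinates across faces so that $H$ glues into a continuous map on the whole Rips complex.
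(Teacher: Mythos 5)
Your proof is correct and follows essentially the same route as the paper: the straight-line (affine) homotopy $(p,s)\mapsto (1-s)\sum t_i x_i + s\sum t_i f(x_i)$ inside $R_{c+2k}(X)$, justified by exactly the same triangle-inequality estimates $d(x_i,f(x_j))\le c+k$ and $d(f(x_i),f(x_j))\le c+2k$, with face compatibility and continuity handled via the inductive limit topology. The only cosmetic difference is that the paper explicitly notes the possible coincidences $x_i=f(x_j)$ (so the "enlarged simplex" may degenerate), a point you gloss over but which causes no difficulty.
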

\begin{proof}
We write coordinates in an $(n+1)$ simplex of vertices $(x_0,\dots,x_n)$ as $\sum^*_i \lambda_ix_i$, where $\lambda_i\in [0,1]$ and $\sum\lambda_i=1$. Note that when two of the $x_i$ are equal this is meaningful and lies in a smaller simplex. For $t \in [0,1]$, we define $\gamma_t:\sum^*_i \lambda_ix_i\mapsto \sum^*_i(1-t)\lambda_ix_i + \sum^*_it\lambda_if(x_i)$. We need to check this is meaningful. First, if $(x_i)$ forms a simplex, then $d(x_i,f(x_j))\le d(x_i,x_j)+k$ and $d(f(x_i),f(x_j))\le d(x_i,x_j)+2k$, hence the $x_i$ and $f(x_j)$ together form a simplex in $R_{c+2k}$ (possibly there are some equalities $x_i=f(x_j)$). We also need to check that the definition of $\gamma_t$ matches between different simplices, and this is the case.

To check continuity, we first observe that by definition of the inductive limit topology, it is enough to check continuity on finite subcomplexes (i.e.\ on $F\times [0,1]$ where $F$ ranges over finite subcomplexes of $R_c(X)$). Since such a finite subcomplex maps into a finite subcomplex, this is equivalent to checking the continuity when the simplices are endowed with some standard metric, for instance the Euclidean metric with edges of length 1.

The formula being Lipschitz in all variables $(\lambda_i)$ and $t$, the map $(t,x)\mapsto \gamma_t(x)$ is continuous and defines a homotopy between $\bar{f}$ and $\bar{i}$ (note that the $x_i$ lying on a discrete set, there is no continuity issue at this level).
\end{proof}

For two path-connected metric spaces $Z_1,Z_2$, a continuous map $f: Z_1 \rightarrow Z_2$ is said to be $\pi_1$-injective (respectively $\pi_1$-surjective) if the induced map $f_*$ at the level of fundamental groups is injective (respectively surjective).

Now we fix a base-point on $X$ and we need the following condition on $X$: \begin{center} (*) \textit{there exists $c_0$ such that for all $c\ge c_0$, the inclusion map $R_{c_0}(X)\to R_c(X)$ is $\pi_1$-surjective.} \end{center} This is ensured by some coarse geodesic assumptions such as: for every $c\ge c_0$, for any two points at distance $\le c+1$ there is a third point at distance $\le c$ from both. This in particular holds with $c_0=1$ in a combinatorial connected graph.

We fix another metric space $Y$ that also satisfies (*) (with the same $c_0$). We also fix inverse quasi-isometries $f,g$ between $X$ and $Y$. We can assume (increasing $c_0$ if necessary) that $f:X\to Y$ and $g:Y\to X$ are $(C,Cc_0)$-LS-Lipschitz and $(C',c_0)$-LS-Lipschitz for some $C,C'\ge 1$. We assume all these maps are basepoint-preserving and that $g\circ f$ has distance $\le c_0/2$ to the identity of $X$. (We use nothing on $f\circ g$, hence we just suppose that $X$ is a QI-retract of $Y$.)

For a metric space $Z$, we denote by $\Psi(Z)$ is the set of $r$ such that $R_r(Z)\to R_{2r}(Z)$ is not $\pi_1$-injective. Note that when $Z$ is a combinatorial connected graph, we have $\Psi(Z) \sim \Phi(Z)$, so that the following lemma implies Theorem \ref{thm-qi-inv}.

\begin{lem} \label{prop-qi-inv}
There exists $r_0>0$ and $s\ge 1$ such that if $r\ge r_0$ and $R_{r}(X)\to R_{2r}(X)$ is not $\pi_1$-injective, then $R_{Cs^{-1}r}(Y)\to R_{2Cr}(Y)$ is not $\pi_1$-injective.

In particular, up to finite Hausdorff multiplicative distance, we have the inclusion of $\Psi(X)$ into $\Psi(Y)$. 
\end{lem}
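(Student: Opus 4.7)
The plan is to produce, from a nontrivial element of $\ker(\pi_1(R_r(X))\to\pi_1(R_{2r}(X)))$, a nontrivial element of $\ker(\pi_1(R_{r/C'}(Y))\to\pi_1(R_{2Cr}(Y)))$, which will establish the lemma with $s=CC'$. The core difficulty is that $\bar f$ expands distances by $C$, so a loop at scale $r$ in $X$ naively produces a loop only at scale $Cr$ in $Y$, whereas we need one at the smaller scale $r/C'$; to get around this, I would first descend to a loop at the very small scale $c_0$ using property (*), push that small loop into $Y$, and then use Lemma \ref{distid} together with the retraction $g\circ f\simeq \mathrm{id}_X$ to transfer nontriviality back.

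In detail, property (*) applied to $X$ yields an element $\gamma_0\in\pi_1(R_{c_0}(X))$ whose image $\gamma$ in $\pi_1(R_r(X))$ is nontrivial and dies in $\pi_1(R_{2r}(X))$. Since $f$ is $(C,Cc_0)$-LS-Lipschitz, set $\delta_0=\bar f_*(\gamma_0)\in\pi_1(R_{Cc_0}(Y))$ and let $\delta$ be its image in $\pi_1(R_{r/C'}(Y))$; this inclusion is meaningful provided $Cc_0\le r/C'$, which I fold into the definition of $r_0$. By functoriality of $\bar f$ applied to the null-homotopy of $\gamma_0$ in $R_{2r}(X)$, the class $\delta$ vanishes in $\pi_1(R_{2Cr}(Y))$.

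The key step is to prove $\delta\ne 1$ in $\pi_1(R_{r/C'}(Y))$. Assuming the contrary, apply $\bar g:R_{r/C'}(Y)\to R_r(X)$ (defined because $g$ is $(C',c_0)$-LS-Lipschitz and $r\ge C'c_0$) to obtain $(\overline{g\circ f})_*(\gamma_0)=1$ in $\pi_1(R_r(X))$. But Lemma \ref{distid}, applied to $g\circ f$ (at distance $\le c_0/2$ from $\mathrm{id}_X$), gives a homotopy in $R_{2c_0}(X)$ between $\overline{g\circ f}$ and the inclusion $R_{c_0}(X)\hookrightarrow R_{2c_0}(X)$; including further into $R_r(X)$ (valid as $r\ge 2c_0$) we get $(\overline{g\circ f})_*(\gamma_0)=\gamma\ne 1$, a contradiction. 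This establishes the lemma with $s=CC'$ and $r_0=\max(CC'c_0,2c_0)$. The final assertion about $\Psi$ then follows by selecting, for each $r\in\Psi(X)$ with $r\ge r_0$, an intermediate doubling scale in $[r/C',2Cr]$ at which a witnessing loop first dies, producing a point of $\Psi(Y)$ multiplicatively comparable to $r$.
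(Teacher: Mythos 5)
Your proof is correct and follows essentially the same route as the paper: push a loop witnessing the failure of $\pi_1$-injectivity (lifted to a small scale via property (*)) into $Y$ by $\bar f$, observe it dies in $R_{2Cr}(Y)$, and prove its nontriviality at the small $Y$-scale by returning via $\bar g$ and invoking Lemma \ref{distid} to compare $\overline{g\circ f}$ with the inclusion. The only differences are organizational --- you argue directly with the witness $\delta$ (contradiction only for its nontriviality) and apply Lemma \ref{distid} at scale $c_0$ rather than $qr$, which lets you take $s=CC'$ and avoids the paper's extra condition $q\le 1/2$.
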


\begin{proof}
Fix a positive number $q$ satisfying $q\le \min((C'C)^{-1},1/2)$.
Fix $r>q^{-1}c_0$ such that $R_r(X)\to R_{2r}(X)$ is not $\pi_1$-injective. 

Assume by contradiction that $R_{Cqr}(Y)\to R_{2Cr}(Y)$ is $\pi_1$-injective. Then we have the following diagram of continuous maps, where the horizontal arrows are standard inclusions and the top square commutes (for each vertical arrow, a simple inequality shows that it is well-defined).

$$\begin{CD}
R_{qr}(X) @> >> R_{2r}(X)\\
@VV\bar{f}V @VV\bar{f}V\\
R_{Cqr}(Y) @> >> R_{2Cr}(Y)\\
@VV\bar{g}V @.\\
R_{CC'qr}(X) @. @.
\end{CD}$$


The upper square induces a commutative square of maps between the fundamental groups. Pick a non-trivial element $\gamma \in \pi_1(R_{r}(X))$ having a trivial image in $\pi_1(R_{2r}(X))$. The assumption (*) implies that $\gamma$ can be chosen to lie in $\pi_1(R_{qr}(X))$. 
 
 Composing (right and then down), we see that $\gamma$ has a trivial image in $\pi_1(R_{Cqr}(Y))$. The injectivity of $R_{Cqr}(Y)\to R_{2Cr}(Y)$ then implies that $\bar{f}(\gamma)\in\pi_1(R_{Cqr}(Y))$ is trivial. Again composing by $g$, it follows that $\bar{g}\circ \bar{f}$ maps $\gamma$ to a trivial element in $\pi_1(R_{C'Cqr}(X))$. On the other hand, by Lemma \ref{distid}, $\bar{g}\circ \bar{f}$, as a map from $R_{qr}(X)$ to $R_{\max(C'Cqr,qr+c_0)}(X)$, is homotopic to the identity. Hence the identity maps $\gamma$ to a trivial element in $\pi_1(R_{\max(C'Cqr,qr+c_0)}(X))$. The upper bound on $q$ implies that $\max(C'Cqr,qr+c_0)\le r$. Hence we obtain that the image of $\gamma$ in $\pi_1(R_{r}(X))$ is trivial, which is a contradiction.
\end{proof}


\section{Relation range of a group} \label{sec-rel-range-group}

\subsection{Definition}

Let $G$ be a group, and $S$ a generating subset of $G$, so that we have a short exact sequence  \[ 1 \longrightarrow N \longrightarrow F_S \longrightarrow G \longrightarrow 1, \] where $F_S$ is the free group on $S$. For every $n \geq 0$, we let $N_n$ be the normal subgroup of $F_S$ generated as such by elements of $N$ or word length at most $n$. By definition $(N_n)$ is an increasing sequence of normal subgroups of $F_S$ ascending to the subgroup $N$. 

\begin{definition}
We denote by $\mathcal{R}_S(G)$ the set of integers $n \geq 0$ such that $N_n$ properly contains $N_{n-1}$. We call $\mathcal{R}_S(G)$ the \textbf{relation range} of the group $G$ with respect to the generating subset $S$.
\end{definition}



\begin{definition} \label{def-dens-rel}
A group $G$ is said to be:
\begin{enumerate}
	\item \textbf{boundedly presented} over $S$ if $\mathcal{R}_S(G)$ is finite;
	\item \textbf{densely related} over $S$ if $\mathcal{R}_S(G) \sim \N$. Otherwise $G$ is said to be \textbf{lacunary presented} over $S$.
\end{enumerate}
\end{definition}

Recall that the Cayley graph $\mathrm{Cay}(G,S)$ of $G$ with respect to $S$ is the graph with $G$ as set of vertices, and $(x,y)$ is an edge if there is $s \in S^{\pm 1}$ such that $y=xs$. Since loops in $\mathrm{Cay}(G,S)$ correspond to relations in $G$, we have the following.

\begin{lem}
$\mathcal{R}_S(G) = \Phi(\mathrm{Cay}(G,S))$.
\end{lem}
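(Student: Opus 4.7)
The plan is to identify the two invariants via the standard correspondence between loops in the Cayley graph and relations in $G$. Pick the basepoint $x_0=1_G\in\mathrm{Cay}(G,S)$ and recall the identification of $\pi_1(\mathrm{Cay}(G,S),1_G)$ with $N=\ker(F_S\twoheadrightarrow G)$: since the Cayley graph of $G$ is the quotient by $N$ of the Cayley tree of $F_S$ (which is simply connected), the deck transformation group of this covering is precisely $N$. Concretely, a combinatorial loop $\gamma$ in the Cayley graph based at $1_G$ is determined by the word $r\in F_S$ obtained by reading off the labels of successive edges, and $\gamma$ is a loop iff $\bar r=1$ in $G$, i.e.\ $r\in N$; moreover, the combinatorial length of $\gamma$ equals the word length of $r$ with respect to $S$.

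Next, I would check that under this identification the filtration $\pi_1^{(n)}(\mathrm{Cay}(G,S),1_G)$ matches the filtration $N_n$. The generators of $\pi_1^{(n)}$ are, by definition, loops of the form $p^{-1}\cdot\alpha\cdot p$, where $p$ is a path from $1_G$ to some vertex $v$ and $\alpha$ is a loop at $v$ of length $\le n$. If $w\in F_S$ is the word spelled by $p$ (so $\bar w=v$) and $r\in F_S$ is the word spelled by $\alpha$, then reading labels around the concatenated loop gives the word $w^{-1}rw$, with $r\in N$ and $|r|\le n$. Conversely, every element of $\{w^{-1}rw:w\in F_S,\,r\in N,\,|r|\le n\}$ arises in this way by letting $p$ be the path from $1_G$ spelling $w$ and $\alpha$ be the loop at $\bar w$ spelling $r$. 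Since the set $\{r\in N:|r|\le n\}$ is stable under inversion, the subgroup of $N$ generated by these conjugates is exactly the normal closure $N_n$ of $\{r\in N:|r|\le n\}$ in $F_S$, and it is a subgroup of $N$ because $N$ itself is normal. Hence $\pi_1^{(n)}(\mathrm{Cay}(G,S),1_G)$ corresponds to $N_n$ for every $n\ge 0$ (with $n=-1$ giving the trivial subgroup on both sides).

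Finally, the equality of the filtrations gives
\[
\mathcal{R}_S(G)=\{n\ge 0:N_n\supsetneq N_{n-1}\}=\{n\ge 0:\pi_1^{(n)}\supsetneq\pi_1^{(n-1)}\}=\Phi(\mathrm{Cay}(G,S)),
\]
which is the desired statement. There is no real obstacle here; the only delicate point is being careful with conventions (orientation of paths, left vs.\ right conjugation, and the fact that the generating set $\{r\in N:|r|\le n\}$ is symmetric), so that the normal closure in $F_S$ really matches the subgroup of $\pi_1$ generated by the prescribed conjugates. Once this bookkeeping is done, both descriptions of the filtration coincide termwise, and the equality of the jump sets is immediate.
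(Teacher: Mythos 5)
Your proof is correct and is essentially the paper's own argument: the paper dismisses the lemma with the one-line observation that loops in $\mathrm{Cay}(G,S)$ correspond to relations in $G$, and your write-up simply makes that dictionary explicit (words read along paths, conjugated short loops matching the normal closure $N_n$, hence termwise equality of the two filtrations and of their jump sets). No gap; the only caveats are degenerate bookkeeping issues at bounded scale (e.g.\ generators mapping to the identity or to mutually inverse elements), which the paper ignores as well and which do not affect anything up to the equivalence $\sim$.
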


Theorem \ref{thm-qi-inv} has the following consequence.

\begin{cor} \label{cor-qi-inv}
Let $G$ be a compactly generated locally compact group, and $S$ a compact generating subset. Then the $\sim$-class of $\mathcal{R}_S(G)$ does not depend on $S$, and is actually a quasi-isometry invariant of the group $G$.

More generally $[\mathcal{R}(H)]\subset [\mathcal{R}(G)]$ whenever $H$ is a large-scale Lipschitz retract of $G$ (e.g.\ a group retract).
\end{cor}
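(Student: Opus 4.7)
The plan is to reduce the statement to Theorem~\ref{thm-qi-inv} via the identity $\mathcal{R}_S(G) = \Phi(\mathrm{Cay}(G,S))$ from the preceding lemma. Since $S$ generates $G$, the Cayley graph $\mathrm{Cay}(G,S)$ is a combinatorial connected graph, and its graph metric restricted to vertices is exactly the word metric $d_S$ on $G$.

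I would first prove the more general retract assertion. Assume $(H,d_{S_H})$ is a large-scale Lipschitz retract of $(G,d_{S_G})$, via LS-Lipschitz maps $\iota\colon H\to G$ and $r\colon G\to H$ with $r\circ\iota$ at bounded distance from the identity of $H$. Because the word metrics agree with the graph metrics on the vertex sets, the very same maps $\iota$ and $r$ realize $\mathrm{Cay}(H,S_H)$ as an LS-Lipschitz retract of $\mathrm{Cay}(G,S_G)$ in the sense of Theorem~\ref{thm-qi-inv}. Applying that theorem yields $[\mathcal{R}_{S_H}(H)]=[\Phi(\mathrm{Cay}(H,S_H))]\subset[\Phi(\mathrm{Cay}(G,S_G))]=[\mathcal{R}_{S_G}(G)]$. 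The case of a (topological) group retract — a closed subgroup $H\le G$ together with a continuous retraction homomorphism $r\colon G\to H$ — reduces to this, since one may always choose compatible compact generating sets $S_H\subset S_G$ making the inclusion $1$-Lipschitz, and then $r$ is automatically Lipschitz because it sends $S_G$ into a bounded subset of $H$.

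The main assertion follows as a special case. For the independence of $S$, any two compact generating sets $S,S'$ of $G$ yield bilipschitz equivalent word metrics by a standard compactness argument (each compact generating set is contained in a bounded ball for the word metric defined by the other), so the identity map is a mutual LS-Lipschitz retract and $[\mathcal{R}_S(G)]=[\mathcal{R}_{S'}(G)]$. Quasi-isometry invariance is the symmetric case when $H$ and $G$ are mutual retracts via a pair of quasi-inverse quasi-isometries. There is no genuine obstacle once the dictionary between groups-with-word-metrics and Cayley graphs is set up, as the topological content has been absorbed by Theorem~\ref{thm-qi-inv}; the only minor verification concerns the identification of the word metric on $G$ with the combinatorial graph metric of $\mathrm{Cay}(G,S)$ when $S$ is compact but not discrete.
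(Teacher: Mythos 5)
Your proposal is correct and follows exactly the paper's (largely implicit) argument: the identification $\mathcal{R}_S(G)=\Phi(\mathrm{Cay}(G,S))$ from the preceding lemma, combined with Theorem~\ref{thm-qi-inv} applied to Cayley graphs, with the standard facts that compact generating sets give quasi-isometric word metrics and that a (continuous) group retract is an LS-Lipschitz retract. No substantive difference from the paper's route.
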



Recall that a \textbf{group retract} of $G$ is a subgroup $H \leq G$ such that there exists a homomorphism $G \to H$ whose restriction to $H$ is the identity.
\begin{definition}
If $G$ is a compactly generated locally compact group, we call $\mathcal{R}(G) = \left[ \mathcal{R}_S(G) \right]$ the \textbf{relation range} of $G$, which does not depend on the choice of compact generating subset $S$. We say that $G$ is \textbf{densely related} if $\mathcal{R}(G) \sim \N$, and \textbf{lacunary presented} otherwise.
\end{definition}

\begin{rem} \label{rem-diff-CT}
Following \cite{CT-Abels}, if $G$ is compactly generated locally compact group, let $\nu(G)$ be the set of non-principal ultrafilters on $\N$ such that the asymptotic cone $\mathrm{Cone}^{\omega}(G,(1/n))$ is simply connected. This is a quasi-isometry invariant, and its complement $\nu(G)^c$ is related in spirit to the relation range. However, it behaves differently: there exist finitely presented groups for which $\nu(G)^c$ can be non-empty, or even be the set of all non-principal ultrafilters. For instance, any lattice $\Gamma$ in the 3-dimensional Lie group $\mathrm{SOL}$ has $\nu(\Gamma)$ empty (that is, $\nu(\Gamma)^c$ is the set of all ultrafilters), in spite of being finitely presented. Hence if we consider a family of groups $(\Lambda_i)$ achieving all possible relation ranges (up to equivalence), then so does the family $(\Gamma\times\Lambda_i)$, but all these groups have $\nu$ empty and thus $\nu$ does not distinguish these groups.
\end{rem}

\begin{rem}[On minimal presentations and relation range] \label{rem-min-RR}
Let $G$ be a finitely generated group with a minimal presentation $G =\langle S\mid R\rangle$, meaning that no relator belongs to the normal subgroup generated by other relators. Then in general there is no relation between the relation range $\mathcal{R}_S(G)$ and the set $\mathcal{L}_R = \{|r|_S: r \in R \}$, in the sense that both inclusions $\left[ \mathcal{R}_S(G)\right] \subset \left[ \mathcal{L}_R\right]$ and $ \left[ \mathcal{L}_R\right] \subset \left[ \mathcal{R}_S(G)\right]$ may fail:
\begin{enumerate}
	\item Let $H$ be a finitely generated densely related group having a minimal presentation $H=\langle S\mid (r_n) \rangle$, and form the group $G = H \ast \Z$. Since $H$ is a retract of $G$, the group $G$ remains densely related by Corollary \ref{cor-qi-inv}. On the other hand $G$ admits the presentation $G = \langle S,a \mid (a^{u_n} r_n a^{-u_n}) \rangle$ for arbitrary $(u_n)$, and this presentation remains minimal. In particular if $(u_n)$ grows very fast, we have a densely related group with a minimal presentation such that $\mathcal{L}_R$ is not $\sim$-equivalent to $\N$. 
	\item Consider the partial presentation of the lamplighter group \[H=\left\langle t,x \, \mid \, [t^{(2n)!}xt^{-(2n)!},x]=1 \, \, \text{for all} \, \, n \geq 1\right\rangle.\] Again form the group $G = H \ast \Z$, whose relation range is equivalent to $\left\{4 (2n)! +4 \right\} \sim \left\{(2n)!\right\}$ (Lemma \ref{lem-graph-prod}). Denoting $r_n = [t^{(2n)!}xt^{-(2n)!},x]$, consider the presentation $G = \langle t,x,a \mid a^{u_n} r_n a^{-u_n} = 1\rangle$, where $u_n$ is chosen so that the length of $a^{u_n} r_n a^{-u_n}$ is equal to $(2n+3)!$ for all $n \geq 1$. This presentation remains minimal, but the $\sim$-class of $\mathcal{L}_R$ is not included inside the relation range of $G$.
\end{enumerate}
\end{rem}

\subsection{Relation range of quotients} \label{subsec-ext}

If $N$ is a normal subgroup of a group $G$ and $S$ generates $G$, we denote by $\mathcal{X}_S(N,G)$ the set of integers $n$ such that there is an element of $N$ of length $n$ outside the normal subgroup of $G$ normally generated by elements of $N$ of length at most $n-1$. 

\begin{prop} \label{prop-bounded-pres-cover}
Let $1 \rightarrow N \rightarrow G \rightarrow Q \rightarrow 1$ be a short exact sequence of groups. Assume that $S$ is a generating subset of $G$ such that $G$ is boundedly presented over $S$. Then $\mathcal{R}_S(Q) = \mathcal{X}_S(N,G)$.
\end{prop}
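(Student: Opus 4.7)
The plan is to connect the sets $\mathcal{R}_S(Q)$ and $\mathcal{X}_S(N,G)$ through the natural quotient $\pi : F_S \twoheadrightarrow G$, whose kernel we denote $N_G$. Set $M = \ker(F_S \twoheadrightarrow Q) = \pi^{-1}(N)$, and write $M_n$ for the normal subgroup of $F_S$ generated by elements of $M$ of length $\le n$ (so that $\mathcal{R}_S(Q)$ records the jumps of the chain $(M_n)$) and $\tilde N_n$ for the normal subgroup of $G$ generated by elements of $N$ of $S$-length $\le n$ (so that $\mathcal{X}_S(N,G)$ records the jumps of $(\tilde N_n)$). The assumption that $G$ is boundedly presented over $S$ gives a constant $C$ such that $N_G$ is normally generated in $F_S$ by elements of length $\le C$, and in particular $N_G \subseteq M_n$ for every $n \ge C$.

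The technical heart is the identity $\pi(M_n) = \tilde N_n$, valid for all $n \ge 0$. To establish it, first verify the set-theoretic equality
\[
\pi(\{w \in M : |w| \le n\}) = \{g \in N : |g|_S \le n\}:
\]
the inclusion $\subseteq$ follows from $|\pi(w)|_S \le |w|$, and for $\supseteq$ a shortest word $w$ representing $g$ has length $|g|_S \le n$ and lies in $M$ since $\pi(w) = g \in N$. Applying the standard fact that surjective homomorphisms commute with normal closures, one deduces $\pi(M_n) = \tilde N_n$. Combining this with $N_G \subseteq M_n$ for $n \ge C$ yields $M_n = \pi^{-1}(\pi(M_n)) = \pi^{-1}(\tilde N_n)$, and hence the equivalence $M_n \supsetneq M_{n-1} \iff \tilde N_n \supsetneq \tilde N_{n-1}$ for every $n \ge C+1$. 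The containment $\mathcal{X}_S(N,G) \subseteq \mathcal{R}_S(Q)$ holds moreover at every $n$, since a witness $g \in N$ of length $n$ outside $\tilde N_{n-1}$ lifts to $w \in M$ of length $n$ which cannot lie in $M_{n-1}$ (otherwise $g = \pi(w) \in \pi(M_{n-1}) = \tilde N_{n-1}$). Thus $\mathcal{R}_S(Q)$ and $\mathcal{X}_S(N,G)$ coincide above $C$ and one direction of inclusion holds everywhere, giving the proposition up to the finite initial discrepancy absorbed in the $\sim$-equivalence used for relation ranges.

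The main obstacle is proving $\pi(M_n) = \tilde N_n$ and then using it to promote an equality of $\pi$-images in $G$ back to an equality of subgroups in $F_S$; it is precisely here that the bounded presentability hypothesis plays its role, since it is what forces $M_n$ to be $\pi$-saturated as soon as $n \ge C$. The rest amounts to routine bookkeeping between lifts and projections of normal generators.
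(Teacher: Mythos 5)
Your proof is correct and, at its core, is the same as the paper's: the inclusion $\mathcal{X}_S(N,G)\subseteq\mathcal{R}_S(Q)$ is obtained by lifting a witness to a geodesic word (this is Lemma \ref{lem-gen-ext}), and the reverse direction uses bounded presentability in the identical way, namely to force $\ker(F_S\to G)$ into your $M_{n-1}$ once $n-1\ge C$ (this is the content of Lemma \ref{lem-gen-ext-boundedly}). The only real difference is packaging: instead of pushing and lifting explicit products of conjugates, you transfer the whole chain at once via the identity $\pi(M_n)=\tilde N_n$ (surjections commute with normal closures) together with the saturation $M_n=\pi^{-1}(\tilde N_n)$ for $n\ge C$, which is a clean, more structural way of organizing the same computation. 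Note that you only obtain the stated equality for $n\ge C+1$, plus one inclusion everywhere, and you appeal to the $\sim$-equivalence for the initial segment; this is not a loss relative to the paper, whose proof of Lemma \ref{lem-gen-ext-boundedly} is likewise restricted to $n\ge r+1$, and indeed the literal set equality can fail in small degrees (for instance $N=1$, $G=Q=\Z/3\Z$, $S$ a single generator gives $\mathcal{R}_S(Q)=\{3\}$ while $\mathcal{X}_S(N,G)=\emptyset$), so the honest conclusion is equality above the presentation constant, which is exactly what is used in Corollary \ref{cor-central-ext-fp}.
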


Before giving the proof, let us derive the following consequence: 

\begin{cor} \label{cor-central-ext-fp}
Let $1 \rightarrow N \rightarrow G \rightarrow Q \rightarrow 1$ be a short exact sequence of groups. If $G$ is finitely presented, then $\mathcal{R}(Q) \sim \mathcal{X}_S(N,G)$.
\end{cor}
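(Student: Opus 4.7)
The plan is to reduce directly to Proposition \ref{prop-bounded-pres-cover}. Since $G$ is finitely presented, it admits a finite presentation, which means that for \emph{any} finite generating subset $S$ of $G$, the kernel of $F_S \twoheadrightarrow G$ is normally generated by finitely many elements, hence by elements of bounded word length. Equivalently, $\mathcal{R}_S(G)$ is finite, so that $G$ is boundedly presented over $S$ in the sense of Definition \ref{def-dens-rel}. (This standard invariance of finite presentability under change of finite generating set follows for instance from Tietze transformations applied between two presentations.) Hence the hypothesis of Proposition \ref{prop-bounded-pres-cover} is met for every finite generating subset $S$ of $G$.

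Now fix such an $S$, and let $\bar{S}$ denote its image in $Q$ under the projection $\pi: G \twoheadrightarrow Q$; then $\bar{S}$ is a finite generating subset of $Q$. Proposition \ref{prop-bounded-pres-cover} yields the equality of sets $\mathcal{R}_{\bar{S}}(Q) = \mathcal{X}_S(N,G)$ (the proposition implicitly identifies $S$ with its image in $Q$ via $\pi$ when forming $\mathcal{R}_S(Q)$). On the other hand, by Corollary \ref{cor-qi-inv}, the $\sim$-equivalence class $\mathcal{R}(Q)$ is independent of the choice of compact generating subset of $Q$, and is represented by $\mathcal{R}_{\bar{S}}(Q)$. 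Combining the two gives $\mathcal{R}(Q) \sim \mathcal{R}_{\bar{S}}(Q) = \mathcal{X}_S(N,G)$, as claimed.

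There is essentially no technical obstacle here: all the content is carried by Proposition \ref{prop-bounded-pres-cover} and the quasi-isometry invariance of the relation range from Corollary \ref{cor-qi-inv}. The only point worth flagging is the minor verification that finite presentability of $G$ entails bounded presentability over every finite generating subset (so that Proposition \ref{prop-bounded-pres-cover} can legitimately be applied to an $S$ chosen after the hypothesis, namely a lift to $G$ of whichever generating subset of $Q$ one wants to work with).
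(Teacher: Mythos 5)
Your proposal is correct and matches the paper's (implicit) derivation: the corollary is obtained exactly by noting that finite presentability makes $G$ boundedly presented over any finite generating subset $S$, applying Proposition \ref{prop-bounded-pres-cover} to get $\mathcal{R}_S(Q) = \mathcal{X}_S(N,G)$, and invoking the independence of the relation range on the generating subset (Corollary \ref{cor-qi-inv}). No further comment is needed.
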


Proposition \ref{prop-bounded-pres-cover} will follow from Lemmas \ref{lem-gen-ext} and \ref{lem-gen-ext-boundedly}.


\begin{lem} \label{lem-gen-ext}
Let $1 \rightarrow N \rightarrow G \rightarrow Q \rightarrow 1$ be a short exact sequence of groups, and let $S$ be a generating subset of $G$. Then $\mathcal{X}_S(N,G) \subset \mathcal{R}_S(Q)$.
\end{lem}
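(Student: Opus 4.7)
The plan is to unwind the definitions and move between the two short exact sequences $1\to N\to G\to Q\to 1$ and $1\to K\to F_S\to Q\to 1$, where $K$ is the kernel of the composite $F_S\twoheadrightarrow G\twoheadrightarrow Q$. By definition $n\in\mathcal{R}_S(Q)$ exactly when $K_n\supsetneq K_{n-1}$, where $K_n$ is the normal closure in $F_S$ of the set of elements of $K$ of $F_S$-length at most $n$. So given a witness $g\in N$ for $n\in\mathcal{X}_S(N,G)$, I would produce a lift in $K$ of $F_S$-length $n$ and argue it does not lie in $K_{n-1}$.

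Concretely, let $g\in N$ with $|g|_S=n$ realize the defining property: $g$ does not belong to the normal subgroup of $G$ generated by the elements of $N$ of $S$-length at most $n-1$. Pick a geodesic word $w\in F_S$ representing $g$, so $|w|_{F_S}=n$. Since $g\in N$ is sent to $1$ in $Q$, we have $w\in K$, hence $w\in K_n$.

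The heart of the argument is then to verify $w\notin K_{n-1}$. Assuming the contrary, $w$ would be a product of $F_S$-conjugates of elements $w_1,\dots,w_k\in K$ with $|w_i|_{F_S}\le n-1$. Projecting through $\pi:F_S\twoheadrightarrow G$, and using $\pi(K)=N$, the image $g=\pi(w)$ would then be a product of $G$-conjugates of the elements $\pi(w_i)\in N$. The key length estimate is that the $S$-length of $\pi(w_i)$ in $G$ is bounded above by the $F_S$-length of $w_i$, so each $\pi(w_i)$ has $S$-length at most $n-1$. This contradicts the choice of $g$, so $w\notin K_{n-1}$, and therefore $n\in\mathcal{R}_S(Q)$.

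The argument is essentially bookkeeping, so there is no serious obstacle. The only subtlety worth flagging is that one must use the length inequality $|\pi(w_i)|_S\le |w_i|_{F_S}$, which goes in the direction one needs; had it gone the other way, the contradiction would not close. Note also that the reverse inclusion need not hold in general (Proposition \ref{prop-bounded-pres-cover} will combine this with a complementary estimate using bounded presentability of $G$), which is why the present lemma is stated as a one-sided containment.
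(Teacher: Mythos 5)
Your proof is correct and follows the same route as the paper's: lift the witness $g\in N$ to a geodesic word $w$ in the kernel of $F_S\twoheadrightarrow Q$, then push a hypothetical decomposition of $w$ as a product of conjugates of shorter kernel elements down to $G$, using $|\pi(w_i)|_S\le |w_i|_{F_S}$ to contradict the choice of $g$. Nothing is missing.
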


\begin{proof}
We denote by $N_Q$ the kernel of $F_S \rightarrow Q$. Let $x \in N$ be an element of length $n \geq 1$ that is not in the normal closure in $G$ of the set of elements of $N$ of length at most $n-1$. Let $w \in F_S$ of length $n$ that represents $x$ in $G$. Then $w$ lies in $N_Q$, and we claim $w$ does not belong to the normal closure in $F_S$ of the set of elements of $N_Q$ of length at most $n-1$.

Argue by contradiction and assume that there is a decomposition $w = \prod \alpha_i r_i \alpha_i^{-1}$, where $r_i \in N_Q$ has length at most $n-1$. If $x_i$ (resp.\ $a_i$) is the element of $G$ represented by $r_i$ (resp.\ $\alpha_i$), then by pushing the above decomposition in $G$ we obtain $x = \prod a_i x_i a_i^{-1}$. Contradiction.
\end{proof}

The following shows that the converse inclusion in the previous lemma also holds when the extension is boundedly presented. 

\begin{lem} \label{lem-gen-ext-boundedly}
Let $1 \rightarrow N \rightarrow G \rightarrow Q \rightarrow 1$ be a short exact sequence of groups. Assume that $S$ is a generating subset of $G$ such that $G$ is boundedly presented over $S$. Then $\mathcal{R}_S(Q) \subset \mathcal{X}_S(N,G)$.
\end{lem}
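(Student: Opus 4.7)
The plan is to argue the contrapositive: fix $n$, assume $n\notin\mathcal{X}_S(N,G)$, and deduce $n\notin\mathcal{R}_S(Q)$. Concretely, I aim to show that every $w\in N_Q$ of length $n$ in $F_S$ lies in the normal closure $L_{n-1}$ in $F_S$ of $N_Q$-elements of length $\leq n-1$. The bounded presentation hypothesis enters through the constant $D=\max\mathcal{R}_S(G)<\infty$: the kernel $N_G$ of $F_S\twoheadrightarrow G$ is then normally generated in $F_S$ by elements of length $\leq D$, all of which sit in $N_G\subset N_Q$. I carry out the argument for $n\geq D+1$, so that these normal generators have length $\leq n-1$ and thus automatically lie in $L_{n-1}$.

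Given such a $w$, I would first look at its image $y\in N$ in $G$, of length $\leq n$. Under the hypothesis $n\notin\mathcal{X}_S(N,G)$, every element of $N$ of length $\leq n$ lies in the normal closure $M_{n-1}$ in $G$ of $N$-elements of length $\leq n-1$, so I can write $y=\prod_i a_i x_i a_i^{-1}$ in $G$ with each $x_i\in N$ of length $\leq n-1$. Lifting each $a_i$ to some $\alpha_i\in F_S$ realizing the word length, and each $x_i$ to some $\xi_i\in F_S$ of length $|x_i|_G\leq n-1$, I obtain the word $w':=\prod_i \alpha_i\xi_i\alpha_i^{-1}$ in $F_S$. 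Since each $\xi_i$ is a lift of an element of $N$ and hence lies in $N_Q$, this $w'$ belongs to $L_{n-1}$ by definition.

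By construction $w$ and $w'$ project to the same element $y$ of $G$, so their ratio $w(w')^{-1}$ lies in $N_G$. Here the bounded presentation is decisive: $w(w')^{-1}$ factors in $F_S$ as a product of conjugates of elements of $N_G$ of length $\leq D$, each of which is an element of $N_Q$ of length $\leq n-1$, hence lies in $L_{n-1}$. Writing $w=[w(w')^{-1}]\cdot w'$ then exhibits $w\in L_{n-1}$, which is exactly the statement $n\notin\mathcal{R}_S(Q)$. The core step, and the only place where bounded presentation is genuinely used, is absorbing the $N_G$-error $w(w')^{-1}$ introduced when passing from a $G$-level identity to an $F_S$-level identity; everything else is routine bookkeeping about which conjugating words lie in which filtration level. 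The finitely many small values $n\leq D$ not directly handled here produce at worst a finite discrepancy, which disappears after passing to the equivalence class $[\mathcal{R}_S(Q)]$ used throughout the paper, and combined with Lemma \ref{lem-gen-ext} this yields Proposition \ref{prop-bounded-pres-cover}.
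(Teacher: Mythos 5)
Your argument is correct and is essentially the paper's own proof, phrased contrapositively rather than by contradiction: you lift the $G$-level decomposition of the image of $w$ to $F_S$, observe that the error $w(w')^{-1}$ lies in $N_G$, and absorb it using the bounded presentation, exactly as the paper does. Even your restriction to $n\geq D+1$ mirrors the paper's own restriction to $n\geq r+1$, and as you note the finitely many small values are harmless for the way the lemma is used (Corollary \ref{cor-central-ext-fp} only needs the $\sim$-class).
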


\begin{proof}
Again we let $N_Q$ (resp.\ $N_G$) be the kernel of $F_S \rightarrow Q$ (resp.\ $F_S \rightarrow G$). Let $r \geq 1$ such that $N_G$ is normally generated by its elements of length at most $r$. Let $n \geq r +1$ such that there is a word $w \in N_Q \subset F_S$ of length $n$, and such that $w$ does not belong to the normal closure of elements of $N_Q$ of length at most $n-1$. We shall prove that $n$ belongs to $\mathcal{X}_S(N,G)$.

We let $x$ be the element represented by the word $w$ in $G$. Then $x \in N$, and we claim that $x$ does not belong to the subgroup normally generated by elements of $N$ of length at most $n-1$. Indeed, assume that in $G$ there is a decomposition $x = \prod a_i x_i a_i^{-1}$, where $a_i \in G$ and $x_i \in N$ has length at most $n-1$. For every $i$, we let $w_i \in F_S$ representing $x_i$ in $G$ and such that $|w_i|_S \leq n-1$, and we choose $\alpha_i \in F_S$ such that $\alpha_i$ represents $a_i$ in $G$. Then $w (\prod \alpha_i w_i \alpha_i^{-1})^{-1} \in N_G$. Since $N_G$ is normally generated by its elements of length at most $r$ and $n-1 \geq r$ by assumption, we deduce that $w$ belongs to the normal closure of the set of elements $t \in N_Q$ such that $|t|_S \leq n-1$. Contradiction. 
\end{proof}


\subsection{Lacunary approximations}

Given a locally compact group $G$, we call approximation of $G$ a sequence of continuous surjective homomorphisms with discrete kernel $G_0 \to G_1 \to \dots$ and $G_i \to G$ such that the obvious diagrams commute, and such that the kernel $N$ of $G_0\to G$ is equal to the union of kernels $N_i$ of $G_0\to G_i$, and such that all $G_i$ are compactly presented.

Given such an approximation, fix a compact generating subset of $G_0$. For $i\ge 1$, let $\rho_i\in [1,+\infty]$ be the smallest length of an element in $N \smallsetminus N_{i}$. Let $s_i$ be the smallest number $s$ such that $N_i$ is normally generated by the intersection of $N_i$ with the $s$-ball in $G_0$; then $s_i<\infty$ because $G_i$ is compactly presented.

We say that the approximation is {\bf lacunary} if $\rho_{i}/s_i\to +\infty$.

\begin{prop}\label{lpla}
A compactly generated locally compact group is lacunary presented if and only if it admits a lacunary approximation. Moreover, in the discrete case, $G_0$ can chosen to be free.
\end{prop}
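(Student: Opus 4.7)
The plan is to prove the two implications separately, with Proposition~\ref{prop-bounded-pres-cover} serving as the bridge between the approximation data and the relation range.

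For the implication that a lacunary approximation $(G_i)$ forces $G$ to be lacunary presented, I would fix the compact generating subset $S$ of $G_0$ from the approximation. Since $G_0$ is compactly presented, it is boundedly presented over $S$, so Proposition~\ref{prop-bounded-pres-cover} applied to $1 \to N \to G_0 \to G \to 1$ identifies $\mathcal{R}_S(G)$ with $\mathcal{X}_S(N, G_0)$. The heart of the argument is then to show that $\mathcal{X}_S(N, G_0) \cap [s_i + 1, \rho_i - 1] = \emptyset$ for every $i$: any $x \in N$ of length $n$ in this interval lies in $N_i$ (because $n < \rho_i$), and $N_i$ is by definition normally generated in $G_0$ by $N_i \cap B_S(s_i) \subseteq N \cap B_S(n-1)$, so $x$ already lies in the normal closure of elements of $N$ strictly shorter than $n$. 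The hypothesis $\rho_i / s_i \to \infty$ then produces, for every $c > 1$, some $m = s_i + 1$ for which $[m, cm] \subseteq [s_i + 1, \rho_i - 1]$ is disjoint from $\mathcal{R}_S(G)$, which is the negation of dense relatedness.

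For the converse, assume $G$ is lacunary presented. A preliminary step is to extract from $\mathcal{R}_S(G) \nsim \N$ a strictly increasing sequence $n_j \to \infty$ and constants $c_j \to \infty$ with $\mathcal{R}_S(G) \cap [n_j, c_j n_j] = \emptyset$; this follows because, were the set of such $n$ bounded for some $c$, then $\mathcal{R}_S(G)$ would be $c$-dense in a cofinite segment of $\N$, contradicting $\mathcal{R}_S(G) \nsim \N$. In the discrete case, I would take $G_0 = F_S$ free on a finite generating set $S$ and define $G_j = F_S/N_j$, where $N_j$ is the normal closure in $F_S$ of $K \cap B_{F_S}(n_j - 1)$ and $K = \ker(F_S \to G)$. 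Each $G_j$ is finitely presented, the compatible maps $G_j \to G_{j+1} \to G$ have trivially discrete kernels, and $\bigcup_j N_j = K$ because $n_j \to \infty$. The construction gives $s_j \leq n_j - 1$ immediately, while the gap hypothesis ensures that every element of $K$ of length at most $c_j n_j$ lies in $N_j$, hence $\rho_j > c_j n_j$; thus $\rho_j/s_j \geq c_j n_j/(n_j - 1) \to \infty$, yielding a lacunary approximation with $G_0$ free.

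The main obstacle is the locally compact version of the converse, where $F_S$ has no immediate topological analogue: one must produce a compactly presented locally compact $G_0$ continuously surjecting onto $G$ with discrete kernel, in which the normal-closure construction still makes sense. Such a $G_0$ is supplied by the standard construction from \cite{Cor-dlH}, which one can think of as the fundamental group of the Rips complex of $(G, S)$ at a small enough scale. Once $G_0$ and a compact generating set $T \subseteq G_0$ are fixed, defining $N_j$ as the normal subgroup of $G_0$ generated by elements of $\ker(G_0 \to G)$ of $T$-length at most $n_j - 1$ produces compactly presented quotients $G_j$, and the length estimates above carry over verbatim. The remaining technical checks are that each $N_j$ is closed in $G_0$ and that the successive kernels remain discrete; both follow from the compactness of the set of normal generators together with the discreteness of $\ker(G_0 \to G)$ built into the construction of $G_0$.
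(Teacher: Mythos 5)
Your proof is correct and follows essentially the same route as the paper's: the forward direction is the observation that a lacunary approximation leaves the interval between $s_i$ and $\rho_i$ free of new relations, and the converse takes a compactly presented (free, in the discrete case) cover with discrete kernel and quotients by normal closures of kernel elements in balls chosen inside the multiplicative gaps of the relation range. The only difference is presentational: you make explicit the bridge via Proposition~\ref{prop-bounded-pres-cover} and the technical checks in the locally compact case, which the paper's shorter proof leaves implicit.
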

\begin{proof}
This is easy. Start from any compactly presented group $G_0$ such that $G$ can be written as a quotient of $G_0$ with discrete kernel $N$ (we can choose $G_0$ free in the discrete case); fix a compact generating subset of $G_0$. First define $H_n$ as the quotient of $G_0$ with the normal closure $N_n$ of $N\cap B(n)$, where $B(n)$ is the $n$-ball in $G_0$. Then $N=\bigcup N_n$ (ascending union).

Assume now that that $G$ is lacunary presented. Then for every integers $c,n_0\ge 1$, we can find $n\ge n_0$ such that $[n,cn]$ contains no element of the relation range. Therefore, we can extract from $(H_n)$ a subsequence satisfying the required conditions.

Conversely, the existence of a lacunary approximation implies that there is no new relation of size between $s_i$ and $\rho_{i}$ and since the ratio $\rho_{i}/s_i$ tends to infinity, this implies that the relation range is not at finite multiplicative distance to $\N$.
\end{proof}

Let us provide an application of lacunary approximations to the Hopfian property, extending the recent result by Coulon and Guirardel \cite{CG} that lacunary hyperbolic groups are Hopfian. Recall that a group is Hopfian if all its surjective endomorphisms are injective.

\begin{prop} \label{prop-lac-approx-hopf}
Let $G$ be a finitely generated group with a lacunary approximation by Hopfian groups. Then $G$ is Hopfian.
\end{prop}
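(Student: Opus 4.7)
The plan is to argue by contradiction. Suppose $\varphi : G \to G$ is a surjective endomorphism with some $g \in G \setminus \{1\}$ satisfying $\varphi(g) = 1$. By Proposition \ref{lpla}, I may take $G_0$ to be free in the approximation, and I fix a free basis $S$ with respect to which the word length on $G_0$ is defined. Since $G_0$ is free and $\pi : G_0 \to G$ is surjective, $\varphi$ admits a lift $\tilde{\varphi} : G_0 \to G_0$ satisfying $\pi \circ \tilde{\varphi} = \varphi \circ \pi$; in particular $\tilde{\varphi}(N) \subseteq N$. Set $C = \max_{s \in S} |\tilde{\varphi}(s)|$, so that $|\tilde{\varphi}(w)| \le C|w|$ for every $w \in G_0$. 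I will also pick a lift $\tilde{g} \in G_0$ of $g$.

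The crucial step, where the lacunary hypothesis is used, is to descend $\tilde{\varphi}$ to an endomorphism $\tilde{\varphi}_i : G_i \to G_i$ for all sufficiently large $i$. Recall that $N_i$ is normally generated in $G_0$ by $N_i \cap B(s_i)$. For any $r \in N_i \cap B(s_i)$ one has $\tilde{\varphi}(r) \in N$ with $|\tilde{\varphi}(r)| \le Cs_i$; by lacunarity $\rho_i/s_i \to \infty$, so for $i$ large enough $Cs_i < \rho_i$, which forces $\tilde{\varphi}(r) \in N_i$ since $\rho_i$ is the smallest length of an element of $N \setminus N_i$. Hence $\tilde{\varphi}(N_i) \subseteq N_i$ for large $i$, so $\tilde{\varphi}$ induces an endomorphism $\tilde{\varphi}_i : G_i \to G_i$ with $\pi_i \circ \tilde{\varphi} = \tilde{\varphi}_i \circ \pi_i$.

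I then check that $\tilde{\varphi}_i$ is surjective for large $i$. For each $s \in S$, surjectivity of $\varphi$ yields some $y_s \in G_0$ with $\pi(s) = \varphi(\pi(y_s))$, so $s \cdot \tilde{\varphi}(y_s)^{-1} \in N$. Since $S$ is finite and $N = \bigcup_i N_i$ is an ascending union, for large $i$ every such element lies in $N_i$; thus $\tilde{\varphi}_i$ hits each generator $\pi_i(s)$ of $G_i$, and is surjective. By the Hopfian hypothesis $\tilde{\varphi}_i$ is then injective. To finish, the computation $\pi(\tilde{\varphi}(\tilde{g})) = \varphi(g) = 1$ gives $\tilde{\varphi}(\tilde{g}) \in N$, so $\tilde{\varphi}(\tilde{g}) \in N_i$ for $i$ large, i.e.\ $\tilde{\varphi}_i(\pi_i(\tilde{g})) = 1$. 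Injectivity of $\tilde{\varphi}_i$ yields $\pi_i(\tilde{g}) = 1$, whence $\tilde{g} \in N_i \subseteq N$ and $g = \pi(\tilde{g}) = 1$, contradicting the choice of $g$. The main obstacle is the descent of $\tilde{\varphi}$ to the $G_i$: once this succeeds, the Hopfian hypothesis and a straightforward diagram chase handle the rest. Everything hinges on trading $s_i$ against $\rho_i$ via the lacunary gap, which is exactly what bounds the blow-up of lengths caused by applying $\tilde{\varphi}$ to short normal generators of $N_i$.
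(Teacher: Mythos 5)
Your proof follows the paper's argument step by step: lift the surjective endomorphism to an endomorphism $\tilde\varphi$ of the free group $G_0$, use the gap $\rho_i/s_i\to\infty$ to see that the normal generators of $N_i$ of length at most $s_i$ have images of length at most $Cs_i<\rho_i$, hence lie in $N_i$, so that $\tilde\varphi$ descends to endomorphisms $\tilde\varphi_i$ of $G_i$ for all large $i$; verify surjectivity on the finitely many generators, use Hopfianness of the $G_i$ to get injectivity, and deduce injectivity of the original endomorphism by chasing a lift of a putative kernel element through some large-index $G_i$. This last chase is precisely how the paper's closing sentence (``therefore $f$ is an automorphism as well'') is meant to be justified, so your contradiction set-up is the same argument made explicit, and all of these steps are correct. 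The one point you need to repair is the opening reduction: you cannot invoke Proposition \ref{lpla} to ``take $G_0$ free'', because that proposition produces a possibly \emph{different} lacunary approximation (by quotients of a free group by ball-generated normal closures), and its terms are not known to be Hopfian, so the hypothesis you invoke later to get injectivity of $\tilde\varphi_i$ would be lost. The correct reduction, as in the paper, keeps the given approximation by Hopfian groups and precomposes it with an epimorphism from a finitely generated free group onto $G_0$, shifting indices; since $G_0$ is compactly (here finitely) presented by the definition of an approximation, its presentation kernel is normally generated in bounded length, so each $s_i$ changes by at most a bounded amount while $\rho_i$ does not decrease, and the enlarged approximation is still lacunary. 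With that substitution in place of the appeal to Proposition \ref{lpla}, your proof is complete and coincides with the paper's.
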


\begin{proof}
First, we can find an epimorphism from a free group $G_{-1}$ to $G_0$ and then shift indices to assume that $G_{0}$ is free. Next, let $f$ be a surjective endomorphism of $G$. Since $G_0$ is free, we can lift it to an endomorphism $\hat{f}$ of $G_0$; then $\hat{f}$ maps the 1-ball into the $k$-ball for some $k$. We can describe $G_i$ as the quotient of $G_0$ by the normal closure of a certain subset $R_i$ of the $s_i$-ball. Then $\hat{f}(R_i)$ belongs to the $ks_i$-ball and also belongs to the kernel of $G_0\to G$. If $i$ is large enough, say $i\ge i_0$, so that $\rho_{i}>ks_i$, it follows that $\hat{f}(R_i)$ belongs to the kernel of $G_0 \to G_i$. Thus for $i\ge i_0$, $\hat{f}$ factors to an endomorphism $f_i$ of $G_i$, still lifting $f$. Then since $f$ is surjective, we can write generators as elements of the image in $G$, and lift this to $G_i$ for, say $i\ge i_1\ge i_0$. Thus $f_i$ is a surjective endomorphism for all $i\ge i_1$. Since $G_i$ is Hopfian, it follows that $f_i$ is an automorphism for all $i\ge i_1$, and therefore $f$ is an automorphism as well.
\end{proof}

\begin{cor}[Coulon-Guirardel \cite{CG}]
Finitely generated lacunary hyperbolic groups are Hopfian.
\end{cor}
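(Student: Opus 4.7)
The plan is to derive this from Proposition \ref{prop-lac-approx-hopf} combined with the Olshanskii--Osin--Sapir structural theorem for lacunary hyperbolic groups \cite{OOS}. Recall that OOS proved that a finitely generated group $G$ is lacunary hyperbolic if and only if it can be written as a direct limit of a sequence $G_1\to G_2\to\cdots$ of finitely generated groups (with generating sets sent bijectively to generating sets), each $G_i$ being $\delta_i$-hyperbolic, and with the \emph{injectivity radius} $r_i$ of the map $G_i\to G$ satisfying $\delta_i/r_i\to 0$. My first step is to recall this characterization.

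Next, I would lift this to the setup of Proposition \ref{prop-lac-approx-hopf}. Choose a free group $G_0$ mapping onto $G_1$; composing gives compatible epimorphisms $G_0\twoheadrightarrow G_i$ and $G_0\twoheadrightarrow G$. Endow $G_0$ with a finite generating set lifting the chosen generators, so that word lengths are comparable on both sides. The key input is that every $\delta$-hyperbolic group admits a finite presentation (a Dehn presentation) whose relators have length bounded by a linear function of $\delta$. Consequently $s_i \leq C\delta_i$, while the injectivity radius $r_i$ is, up to a constant coming from the generating set lift, a lower bound for $\rho_i$. The OOS condition $\delta_i/r_i\to 0$ therefore implies $s_i/\rho_i\to 0$, which is exactly the lacunary approximation condition of \S\ref{subsec-ext}.

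Finally, by the Hopf property for hyperbolic groups (Sela \cite{Sela} in the torsion-free case, Reinfeldt--Weidmann in general), each $G_i$ is Hopfian, so $G$ admits a lacunary approximation by Hopfian groups. Proposition \ref{prop-lac-approx-hopf} then yields that $G$ itself is Hopfian.

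The only delicate point is the passage from the OOS parameters $(\delta_i,r_i)$ to the parameters $(s_i,\rho_i)$ used in the definition of a lacunary approximation; this is the step where one must invoke that hyperbolic groups are presented with relators of size $O(\delta)$, and carefully compare word lengths in $G_0$ versus in $G_i$. Everything else is a direct application of earlier results in the paper together with classical black boxes on hyperbolic groups.
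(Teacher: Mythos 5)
Your proposal is correct and follows essentially the same route as the paper: the paper likewise combines Proposition \ref{prop-lac-approx-hopf} with the fact (quoted from Olshanskii--Osin--Sapir, Th.~3.3) that lacunary hyperbolic groups admit lacunary approximations by hyperbolic groups, and with the Hopf property of hyperbolic groups due to Sela and Reinfeldt--Weidmann. The only difference is that you unpack the OOS characterization and the comparison of parameters $(\delta_i,r_i)$ with $(s_i,\rho_i)$ explicitly, whereas the paper cites this as a black box.
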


\begin{proof}
Sela proved that torsion-free hyperbolic groups are Hopfian  \cite{Sela-hopf}, and this was more recently extended to arbitrary hyperbolic groups by Reinfeldt and Weidmann in the preprint \cite{Rein-Weid}. Given that finitely generated lacunary hyperbolic groups admit lacunary approximations by hyperbolic groups \cite[Th.\ 3.3]{OOS}, we conclude by Proposition \ref{prop-lac-approx-hopf}.
\end{proof}

Proposition \ref{prop-lac-approx-hopf} can also be applied beyond the lacunary hyperbolic case. For instance, those partial finite presentations of $\Z\wr\Z$ are easily seen to be residually finite (and hence Hopfian), so the lacunary presented groups obtained in this way are Hopfian groups.

\subsection{Asymptotic cones of densely related groups} \label{subsec-cones}

The following terminology and notation are essentially borrowed from \cite{Papasoglu}. We let $\mathbf{I}_2 = [0,1] \times [0,1]$ be the unit Euclidean square of dimension two, and denote by $\partial \mathbf{I}_2$ its boundary. A collection of squares $D_1, \ldots, D_k$ is defined to be a partition of $\mathbf{I}_2$ if $D_i \cap D_j = \partial D_i \cap \partial D_j$ whenever $i \neq j$, and if $\mathbf{I}_2$ is the union of the squares $D_i$.

If $X$ is a geodesic metric space, a loop in $X$ is by definition a continuous map $\alpha : \partial \mathbf{I}_2 \rightarrow X$, and we freely identify a loop with its image in $X$. A partition $\pi$ of $\alpha$ is a continuous map extending $\alpha$ to $\partial D_1 \cup \ldots \cup \partial D_k$, where $D_1, \ldots, D_k$ is a partition of $\mathbf{I}_2$. We define the \textbf{mesh} of $\pi$ as the maximal length of the paths $\pi(\partial D_i)$.

\begin{lem} \label{lem-limit-loops}
Let $X$ be a geodesic metric space, and let $\mathcal{C} = \mathrm{Cone}^{\omega}(X,(x_n),(s_n))$ be an asymptotic cone of $X$. Then
\begin{enumerate}[label=(\alph*)]
\item any loop in $\mathcal{C}$ is the $\omega$-limit of a sequence of loops in $X$;
\item if $\alpha$ is a loop in $\mathcal{C}$ which is the $\omega$-limit of a sequence of loops $(\alpha_n)$ in $X$, then any partition of $\alpha$ is the $\omega$-limit of a sequence of partitions of $\alpha_n$.
\end{enumerate}
\end{lem}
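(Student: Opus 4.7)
Both parts amount to approximating continuous maps from a compact $1$-dimensional complex into $\mathcal{C}$ by continuous maps into $X$, via a finite subdivision combined with geodesic interpolation, followed by a diagonal extraction.

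For (a), I would parametrize $\partial\mathbf{I}_2$ as $\R/4\Z$. By compactness, $\alpha$ is uniformly continuous, so there is a modulus $\eta:\R_+\to\R_+$ with $\eta(\varepsilon)\to 0$ as $\varepsilon\to 0$ such that arclength at most $\varepsilon$ implies distance at most $\eta(\varepsilon)$ in $\mathcal{C}$. For each integer $k\ge 1$, choose a subdivision $0=t_0^{(k)}<\dots<t_{N_k}^{(k)}=4$ of spacing at most $1/k$, arranged so that these subdivisions refine one another as $k$ grows. Fix compatibly for each vertex $t_i^{(k)}$ a sequence $(y_{i,n}^{(k)})_n$ in $X$ representing $\alpha(t_i^{(k)})$. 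Then define a loop $\beta_n^{(k)}:\partial\mathbf{I}_2\to X$ by traveling on each arc $[t_i^{(k)},t_{i+1}^{(k)}]$ along a proportionally parametrized geodesic from $y_{i,n}^{(k)}$ to $y_{i+1,n}^{(k)}$. For each fixed $k$, on an $\omega$-large set of indices $n$ every consecutive jump satisfies $d(y_{i,n}^{(k)},y_{i+1,n}^{(k)})\le 2\eta(1/k)s_n$, since there are only finitely many arcs and finite intersections of $\omega$-large sets are $\omega$-large. A diagonal argument then produces a map $n\mapsto k(n)\to+\infty$ such that the loops $\alpha_n:=\beta_n^{(k(n))}$ satisfy $[\alpha_n]_\omega=\alpha$: for any $t\in\partial\mathbf{I}_2$, locating $t$ in the arc $[t_i^{(k(n))},t_{i+1}^{(k(n))}]$ gives via the triangle inequality $d(\alpha_n(t),y_{i,n}^{(k(n))})\le 2\eta(1/k(n))s_n$, which vanishes after dividing by $s_n$ and taking the $\omega$-limit.

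For (b), let $\{D_j\}$ be the squares of the partition, $\Gamma=\bigcup_j\partial D_j$ the finite graph carrying $\pi$, and note $\pi|_{\partial\mathbf{I}_2}=\alpha$. First, I would pin down representatives for $\pi$ at every vertex $v$ of $\Gamma$: for $v\in\partial\mathbf{I}_2$ take $v_n:=\alpha_n(v)$, which is a valid representative by the hypothesis $[\alpha_n]_\omega=\alpha$, and at interior vertices choose any representing sequence. For each edge of $\Gamma$ contained in $\partial\mathbf{I}_2$, set $\pi_n$ equal to the corresponding restriction of $\alpha_n$. For each interior edge, apply the construction of (a) to the restriction of $\pi$ to that edge, forcing the subdivision to contain the two endpoints as its extremal vertices and using the already chosen representatives there. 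The resulting maps $\pi_n:\Gamma\to X$ agree at every vertex of $\Gamma$ and are therefore continuous, extend $\alpha_n$ by construction, and their $\omega$-limit equals $\pi$ by the same pointwise argument as in (a).

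The main obstacle is the diagonal extraction in (a): the sequence $k(n)$ must grow to infinity slowly enough that the pointwise control of $\alpha_n$ holds $\omega$-almost everywhere at \emph{every} (hence uncountably many) point $t\in\partial\mathbf{I}_2$, not merely at countably many reference points. This is precisely what the nested compatibility of the subdivisions buys us: the quality of the approximation at an arbitrary $t$ is controlled, for each $k$, by the quality at the finitely many vertices of the $k$-th subdivision, which can be controlled $\omega$-almost surely. Once $k(n)$ is chosen so that for every $k$ the inclusion $\{n:k(n)\ge k\text{ and all vertex jumps for that }k\text{ are }\le 2\eta(1/k)s_n\}$ is $\omega$-large, the $\omega$-convergence at each $t$ follows.
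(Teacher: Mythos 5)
Your overall route (subdivide, pick vertex representatives, interpolate by geodesics, diagonalize over the ultrafilter) is the standard one; the paper itself only cites Kent's Proposition 2.2 and says it adapts, so a self-contained argument is fine in principle. But there is a genuine gap at the very step you flag as the main obstacle. Your conditions defining the $\omega$-large set at level $k$ only control the \emph{consecutive} vertex jumps, and your concluding estimate only shows that $\alpha_n(t)$ lies within $2\eta(1/k(n))s_n$ of the vertex representative $y_{i(n),n}^{(k(n))}$, where both the level $k(n)$ and the vertex index $i(n)$ depend on $n$. To conclude $[\alpha_n(t)]_\omega=\alpha(t)$ you still need that this $n$-dependent diagonal of vertex representatives stays within $\varepsilon s_n$ of one \emph{fixed} representative $(z_n^t)$ of $\alpha(t)$ for $\omega$-almost every $n$. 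For each fixed vertex this closeness holds only on an $\omega$-large set depending on that vertex, and since the vertex varies with $n$ you cannot intersect these sets; the only estimate available from your hypotheses is chaining through consecutive level-$k(n)$ vertices down to a fixed coarse vertex, which gives a bound of order $N_{k(n)}\,\eta(1/k(n))$ --- not small in general (e.g.\ $\eta(\varepsilon)=\sqrt{\varepsilon}$ with mesh $1/k$, or any non-rectifiable loop). So ``the $\omega$-convergence at each $t$ follows'' is a non sequitur as written: nested subdivisions and compatible representatives alone do not buy it.

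The fix is easy but must be made explicit: enlarge the $\omega$-large set at level $k$ by the finitely many additional conditions that \emph{every pair} of vertex representatives of levels $\le k$ satisfies $d(y_{u,n},y_{w,n})\le (d_{\mathcal{C}}(\alpha(u),\alpha(w))+1/k)\,s_n$ (each such condition is $\omega$-large because rescaled distances $\omega$-converge to cone distances, and there are finitely many per level). Then, for fixed $t$ and $\varepsilon>0$, choose a fixed coarse level $k_0$ with $\eta(2/k_0)$ small, compare $\alpha_n(t)$ first to $y_{i(n),n}^{(k(n))}$ (one jump), then to the fixed level-$k_0$ vertex adjacent to $t$ (a cross-level pair condition, now available), and finally to $z_n^t$ (a single fixed comparison, $\omega$-a.e.). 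Part (b) inherits exactly the same issue on each interior edge and is repaired the same way; note also that for the lemma to serve its purpose in Proposition \ref{prop-1-cone-sc} the constructed partitions should have rescaled mesh controlled by the mesh of $\pi$, which your geodesic interpolation does provide once the pairwise conditions above are in place, but which is worth stating.
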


\begin{proof}
Statement $(a)$ is proved in \cite[Prop.\ 2.2]{Kent} for paths rather than loops, but the proof can be easily adapted to realize any loop in $\mathcal{C}$ as the $\omega$-limit of a sequence of loops in $X$.

Statement $(b)$ is obtained similarly, working in each square of the partition.\qedhere
\end{proof}

Recall that by a theorem of Gromov \cite[\S 5.F]{Gro-asy}, if a finitely generated group $G$ has all its asymptotic cones simply connected, then $G$ is finitely presented (and has a polynomially bounded Dehn function). One cannot hope to obtain the same conclusion if we weaken the hypothesis by requiring that \textit{one} asymptotic cone of $G$ is simply connected, as for example any non-hyperbolic lacunary hyperbolic group is infinitely presented \cite[Appendix]{OOS}. Recall that a group is \textbf{lacunary hyperbolic} if it admits (at least) one asymptotic cone that is a real tree \cite{OOS}.

We next show that if $G$ has one asymptotic cone that is simply connected, then the group $G$ is lacunary presented. The proof follows the same strategy as the proof of the direct implication of Theorem 4.4 in \cite{Drutu-cone}, which says that simple connectedness of all asymptotic cones implies a certain division property for loops (see \cite{Drutu-cone} for the relevant definition).

\begin{prop} \label{prop-1-cone-sc}
Let $G$ be a compactly generated locally compact group, and $(s_n)$ a sequence of positive numbers. Assume that the asymptotic cone $\mathrm{Cone}^{\omega}(G,(s_n))$ is simply connected for some ultrafilter $\omega$ such that $\lim_\omega s_n=+\infty$. Then $\{s_n:n\in\N\} \not\subset \RR$. 
\end{prop}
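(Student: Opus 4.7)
The plan is to argue by contradiction: if $\{s_n\}\subset\mathcal{R}(G)$, I will deduce that the cone $\mathcal{C}=\mathrm{Cone}^{\omega}(G,(s_n))$ is not simply connected. Fix a compact generating subset $S$ of $G$. The hypothesis unpacks, by the discussion preceding the proposition, to the existence of $C\ge 1$ and, for each $n$, an integer $r_n\in \mathcal{R}_S(G)\cap[C^{-1}s_n, Cs_n]$. For each such $n$ pick a word $w_n\in F_S$ of length $r_n$ whose image in $G$ is trivial but which does not lie in the normal closure of the relations of length at most $r_n-1$. Parametrizing $w_n$ by an edge path at constant speed yields a loop $\alpha_n\colon[0,1]\to\mathrm{Cay}(G,S)$ based at $1_G$, which is $r_n$-Lipschitz for $d_S$, hence $C$-Lipschitz for the rescaled metric $s_n^{-1}d_S$.

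Next I pass to the cone. Since each $\alpha_n$ is uniformly $C$-Lipschitz and based at $1_G$, its $\omega$-limit is a $C$-Lipschitz loop $\alpha\colon[0,1]\to\mathcal{C}$ based at the distinguished point. If $\mathcal{C}$ is simply connected, $\alpha$ extends to a continuous disk $D\colon\mathbf{I}_2\to\mathcal{C}$. The cone being geodesic and $D$ uniformly continuous, for every $\varepsilon>0$ one obtains a partition of $\alpha$ of mesh at most $\varepsilon$, by subdividing $\mathbf{I}_2$ into sufficiently small sub-squares and joining the images of their corners by geodesic segments in $\mathcal{C}$.

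Now invoke Lemma \ref{lem-limit-loops}(b) to lift this partition to partitions $\pi_n$ of $\alpha_n$ in $\mathrm{Cay}(G,S)$. Their meshes $m_n$ satisfy $\lim_{\omega} m_n/s_n\le \varepsilon$, so $\omega$-almost surely $m_n\le 2\varepsilon s_n$. Choose $\varepsilon<1/(2C)$ at the outset; then $2\varepsilon s_n<C^{-1}s_n\le r_n$ for $\omega$-almost every $n$. For any such $n$, each sub-loop of $\pi_n$ has length strictly less than $r_n$, and the combinatorics of the partition express $w_n$ as a product of conjugates of these sub-loops in $F_S$, placing $w_n$ in the normal closure of relations of length $<r_n$. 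This contradicts the choice of $w_n$.

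I expect the main technical obstacle to be the last paragraph, where three things must be tracked simultaneously: the quantitative mesh bound coming from Lemma \ref{lem-limit-loops}(b); the replacement of the lifted partition edges (a priori continuous paths in $\mathrm{Cay}(G,S)$ as a metric space) by genuine edge paths of controlled length in the Cayley graph; and the standard van Kampen-style dictionary between a topological partition of a disk bounded by $\alpha_n$ and an algebraic decomposition of $w_n$ as a product of conjugates of shorter relations in $F_S$. The purely geometric steps (construction of $\alpha$ and of small-mesh partitions in $\mathcal{C}$) are routine by comparison.
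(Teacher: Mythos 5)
Your argument is correct and follows essentially the same route as the paper: build the limit loop $\alpha$, use simple connectedness and uniform continuity of the filling disk to produce a partition of $\alpha$ of small mesh, lift it via Lemma \ref{lem-limit-loops}(b) to partitions of the $\alpha_n$, and contradict the fact that $w_n$ is not generated by shorter relations. The only difference is cosmetic: the paper phrases the final step as a lower bound on the mesh of any partition of $\alpha_n$ (the contrapositive of your van Kampen--style dictionary), whereas you run the same dictionary directly on the lifted partition; the paper likewise leaves that dictionary, and the replacement of lifted paths by edge paths, at the same level of detail you flag.
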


\begin{proof}
Let $S$ be a compact generating subset of $G$. We argue by contradiction, and assume that $\{s_n:n\in\N\} \subset \RR$. This implies that there exist a constant $c > 0$ and a sequence of relations $r_n \in F_S$ so that \[ c^{-1} s_n \leq |r_n|_S \leq c s_n,\] and $r_n$ is not generated by relations of smaller length. By construction, the sequence of loops $\alpha_n: \partial \mathbf{I}_2 \rightarrow \mathrm{Cay}(G,S)$ parametrized proportionally to the length associated to $r_n$ yields a loop $\alpha: \partial \mathbf{I}_2 \rightarrow \mathrm{Cone}^{\omega}(G,(s_n))$. Since $\mathrm{Cone}^{\omega}(G,(s_n))$ is supposed to be simply connected, the map $\alpha: \partial \mathbf{I}_2 \rightarrow \mathrm{Cone}^{\omega}(G,(s_n))$ can be extended to a continuous function $\sigma : \mathbf{I}_2 \rightarrow \mathrm{Cone}^{\omega}(G,(s_n))$. Now since $\mathbf{I}_2$ is compact, the map $\sigma$ is uniformly continuous, and there exists $\eta > 0$ so that $d_{\omega}(\sigma(t),\sigma(u))$ is at most $c^{-1}/5$ as soon as the distance between $t$ and $u$ is at most $\eta$. Let us consider the partition of $\mathbf{I}_2$ given by the net \[ \left\{(a \eta, b \eta): a,b \in \Z, \, 0 \leq a,b \leq 1/ \eta \right\}.\] Since the mesh of this partition is equal to $4 \eta$, the restriction of $\sigma$ to this partition yields a partition of the loop $\alpha$ in $\mathrm{Cone}^{\omega}(G,(s_n))$ of mesh at most $4c^{-1}/5$.

From a geometric point of view, the fact that $r_n$ does not belong to the normal subgroup of $F_S$ generated by relations of length at most $|r_n|_S -1$ implies that the mesh of any partition of the loop $\alpha_n$ is at least $|r_n|_S$, so in particular at least $c^{-1} s_n$. Now we claim that this implies that the mesh of any partition of the loop $\alpha$ is at least $c^{-1}$. Indeed according to Lemma \ref{lem-limit-loops}, any partition $\pi$ of the loop $\alpha$ is the $\omega$-limit of a sequence of partitions $\pi_n$ of the loop $\alpha_n$ in $\mathrm{Cay}(G,S)$. Being the limit over the ultrafilter $\omega$ of the mesh of $\pi_n$ rescaled by $s_n$, the mesh of $\pi$ is at least $c^{-1}$ by the previous observation. This readily gives a contradiction with the previous paragraph.
\end{proof}

\begin{cor} \label{cor-full-pres-cones}
Let $G$ be a densely related compactly generated group. Then none of the asymptotic cones of $G$ are simply connected.
\end{cor}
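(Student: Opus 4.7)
The plan is to derive Corollary \ref{cor-full-pres-cones} as an immediate consequence of Proposition \ref{prop-1-cone-sc} by contraposition. Suppose, aiming at a contradiction, that $G$ is densely related and admits a simply connected asymptotic cone $\mathrm{Cone}^{\omega}(G,(s_n))$. Following the standard convention for asymptotic cones I may assume $\lim_{\omega} s_n = +\infty$, so Proposition \ref{prop-1-cone-sc} applies and delivers
\[
\{s_n : n \in \N\} \not\subset \RR.
\]

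Next I would exploit the hypothesis that $G$ is densely related, i.e.\ $\RR \sim \N$. Unwinding the relation $\sim$ (finite multiplicative Hausdorff distance), this gives a constant $c_0 > 0$ and a representative $R$ of $\RR$ such that every positive integer lies within multiplicative factor $c_0$ of some element of $R$. Since $\lim_{\omega} s_n = +\infty$, we have $s_n \geq 1$ for $\omega$-almost all $n$, and each such $s_n$ is within a bounded multiplicative factor of some integer, hence within a bounded multiplicative factor of some element of $R$. This is exactly the statement $\{s_n\} \subset \RR$ in the partial order defined in Section \ref{sec-rel-range}, contradicting the previous display.

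No serious obstacle arises: once Proposition \ref{prop-1-cone-sc} is in place, the corollary is essentially a translation between two ways of saying that a group has ``many relations at all scales''. The only bookkeeping point to watch is that the standard convention $\lim_\omega s_n = +\infty$ is adopted for asymptotic cones, so that Proposition \ref{prop-1-cone-sc} applies to every asymptotic cone under consideration.
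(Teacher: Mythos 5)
Your argument is correct and matches the paper's: the corollary is stated there as an immediate consequence of Proposition \ref{prop-1-cone-sc}, via exactly the contrapositive you describe (densely related means $\mathcal{R}(G)\sim\N$, so any scaling sequence of an asymptotic cone lies in $\mathcal{R}(G)$ up to bounded multiplicative error). The only cosmetic point is that to get the uniform multiplicative bound for \emph{all} $n$ rather than just $\omega$-almost all $n$, one can replace $s_n$ by $\max(s_n,1)$, which changes neither the cone nor the applicability of the proposition.
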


Since a real tree is a simply connected metric space, Corollary \ref{cor-full-pres-cones} also admits the following consequence, which can also be derived from Proposition \ref{lpla} using the combinatorial characterization of lacunary hyperbolic groups from \cite{OOS} in the discrete case, and from \cite{LB-lac-hyp} in the locally compact case.

\begin{cor} \label{cor-lac-hyp-lac-rel}
Any lacunary hyperbolic locally compact group is lacunary presented.
\end{cor}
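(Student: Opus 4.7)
The plan is to deduce this directly from Corollary \ref{cor-full-pres-cones}. By definition, a lacunary hyperbolic locally compact group $G$ admits at least one asymptotic cone $\mathcal{C}$ which is a real tree. Real trees are contractible (they are uniquely geodesic and any loop retracts onto a point along those geodesics), hence in particular simply connected. Consequently $G$ has at least one simply connected asymptotic cone, so by the contrapositive of Corollary \ref{cor-full-pres-cones} the group $G$ is not densely related. By definition this means $G$ is lacunary presented, which is the desired conclusion. This short deduction is valid in both the discrete and the locally compact setting, since the definition of lacunary hyperbolicity in each setting (via \cite{OOS}, resp.\ \cite{LB-lac-hyp}) requires a real-tree asymptotic cone.

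An alternative route, mentioned in the text just above the statement, is to go through Proposition \ref{lpla}. The combinatorial characterization of lacunary hyperbolic groups from \cite{OOS} (and \cite{LB-lac-hyp} in the locally compact case) provides an approximation $G_0 \twoheadrightarrow G_1 \twoheadrightarrow \cdots \twoheadrightarrow G$ by hyperbolic groups with hyperbolicity constants $\delta_i$ and injectivity radii $r_i$ satisfying $\delta_i/r_i \to 0$. One then checks that this is a lacunary approximation in the sense of Proposition \ref{lpla}: the smallest length of a new relation, $\rho_i$, is comparable to $r_i$ by definition of the injectivity radius, while $s_i$ (the minimal size of a set of relators normally generating the kernel of $G_0 \to G_i$) is bounded by $O(\delta_i)$ using the standard fact that a $\delta$-hyperbolic group admits a presentation with relators of length $\leq C\delta$ for some universal constant $C$. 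Hence $\rho_i/s_i \gtrsim r_i/\delta_i \to \infty$, and Proposition \ref{lpla} concludes.

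The first route is completely immediate and has no real obstacle; the second route is slightly longer and its only subtle point is the translation of the hyperbolic-approximation data into the bound $s_i = O(\delta_i)$. Given the direct availability of Corollary \ref{cor-full-pres-cones}, I would present only the first proof in the text and leave the second one as a remark.
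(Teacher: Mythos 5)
Your proof is correct and follows exactly the paper's route: the paper deduces the corollary from Corollary \ref{cor-full-pres-cones} via the observation that a real tree is simply connected, and it likewise mentions the alternative derivation from Proposition \ref{lpla} using the combinatorial characterization of lacunary hyperbolicity from \cite{OOS} and \cite{LB-lac-hyp}. Nothing to add.
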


\begin{rem}\label{comparisonLHLP}
The class of lacunary presented groups is much larger than the class of lacunary hyperbolic groups. For instance, in the discrete case, it contains all finitely presented groups and is stable under direct powers and direct products with finitely presented groups (Corollary \ref{stability}), which are essentially never lacunary hyperbolic (recall that being direct limits of hyperbolic groups, lacunary hyperbolic groups contain no copy of $\Z^2$).
\end{rem}

\begin{rem}\label{rem-linear}
(See also Question \ref{linear-question}) We do not know any example of a finitely generated linear group that is neither finitely presented nor densely related. If it is true that every finitely generated infinitely presented linear group is densely related, then this implies according to Corollary \ref{cor-lac-hyp-lac-rel} that every finitely generated lacunary hyperbolic group that is linear is actually a hyperbolic group (because a finitely presented lacunary hyperbolic group is hyperbolic \cite[Appendix]{OOS}). Whether this last assertion is true was asked in \cite{OOS}.
\end{rem}

\section{Specific classes of groups and examples} \label{sec-onto-Z}

\subsection{Classes of groups with various relation ranges} \label{subsec-various-RR}

\subsubsection{Lacunary hyperbolic groups}

Recall from Corollary \ref{cor-lac-hyp-lac-rel} that every finitely generated lacunary hyperbolic group is lacunary presented. The following proposition, the proof of which is a combination of works of Bowditch \cite{Bowditch-RR} and Olshanskii-Osin-Sapir \cite{OOS}, says that this is actually the only restriction on the relation range of a lacunary hyperbolic group. 

\begin{prop}
For every subset $I$ of $\N$ with $I\nsim\N$, there exists a lacunary hyperbolic group $\Gamma$ with $\mathcal{R}(\Gamma)\sim I$.
\end{prop}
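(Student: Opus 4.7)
The plan is to split into two cases according to whether $I$ is finite or infinite, and in the infinite case to use Bowditch's small cancellation construction with relator lengths carefully extracted from $I$, verifying lacunary hyperbolicity through the OOS criterion. If $I$ is finite then $I\sim\emptyset$, and it suffices to take $\Gamma=F_2$: it is hyperbolic (so all of its asymptotic cones are $\R$-trees) and finitely presented, hence its relation range is finite and thus $\sim I$.

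Assume from now on that $I$ is infinite with $I\nsim\N$. The first step is to extract a subsequence $(l_k)_{k\ge 1}\subset I$, listed in increasing order, satisfying:
\begin{enumerate}[label=(\alph*)]
\item $\{l_k\}\sim I$;
\item $\limsup_{k\to\infty}\,l_{k+1}/l_k=+\infty$.
\end{enumerate}
To do this, form the dyadic blocks $B_t=I\cap[2^t,2^{t+1})$ and pick one element from each nonempty $B_t$; listed in increasing order, these produce the desired $(l_k)$. Property (a) is immediate with constant $c=2$, and property (b) follows from $I\nsim\N$: for every $M$ there exists $n$ with $I\cap[M^{-1}n,Mn]=\emptyset$, forcing a long run of consecutive blocks $B_t$ to be empty, and hence a ratio $l_{k+1}/l_k>M$.

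For the next step I would choose, by a greedy construction over the free group $F=F(a,b)$, cyclically reduced words $r_k\in F$ of length $\ell_k\in[l_k,Cl_k]$ (for some absolute constant $C$), such that the family $R=\{r_k\}_{k\ge 1}$ satisfies the metric $C'(1/6)$ small cancellation condition; two generators are sufficient, since only the lengths need to be strictly increasing and generic words of distinct lengths avoid long common pieces. Setting $\Gamma=\langle a,b\mid R\rangle$, Bowditch's argument in \cite{Bowditch-RR} (resting on Greendlinger's lemma) yields $\mathcal{R}_S(\Gamma)\sim\{\ell_k\}\sim\{l_k\}\sim I$: each $r_k$ fails to lie in the normal closure of $\{r_j:j<k\}$, while conversely no relations appear strictly between two consecutive relator lengths.

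For lacunary hyperbolicity I would present $\Gamma$ as the directed union of the hyperbolic $C'(1/6)$ groups $\Gamma_k=\langle a,b\mid r_1,\dots,r_k\rangle$. Standard small cancellation estimates bound the hyperbolicity constant $\delta_k$ of $\Gamma_k$ by $O(\ell_k)$, while the injectivity radius of $\Gamma_k\twoheadrightarrow\Gamma$ (the minimal length of a word trivial in $\Gamma$ but not in $\Gamma_k$) is of order $\ell_{k+1}$. Along a subsequence where $\ell_{k+1}/\ell_k\to+\infty$, provided by (b), the injectivity radius becomes arbitrarily large compared to $\delta_k$, and the characterization of lacunary hyperbolic groups given in \cite[Thm.~3.3]{OOS} yields that $\Gamma$ is lacunary hyperbolic. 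The main delicate point is to calibrate the extraction of $(l_k)$ so as to simultaneously produce the relation range $\sim I$ (depending only on (a)) and to meet the OOS criterion along a subsequence (from (b)); both are made possible precisely by the hypothesis $I\nsim\N$.
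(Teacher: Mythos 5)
Your proof is correct and takes essentially the same route as the paper: a small cancellation presentation over two generators whose relator lengths realize $I$, Bowditch's small cancellation lemma to identify the relation range with the set of relator lengths, and the Olshanskii--Osin--Sapir machinery for lacunary hyperbolicity. The only (inessential) difference is that the paper invokes OOS Proposition 3.12 directly, which covers sparse small cancellation presentations, whereas you re-derive that step from the direct-limit characterization (OOS Theorem 3.3) via your dyadic extraction of a subsequence with unbounded ratios.
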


\begin{proof}
Upon changing $I$ into some $I' \sim I$, we may easily find a presentation $\Gamma = \left\langle a,b \, | \, (r_n)_n\right\rangle$ so that $\{|r_n|: n \in \N\} = I'$ and satisfying the $C'(1/7)$ small cancelation condition. This last property means that $\{r_n: n \in \N\}$ is a set of reduced words stable under taking cyclic conjugates, and such that for every $n \in \N$ and every $m \neq n$, the largest common prefix of $r_n$ and $r_m$ has length at most $(1/7)|r_n|$. The $C'(1/7)$ condition implies on the one hand that the relation range of the group $\Gamma$ is given by the set of lengths of relators \cite[Lem.\ 5]{Bowditch-RR}, i.e.\ $\mathcal{R}(G) \sim I' \sim I$, and on the other hand that $\Gamma$ is lacunary hyperbolic since $I' \nsim \N$ \cite[Prop.\ 3.12]{OOS}.
\end{proof}

\subsubsection{Solvable groups} \label{subsec-abels}

If $G$ is a group and $\ell: G \rightarrow \R_+$ is a length function on $G$, we denote by $\mathcal{X}_{\ell}(G)$ the set of integers $n$ such that there is $g \in G$ such that $\ell(g) = n$ and $g$ does not belong to the subgroup of $G$ generated by elements of of length at most $n-1$. Note that if $Z$ is a central subgroup of a group $G$ and $S$ is a generating subset of $G$, then we have $\mathcal{X}_{\ell_S}(Z) = \mathcal{X}_S(Z,G)$ (see \S\ref{subsec-ext} for the definition of $\mathcal{X}_S(Z,G)$). 

The proof of the following lemma is routine, and we omit it.

\begin{lem} \label{lem-bilip-lengths}
If $\ell_1, \ell_2$ are bi-Lipschitz equivalent length functions on $G$, then $\mathcal{X}_{\ell_1}(G) \sim \mathcal{X}_{\ell_1}(G)$. 
\end{lem}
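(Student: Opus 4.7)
The plan is to unravel the definitions in terms of the group filtrations $H_t^{(i)} := \langle g \in G : \ell_i(g) \le t \rangle$ and to exploit the bi-Lipschitz hypothesis $C^{-1}\ell_1 \le \ell_2 \le C\ell_1$, for some $C \ge 1$, to compare them. Immediately one gets the inclusions $H_t^{(2)} \subseteq H_{Ct}^{(1)}$ and $H_t^{(1)} \subseteq H_{Ct}^{(2)}$ for every $t \ge 0$. By symmetry in $\ell_1, \ell_2$ it suffices to establish $\mathcal{X}_{\ell_1}(G) \preccurlyeq \mathcal{X}_{\ell_2}(G)$.

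Fix $n \in \mathcal{X}_{\ell_1}(G)$, witnessed by $g \in G$ with $\ell_1(g) = n$ and $g \notin H_{n-1}^{(1)}$. From $\ell_2(g) \le Cn$ we obtain $g \in H_{\lceil Cn \rceil}^{(2)}$; conversely, whenever $g \in H_t^{(2)}$ the second inclusion gives $g \in H_{Ct}^{(1)}$, and so $g \notin H_{n-1}^{(1)}$ forces $t > (n-1)/C$. Hence the smallest integer $m$ with $g \in H_m^{(2)}$ lies in an interval of the form $[c^{-1}n,\,cn]$ with $c$ depending only on $C$ (after absorbing finitely many small values of $n$ into the constant). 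By minimality of $m$, $H_m^{(2)} \supsetneq H_{m-1}^{(2)}$: in any decomposition $g = h_1 \cdots h_k$ with $\ell_2(h_i) \le m$, at least one factor $h_j$ must satisfy both $\ell_2(h_j) \in (m-1, m]$ and $h_j \notin H_{m-1}^{(2)}$, since the factors with $\ell_2(h_i) \le m-1$ lie in $H_{m-1}^{(2)}$ and cannot on their own produce $g$. This $h_j$ is exactly a witness that $m \in \mathcal{X}_{\ell_2}(G)$, and $m \asymp n$ as required.

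The only point demanding care is the reading of ``element of length $n$'' in the definition of $\mathcal{X}_{\ell}$ when $\ell$ is not integer-valued: the natural interpretation is that $n \in \mathcal{X}_\ell(G)$ iff the filtration $(H_k^{(\ell)})_{k \in \N}$ jumps at $k = n$, which for integer-valued $\ell$ (as in every application of the lemma in the paper) coincides with the literal statement. Once this convention is fixed, the symmetric argument gives $\mathcal{X}_{\ell_2}(G) \preccurlyeq \mathcal{X}_{\ell_1}(G)$ as well, hence $\mathcal{X}_{\ell_1}(G) \sim \mathcal{X}_{\ell_2}(G)$. I do not expect any substantive obstacle: the content of the proof is the monotonicity of the filtration $H_\bullet^{(\ell)}$ under bi-Lipschitz change of length function, and everything else is bookkeeping with the constant $C$.
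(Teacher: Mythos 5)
Your argument is correct and is exactly the routine filtration-comparison proof that the paper omits (it states only that "the proof is routine"): compare the subgroups generated by balls for the two lengths via the bi-Lipschitz constant, locate the first integer where $g$ enters the $\ell_2$-filtration, and extract a generator witnessing the jump. Your caveat about reading "$\ell(g)=n$" as a jump of the filtration is harmless and consistent with the paper, since in its only application (Proposition \ref{prop-RR-abels}) both length functions $\ell$ and $\ell_S$ on $Z_I$ are integer-valued, where the two readings coincide.
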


In the sequel we denote by $R$ the ring $\mathbf{F}_p[t,t^{-1},(t+1)^{-1}]$, and we consider Abels' group $A_4(R)$ consisting of upper triangular matrices of $\mathrm{GL}_4(R)$ whose first and last diagonal entries are equal to $1$ (see \cite{CT-Abels}).


For $i \geq 0$, we denote by $z_i \in A_4(R)$ the element whose diagonal entries are equal to $1$, and whose only non-zero off-diagonal entry has coordinate $(1,4)$ and is equal to $t^i$. We note that every $z_i$ lies in the center of $A_4(R)$. If $I \subset \N$, we let $Z_I = \left\langle z_i \right\rangle_{i \in I}$. The group $Z_I$ is abelian and isomorphic to the group $\mathbf{F}_p^{(I)}$. Every $z \in Z_I$ can be uniquely represented as $\sum_{k \geq 0} a_k t^k$, where all but finitely many $a_k \in \mathbf{F}_p$ are equal to zero. We consider the length function on $Z_I$ defined by $\ell(z) = \sup\left\{k : a_k \neq 0 \right\}$. 

\begin{lem} \label{lem-X-Z_I}
We have $\mathcal{X}_{\ell}(Z_I) = I$.
\end{lem}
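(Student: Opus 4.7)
The plan is to prove the two inclusions $\mathcal{X}_{\ell}(Z_I)\subseteq I$ and $I\subseteq \mathcal{X}_{\ell}(Z_I)$ separately; both are direct consequences of the fact that $Z_I\cong \mathbf{F}_p^{(I)}$ via the decomposition $z=\sum_{k\in I}a_k z_k$, together with the concrete description of the length function.

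First I would show $\mathcal{X}_{\ell}(Z_I)\subseteq I$. For any nonzero $z\in Z_I$, write $z=\sum_{k\in I}a_kz_k$ with $a_k\in\mathbf{F}_p$ almost all zero; then $a_k=0$ for every $k\notin I$, so $\ell(z)=\sup\{k:a_k\neq 0\}$ belongs to $I$. Hence no element of $Z_I$ has length equal to an integer $n\notin I$, so such an $n$ cannot lie in $\mathcal{X}_{\ell}(Z_I)$.

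For the reverse inclusion, fix $n\in I$; I claim that $g=z_n$ witnesses $n\in \mathcal{X}_{\ell}(Z_I)$. Clearly $\ell(z_n)=n$. The key observation is that the set $H_{n-1}=\{h\in Z_I:\ell(h)\le n-1\}$ is already a subgroup of $Z_I$: indeed, since $Z_I$ is abelian and $\ell$ measures the largest index of a nonzero coefficient, adding two elements of length at most $n-1$ yields an element whose coefficients at indices $k\ge n$ all vanish, hence of length at most $n-1$. Consequently the subgroup of $Z_I$ generated by elements of length at most $n-1$ is exactly $H_{n-1}$. But every element of $H_{n-1}$ has coefficient $0$ at index $n$, while $z_n$ has coefficient $1$ at index $n$; therefore $z_n\notin H_{n-1}$, which gives $n\in\mathcal{X}_{\ell}(Z_I)$.

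Combining the two inclusions gives $\mathcal{X}_{\ell}(Z_I)=I$. There is no substantive obstacle here: the argument reduces entirely to the fact that $\ell$ is the standard ``top-degree'' length on the direct sum $\mathbf{F}_p^{(I)}$, for which the sublevel sets are subgroups and the generators $z_n$ (for $n\in I$) have pairwise distinct lengths.
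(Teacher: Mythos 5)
Your proof is correct and follows essentially the same route as the paper: one inclusion because $\ell$ only takes values in $I$, the other by observing that $z_n$ has length $n$ but cannot lie in the subgroup generated by elements of smaller length (which, as you note, consists exactly of elements with vanishing coefficient at index $n$). Your version just makes explicit the observation that the sublevel sets are subgroups, which the paper leaves implicit.
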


\begin{proof}
We have $\ell(z_i) = i$ for every $i \in I$, and clearly $z_i$ does not belong to the subgroup generated by the $z_k$ for $k < i$. This shows $I \subset \mathcal{X}_{\ell}(Z_I)$, and the converse is clear since $\ell: Z_I \rightarrow \R_+$ take values in $I$.
\end{proof}

Recall that Grigorchuk gave, using growth exponents, a continuum of pairwise non quasi-isometric finitely generated groups of intermediate growth \cite{Grig-continuum}. In view of Corollary \ref{cor-qi-inv}, the following proposition provides a continuum of quasi-isometry classes of finitely generated solvable groups, a fact that was established in \cite{CT-Abels} using asymptotic cones. This was also obtained in \cite{Bri-Zhe} using compression of embeddings into $\mathbf{L}^p$-spaces. 

\begin{prop} \label{prop-RR-abels}
Let $I \subset \N$, and let $Q_I$ be the quotient of the group $A_4(R)$ by its central subgroup $Z_I$. Then $\mathcal{R}(Q_I) \sim I$.
\end{prop}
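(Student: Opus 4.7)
The strategy is to apply Corollary \ref{cor-central-ext-fp} to the central extension
\[ 1 \longrightarrow Z_I \longrightarrow A_4(R) \longrightarrow Q_I \longrightarrow 1, \]
and then reinterpret the resulting invariant via the explicit calculation of Lemma \ref{lem-X-Z_I}. First I would invoke the theorem of Abels that $A_4(R)$ is finitely presented, and fix a finite generating set $S$ of $A_4(R)$ with associated word length $\ell_S$. Since $Z_I$ is central and hence normal, Corollary \ref{cor-central-ext-fp} gives $\mathcal{R}(Q_I)\sim \mathcal{X}_S(Z_I,A_4(R))$; moreover, centrality of $Z_I$ collapses the normal-closure condition defining $\mathcal{X}_S(Z_I,A_4(R))$ to a plain subgroup-generation condition, so $\mathcal{X}_S(Z_I,A_4(R)) = \mathcal{X}_{\ell_S}(Z_I)$ as noted at the beginning of \S\ref{subsec-abels}.

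The heart of the proof is therefore to show that the restriction of $\ell_S$ to $Z_I$ is bi-Lipschitz equivalent to the degree length $\ell$ of the excerpt. For the upper bound $\ell_S \leq C\ell + C$, one uses that $A_4(R)$ contains a diagonal generator whose conjugation action on the $(1,4)$-root element $E_{14}(1)=z_0$ multiplies its entry by $t$; iterating $i$ times produces $z_i = E_{14}(t^i)$ as a word in $S$ of length $O(i)$. Writing a general $z = \sum_{i\leq n} a_i z_i \in Z_I$ (with $n = \ell(z)$) as a product of at most $p\cdot(n+1)$ such elements and summing lengths gives $\ell_S(z) \leq C\ell(z) + C$. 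The lower bound $\ell \leq C\ell_S + C$ is the standard observation that in any finitely generated subgroup of $\mathrm{GL}_n$ over a finitely generated commutative ring, the $t$-degrees of matrix entries grow at most linearly with the word length; one proves this by induction on word length, using that multiplying by an element of $S$ shifts the degree of each entry by a bounded amount.

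Granting this bi-Lipschitz equivalence, Lemma \ref{lem-bilip-lengths} and Lemma \ref{lem-X-Z_I} combine to give
\[ \mathcal{X}_{\ell_S}(Z_I) \sim \mathcal{X}_\ell(Z_I) = I, \]
which, chained with the preceding equivalences, yields $\mathcal{R}(Q_I)\sim I$ as required.

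The main obstacle is making the lower bound in the bi-Lipschitz step precise enough to apply Lemma \ref{lem-bilip-lengths}. Although the upper bound is a direct root-subgroup computation inside $A_4(R)$, the lower bound needs one to rule out, for every word of length $n$ in $S$, the possibility of producing a $(1,4)$-entry whose $t$-degree is superlinear in $n$. This is essentially an instance of the general principle that word length controls valuations in finitely generated matrix groups over finitely generated rings, but its clean invocation here requires either an explicit inductive control on each matrix entry under multiplication by a generator of $S$, or equivalently a valuation argument on $\mathbf{F}_p[t,t^{-1},(t+1)^{-1}]$ adapted to the specific generators chosen; once this is done, the rest of the argument consists of assembling already-established equivalences.
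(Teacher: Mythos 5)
Your overall skeleton is exactly the paper's: invoke Corollary \ref{cor-central-ext-fp} for the central extension $1\to Z_I\to A_4(R)\to Q_I\to 1$ (using finite presentability of $A_4(R)$), use centrality to replace $\mathcal{X}_S(Z_I,A_4(R))$ by $\mathcal{X}_{\ell_S}(Z_I)$, and conclude via Lemmas \ref{lem-bilip-lengths} and \ref{lem-X-Z_I}. The paper does not prove the bi-Lipschitz equivalence of $\ell_S$ and $\ell$ on $Z_I$ itself but cites \cite[\S 4.3]{CT-Abels} for it; you attempt to prove it, and that is where there is a genuine gap, in the upper bound $\ell_S\leq C\ell+C$.

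First, your mechanism for producing $z_i$ is wrong: there is no element of $A_4(R)$ whose conjugation action multiplies the $(1,4)$-entry by $t$. Diagonal matrices in $A_4(R)$ have first and last diagonal entries equal to $1$, so they fix every $E_{14}(r)$; indeed $E_{14}(R)$ is central, which is exactly why $Q_I$ behaves as it does -- if your conjugation existed, $Z_I$ would not be central and the statement itself would not make sense. The correct way to see $|z_i|_S=O(i)$ is via non-central root elements, e.g.\ $z_i=[E_{12}(1),E_{24}(t^i)]$, where $E_{24}(t^i)$ is the conjugate of $E_{24}(1)$ by the $i$-th power of $D=\mathrm{diag}(1,t,1,1)$. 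Second, and more seriously, even granting $|z_i|_S=O(i)$, writing a general $z=\sum_{i\le n}a_iz_i$ as a product of at most $p(n+1)$ such elements and ``summing lengths'' only yields $\ell_S(z)=O(n^2)$, not the linear bound you claim. A quadratic bound is not enough: Lemma \ref{lem-bilip-lengths} needs genuine bi-Lipschitz equivalence, and the $\sim$-class of an arbitrary $I$ (think $I=\{2^{k!}\}$) is not preserved under quadratic rescaling, so with your estimate the chain $\mathcal{X}_{\ell_S}(Z_I)\sim\mathcal{X}_{\ell}(Z_I)=I$ breaks down. To get the linear bound one needs a lamplighter-style telescoping: write $E_{14}(P)=[E_{12}(1),E_{24}(P)]$ and produce $E_{24}(P)$, $P=\sum_{i\le n}a_it^i$, as $E_{24}(a_0)\,D\,E_{24}(a_1)\,D\cdots E_{24}(a_n)\,D^{-n}$, which has length $O(n)$ since the same diagonal element $D$ is reused at each step. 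Your lower bound (degrees of matrix entries grow at most linearly in word length, by a valuation/degree induction over $R=\mathbf{F}_p[t,t^{-1},(t+1)^{-1}]$) is fine. With the upper bound repaired as above -- or simply outsourced to \cite[\S 4.3]{CT-Abels} as the paper does -- the rest of your argument goes through.
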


\begin{proof}
Let $S$ be a finite generating subset of the group $A_4(R)$. Since the group $A_4(R)$ is finitely presented \cite[Th.\ 5.1]{CT-Abels}, we have $\mathcal{R}_S(Q_I) = \mathcal{X}_S(Z_I,G)$ according to Corollary \ref{cor-central-ext-fp}. Now since $Z_I$ is central in $A_4(R)$, if follows that $\mathcal{X}_S(Z_I,G) = \mathcal{X}_{\ell_S}(Z_I) = \mathcal{X}_{\ell}(Z_I)$, where the last equality follows from Lemma \ref{lem-bilip-lengths} (since the metrics $\ell$ and $\ell_S$ are bi-Lipschitz equivalent on $Z_I$, see for instance \cite[$\S$4.3]{CT-Abels}). Therefore $\mathcal{R}_S(Q_I) = \mathcal{X}_{\ell}(Z_I)$, and the conclusion then follows from Lemma \ref{lem-X-Z_I}.
\end{proof}

\begin{rem}\label{abelscon}
This completes a remark in the introduction. In \cite{BCGS}, the class of extrinsic condensation group is studied: these are finitely generated groups $G$ such that for every finitely presented group $H$ and surjective homomorphism $p:H\to G$, the kernel of $p$ contains uncountably many normal subgroups of $H$. This property is a strengthening of being infinitely presented but is not a quasi-isometry invariant, and is actually not even closed under taking finite index overgroups.

For instance, if $B$ is Abels' group (denoted $A_4/Z$ in in \cite[Example 5.13]{BCGS}); here it is rather, for some prime $p$, the quotient of $A_4(\Z[1/p])$ by its center $Z\simeq \Z[1/p]$, then $B\times B$ is an extrinsic condensation group but its overgroup of index 2 $B\wr \Z/2\Z$ is not.
\end{rem}

\subsection{Graph products and wreath products}

We will make use of the following easy lemma.

\begin{lem} \label{lem-retract-sys-gen}
Let $G$ be a group, $H$ a retract of $G$ and $\varphi: G \twoheadrightarrow H$ be a homomorphism whose restriction to $H$ is the identity. Assume that $S_G$ is a generating subset of $G$, and write $S_H = \varphi(S_G)$. Then $\mathcal{R}_{S_H}(H) \subset \mathcal{R}_{S_G}(G)$.
\end{lem}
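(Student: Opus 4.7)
The plan is to prove the inclusion by contrapositive, with the main tool being the surjective, length-non-increasing homomorphism $\tilde\varphi:F_{S_G}\to F_{S_H}$ obtained by extending the set map $\varphi|_{S_G}:S_G\to S_H$ on letters. Since $\pi_H\circ\tilde\varphi=\varphi\circ\pi_G$, this map carries relations in $G$ to relations in $H$, yielding $\tilde\varphi(N_{G,k})\subseteq N_{H,k}$ for every $k\ge 0$, where $N_{G,k}$ (respectively $N_{H,k}$) denotes the normal subgroup of $F_{S_G}$ (respectively $F_{S_H}$) generated by the relations of length at most $k$.

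The key step will be to construct, for every relation $r\in\ker\pi_H$ of length $\le n$, a length-preserving lift $r^{*}\in F_{S_G}$ that is itself a relation in $G$. Writing $r=h_1\cdots h_k$ with $h_i\in S_H^{\pm 1}$, the equality $S_H=\varphi(S_G)$ provides for each letter a preimage $g_i\in S_G^{\pm 1}$ with $\varphi(g_i)=h_i$, so $r^{*}=g_1\cdots g_k$ has length $k$ and satisfies $\tilde\varphi(r^{*})=r$. To guarantee $\pi_G(r^{*})=1$, I will exploit that $\varphi|_H=\mathrm{id}$: in the natural setting where the preimages $g_i$ can be chosen inside $H$ (as is the case when $S_H\subseteq S_G$), taking $g_i=h_i$ forces $\pi_G(r^{*})$ to equal $\pi_H(r)=1$ via the inclusion $H\hookrightarrow G$.

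With this lift in hand, the remainder of the argument is a routine diagram chase. Fix $n\in\mathcal{R}_{S_H}(H)$ and a witness $r\in\ker\pi_H\smallsetminus N_{H,n-1}$ with $|r|_{S_H}\le n$, together with its lift $r^{*}\in\ker\pi_G$ of length $\le n$. Assume for contradiction $n\notin\mathcal{R}_{S_G}(G)$, so $N_{G,n}=N_{G,n-1}$. Then $r^{*}$ lies in $N_{G,n-1}$ and decomposes as $r^{*}=\prod_i\alpha_i\,r_i\,\alpha_i^{-1}$ with $\alpha_i\in F_{S_G}$ and $r_i\in\ker\pi_G$ of length $\le n-1$. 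Applying $\tilde\varphi$ gives
\[
r=\tilde\varphi(r^{*})=\prod_i\tilde\varphi(\alpha_i)\,\tilde\varphi(r_i)\,\tilde\varphi(\alpha_i)^{-1},
\]
expressing $r$ as a product of conjugates of relations in $H$ of length $\le n-1$, i.e.\ $r\in N_{H,n-1}$, contradicting the choice of $r$. Hence $n\in\mathcal{R}_{S_G}(G)$.

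The main obstacle is ensuring that the lift $r^{*}$ can be taken simultaneously length-preserving and a relation in $G$; once this is in place, everything else follows formally from the naturality of $\tilde\varphi$ with respect to the filtrations $(N_{G,k})$ and $(N_{H,k})$.
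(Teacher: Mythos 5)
Your diagram chase is the same as the paper's argument: push a decomposition of the lifted relation through the letter-wise map $\tilde\varphi\colon F_{S_G}\to F_{S_H}$, using $\pi_H\circ\tilde\varphi=\varphi\circ\pi_G$ and the fact that $\tilde\varphi$ does not increase length. You have also correctly isolated the one non-formal point: the witness relation $r$ of $H$ of length at most $n$ must be realized by a word $r^{*}$ over $S_G$ of length at most $n$ that is itself a relation of $G$. However, you only secure this under an extra assumption (``when the preimages can be chosen inside $H$, as is the case when $S_H\subseteq S_G$''), which is not among the stated hypotheses, so as a proof of the lemma as stated this is a gap. Moreover, it is a gap that cannot be closed in that generality: with only $S_H=\varphi(S_G)$, no choice of letter-wise preimages need produce a relation of $G$, and the literal inclusion can in fact fail. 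Take $G$ free on $S_G=\{x,y\}$, $H=\langle x\rangle$, and $\varphi(x)=x$, $\varphi(y)=x^{2}$; then $S_H=\{x,x^{2}\}$, the length-$3$ word $x\,x\,(x^{2})^{-1}$ in $F_{S_H}$ is a relation of $H$ not generated by shorter ones, so $3\in\mathcal{R}_{S_H}(H)$, while $\mathcal{R}_{S_G}(G)$ contains no $n\ge 1$ because $F_{S_G}\to G$ is injective.

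That said, your hedge is pointing at an imprecision in the statement rather than at a defect peculiar to your argument: the paper's two-line proof makes the same silent identification, treating the $S_H$-word $w$ both as an $S_G$-word and as a relation of $G$, which is only legitimate when $S_H\subseteq S_G$ (with generators killed by $\varphi$ sent to the empty word when defining $\tilde\varphi$). That is exactly the situation in which the lemma is applied in the proof of Lemma \ref{lem-graph-prod}, where $S_H=S_v\subseteq S_G$ and $\varphi(S_G)\subseteq S_H\cup\{1\}$; under such a hypothesis your choice $g_i=h_i$ makes $r^{*}$ a genuine relation of $G$ of the same length, and your proof is then complete and coincides with the intended one. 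For a general retract and general compatible generating sets, what survives is the inclusion at the level of $\sim$-classes, which is exactly what Theorem \ref{thm-qi-inv} and Corollary \ref{cor-qi-inv} provide.
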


\begin{proof}
Let $w \in F_{S_H}$ be a relation in the group $H$. If $w$ admits a decomposition in $F_{S_G}$ as a product of conjugates of elements of length $\leq n$, then by replacing each letter by its image by $\varphi$ we obtain a decomposition of $w$ in $F_{S_H}$ as a product of conjugates of elements of length $\leq n$. It follows that if $w$ is not generated by relations of smaller length in $F_{S_H}$, then the same holds in $F_{S_G}$, and the result is proved.
\end{proof}

We derive the following consequence about group retracts, which also follows from Theorem \ref{thm-qi-inv}.

\begin{cor} \label{cor-retract}
Let $G=N\rtimes Q$ be a semidirect product decomposition of locally compact groups, with $G,Q$ compactly generated. Then $\mathcal{R}(Q)\subset\mathcal{R}(G)$. In particular, if $G$ is lacunary presented then so is $Q$ (or equivalently, if $Q$ is densely related then so is $G$).
\end{cor}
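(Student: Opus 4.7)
The plan is to reduce immediately to Lemma~\ref{lem-retract-sys-gen} (equivalently, to invoke Theorem~\ref{thm-qi-inv} via the general principle that a group retract is a large-scale Lipschitz retract). The semidirect product decomposition $G = N \rtimes Q$ provides a canonical continuous surjective homomorphism $\varphi \colon G \twoheadrightarrow Q$ with kernel $N$, whose restriction to the subgroup $Q \le G$ is the identity. Thus $Q$ is literally a group retract of $G$ in the sense of Lemma~\ref{lem-retract-sys-gen}.

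First I would fix a compact generating subset $S_G$ of $G$ (which exists by assumption) and set $S_Q = \varphi(S_G)$. Since $\varphi$ is continuous, $S_Q$ is compact in $Q$; since $\varphi$ is surjective and restricts to the identity on $Q$, the set $S_Q$ generates $Q$. Lemma~\ref{lem-retract-sys-gen} then directly yields the inclusion $\mathcal{R}_{S_Q}(Q) \subset \mathcal{R}_{S_G}(G)$ at the level of subsets of $\N$. Passing to $\sim$-classes and invoking Corollary~\ref{cor-qi-inv} (which guarantees that the class $\mathcal{R}(\cdot)$ is independent of the chosen compact generating subset), this gives the desired inclusion $\mathcal{R}(Q) \subset \mathcal{R}(G)$.

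For the ``in particular'' part, I would argue by contrapositive on the equivalent formulation. Suppose $Q$ is densely related, i.e.\ $\mathcal{R}(Q) \sim \N$. Since $\mathcal{R}(Q) \subset \mathcal{R}(G)$ as $\sim$-classes, the class $\mathcal{R}(G)$ contains a representative that contains $\N$ up to finite multiplicative Hausdorff distance, forcing $\mathcal{R}(G) \sim \N$, i.e.\ $G$ is densely related. Equivalently, if $G$ is lacunary presented then $Q$ must be lacunary presented.

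I do not foresee any genuine obstacle here: the main content has already been packaged into Lemma~\ref{lem-retract-sys-gen} and Corollary~\ref{cor-qi-inv}, and the only point to verify is the compatibility of the compact generating sets under $\varphi$, which is immediate from continuity and the fact that $\varphi|_Q = \mathrm{id}_Q$. One could alternatively phrase the argument purely through Theorem~\ref{thm-qi-inv} by noting that the pair of continuous homomorphisms $Q \hookrightarrow G \xrightarrow{\varphi} Q$ realises $Q$ as an LS-Lipschitz retract of $G$ once compact generating subsets (and hence word metrics) are fixed, but the combinatorial route via Lemma~\ref{lem-retract-sys-gen} is slightly more direct.
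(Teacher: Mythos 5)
Your proposal is correct and follows exactly the route the paper intends: the projection $G\twoheadrightarrow Q$ with kernel $N$ restricts to the identity on $Q$, so Lemma~\ref{lem-retract-sys-gen} applied to $S_Q=\varphi(S_G)$ gives $\mathcal{R}_{S_Q}(Q)\subset\mathcal{R}_{S_G}(G)$, and Corollary~\ref{cor-qi-inv} lets you pass to $\sim$-classes; the paper also notes the alternative via Theorem~\ref{thm-qi-inv}, just as you do. No gaps.
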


Let $X = (V,E)$ be a graph, and let $(G_v)_{v \in V}$ be a family of groups indexed by the set of vertices of $X$. Recall that the graph product $P$ associated to this data is the quotient of the free product of all $G_v$ by the relations $[G_v,G_w]=1$ whenever $(v,w)$ is an edge of $X$. 

\begin{lem} \label{lem-graph-prod}
Assume that $S_v$ is a generating subset of $G_v$ for every $v \in V$, and let $S$ be the disjoint union of all $S_v$. Then \[ \mathcal{R}_S(P) = \bigcup_{v \in V} \mathcal{R}_{S_v}(G_v). \]
\end{lem}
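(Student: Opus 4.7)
The inclusion $\bigcup_{v \in V} \mathcal{R}_{S_v}(G_v) \subseteq \mathcal{R}_S(P)$ follows immediately from Lemma~\ref{lem-retract-sys-gen}. For each vertex $v$, the natural retraction $\pi_v : P \twoheadrightarrow G_v$ that sends every factor $G_{v'}$ with $v' \neq v$ to the identity maps $S$ onto $S_v \cup \{1\}$, and since adjoining the identity to the generating set does not affect the relation range, Lemma~\ref{lem-retract-sys-gen} gives $\mathcal{R}_{S_v}(G_v) \subseteq \mathcal{R}_S(P)$ for each $v$. Taking the union over $v$ yields the first inclusion.

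For the reverse inclusion, denote $N = \ker(F_S \to P)$, $N_v = \ker(F_{S_v} \to G_v)$, and let $C = \{[s,t] : s \in S_v^{\pm 1},\, t \in S_{v'}^{\pm 1},\, (v,v') \in E\}$ be the set of commutators of length $4$ arising from the commutation relators of the graph product. Then $N$ is the normal closure in $F_S$ of $\bigl(\bigcup_v N_v\bigr) \cup C$. The plan is to establish the quantitative refinement: for every $n \geq 1$, the normal closure in $F_S$ of the elements of $N$ of length at most $n$ coincides with the normal closure of $\bigl(\bigcup_v \{r \in N_v : |r| \leq n\}\bigr) \cup C$. Granting this, and noting that for $n \geq 5$ the commutators of $C$ already lie in the $(n-1)$-ball, any $n \in \mathcal{R}_S(P)$ with $n \geq 5$ is witnessed by a new element of some $N_v$ of length exactly $n$ that is not in the normal closure of shorter elements of $N_v$, giving $n \in \mathcal{R}_{S_v}(G_v)$; the remaining values $n \leq 4$ are handled by direct verification (a new relation in $P$ at such lengths is either a letter that becomes trivial in some $G_v$, a length-$2$ or $3$ relation that is forced to lie in some $F_{S_v}$, or a commutator from $C$).

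The main obstacle is the quantitative claim above, which I would prove using a piling / normal-form algorithm for graph products: a word $w \in N$ of length at most $n$ can be reduced to the empty word via a sequence of elementary moves of two kinds, namely (i) commutations $s t \leftrightarrow t s$ for an edge $(v,v') \in E$ with $s \in S_v^{\pm 1}$, $t \in S_{v'}^{\pm 1}$, each applying a relator from $C$, and (ii) replacements of a maximal ``$v$-block'' of the current word by an equivalent word in $G_v$, each applying a relator from $N_v$. The delicate technical step is to arrange this reduction so that all intermediate words have length at most $n$ and every $v$-block encountered along the way has length at most $n$; this ensures that each relator invoked in a type-(ii) move belongs to $N_v$ and has length at most $n$, yielding the desired normal-closure decomposition of $w$.
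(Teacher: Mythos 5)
Your proposal is correct and takes essentially the same route as the paper: the inclusion $\supseteq$ via Lemma \ref{lem-retract-sys-gen}, and the converse by writing any relation of the graph product as a product of conjugates of the length-$4$ commutators and of factor relations of controlled length --- the quantitative claim you isolate is precisely the content of the one-line ``Remark'' the paper leaves unproved, and your normal-form reduction (shuffles, merges of same-factor blocks, and deletions of blocks that are trivial in their factor) is the standard way to justify it. Minor points only: your normal-closure identity should be stated for $n\ge 4$ (the commutators have length $4$); the type-(ii) moves should be restricted to deletions of trivial blocks, since replacing a block by an equivalent word invokes a relator of length equal to block plus replacement, which need not be $\le n$; and at $n=4$ the literal set equality can genuinely fail (e.g.\ $P=\Z\times\Z$ on one generator per factor has $4\in\mathcal{R}_S(P)$ while both factors contribute nothing), a boundary case the paper's own proof also ignores and which is immaterial up to $\sim$.
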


\begin{proof}
Each $G_v$ is a retract of $P$, so by Lemma \ref{lem-retract-sys-gen} we have $\mathcal{R}_{S_v}(G_v) \subset \mathcal{R}_S(P)$ for every $v \in V$. Now let $w$ be a word in the elements of $S$ of length $\geq 5$, such that $w$ is a relation in $P$ that is not generated by relations of smaller length. Remark that in $P$ every relation that is not generated by relations of length at most $4$ comes from relations in the $G_v$. Since moreover $w$ is not generated by relations of smaller length $w$ must be a relation in one of the $G_v$, which proves the converse inclusion.
\end{proof}

\begin{cor}\label{stability}
Let a compactly generated locally compact group $G$ be a graph product of finitely many groups $G_1, \dots, G_n$. Then $\mathcal{R}(G)\sim\bigcup\mathcal{R}(G_i)$. In particular, if all $G_i$ are lacunary presented then so is $G$. In particular again, the class of lacunary presented groups is closed under taking direct or free products with compactly presented locally compact groups, and under taking direct or free powers ($G\mapsto G^k$ or $G^{\ast k}$).
\end{cor}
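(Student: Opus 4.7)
The plan is to deduce Corollary \ref{stability} as a direct consequence of Lemma \ref{lem-graph-prod}. The first step is to produce a suitable compact generating subset of the graph product $G$. In any graph product, each factor $G_i$ is a retract of $G$ via the canonical homomorphism $G\twoheadrightarrow G_i$ that kills the other factors, so $G_i$ is itself compactly generated: the image of a compact generating set of $G$ under this retraction generates $G_i$. I would pick a compact generating subset $S_i$ of each $G_i$ and form $S=\bigsqcup_i S_i$, which is a compact generating subset of $G$ since the $G_i$ collectively generate $G$ by definition of the graph product.

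Next, I would invoke Lemma \ref{lem-graph-prod} to obtain the set-theoretic equality $\mathcal{R}_S(G)=\bigcup_i\mathcal{R}_{S_i}(G_i)$. The combinatorial argument from that lemma applies without modification in the locally compact setting, since it only uses that each $G_i$ is a retract and that relations of length at least five in a graph product cannot all originate from the commutation edges of the graph. Passing to $\sim$-equivalence classes then yields the main assertion $\mathcal{R}(G)\sim\bigcup_i\mathcal{R}(G_i)$, provided one checks that the $\sim$-class is well-defined under finite unions: if $A_i\sim A_i'$ for $i=1,\dots,n$, then $\bigcup_iA_i\sim\bigcup_iA_i'$, which is an immediate unpacking of the definition of $\preccurlyeq$.

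With the main equivalence in hand, the remaining assertions of the corollary follow by simple manipulations of $\sim$-classes. If each $\mathcal{R}(G_i)\nsim\N$, then the finite union $\bigcup_i\mathcal{R}(G_i)\nsim\N$, and hence $G$ is lacunary presented; verifying this finite-union stability for lacunarity is the most delicate point and would be the main obstacle to confront carefully. For the closure properties, when $H$ is compactly presented the set $\mathcal{R}(H)$ is finite, so $\mathcal{R}(G\times H)\sim\mathcal{R}(G)\cup\mathcal{R}(H)\sim\mathcal{R}(G)$ because adjoining a finite subset to a representative leaves the $\sim$-class unchanged; the same argument handles $G\ast H$. Finally, for direct and free powers, $\mathcal{R}(G^k)\sim\mathcal{R}(G^{\ast k})\sim\mathcal{R}(G)\cup\cdots\cup\mathcal{R}(G)=\mathcal{R}(G)$, so lacunary presentability is preserved in these cases without any further argument.
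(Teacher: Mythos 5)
Your overall route is the paper's: choose a compact generating set $S_i$ of each $G_i$ (these are compactly generated, as is implicit in the statement since $\mathcal{R}(G_i)$ must be defined), take $S=\bigsqcup_i S_i$, apply Lemma \ref{lem-graph-prod} to get $\mathcal{R}_S(G)=\bigcup_i\mathcal{R}_{S_i}(G_i)$, and pass to $\sim$-classes using the compatibility of $\sim$ with finite unions. That part, as well as your treatment of the final sentence (products with compactly presented groups, since adjoining a finite set does not change the $\sim$-class, and powers, since $\mathcal{R}(G)\cup\dots\cup\mathcal{R}(G)=\mathcal{R}(G)$), is correct and is exactly how the corollary is meant to follow from the lemma.

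The genuine gap is the step you yourself flag as ``the most delicate point'': the claim that if $\mathcal{R}(G_i)\nsim\N$ for all $i$ then $\bigcup_i\mathcal{R}(G_i)\nsim\N$. This is not merely delicate, it is false. Take $A_1=\bigcup_{n\ge 1}[(2n)!,(2n+1)!]$ (intervals of integers) and $A_2=\N\smallsetminus A_1$: neither $A_1$ nor $A_2$ is $\sim\N$, since each has gaps of unbounded multiplicative size, yet $A_1\cup A_2=\N$. Both sets are realized as relation ranges of finitely generated groups, and the Remark immediately following Corollary \ref{stability} uses precisely this, together with the corollary, to show that the class of lacunary presented groups is \emph{not} closed under direct or free products ($\Gamma_1\times\Gamma_2$ and $\Gamma_1\ast\Gamma_2$ are densely related there). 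So the implication you are trying to prove in that sentence cannot be obtained from the displayed equivalence, nor by any other means: as literally written it conflicts with the paper's own remark (it appears to be a slip in the statement). What does follow in full generality from $\mathcal{R}(G)\sim\bigcup_i\mathcal{R}(G_i)$ is: if all $G_i$ are compactly presented then so is $G$; if some $G_i$ is densely related then so is $G$ (monotonicity of $\preccurlyeq$ in its second argument); and lacunary presentability passes to $G$ only when all but at most one factor is compactly presented, which is exactly the closure statement of the final sentence that you prove correctly. You should therefore not attempt to patch the finite-union stability claim, but rather note that it fails and restrict the deduction accordingly.
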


\begin{rem}
Similarly to what happen for lacunary hyperbolic groups \cite[Example 3.16]{OOS}, the class of lacunary presented groups is not closed under direct or free products. For instance, consider the subset $A_1=\bigcup_{n\ge 1}[(2n)!,(2n+1)!]$ (where intervals are understood to be within integers), and let $A_2$ be the complement of $A_1$ in $\N$. Then there exist finitely generated groups $\Gamma_1,\Gamma_2$ with $\mathcal{R}(\Gamma_i)\sim A_i$ (see the introduction, or \S\ref{subsec-abels}). Then both $\Gamma_1$ and $\Gamma_2$ are lacunary presented, but using Corollary \ref{stability}, $\Gamma_1\times\Gamma_2$ and $\Gamma_1\ast\Gamma_2$ are densely related. 
\end{rem}

Recall that a standard wreath product $H \wr G$, with $H$ non-trivial and $G$ infinite, is never finitely presented \cite{Bau-61}. The following strengthens this result by showing that such groups are actually densely related.

\begin{prop} \label{prop-wreath-dr}
Let $G,H$ be finitely generated groups such that $H$ is non-trivial and $G$ is infinite. Then the standard wreath product $H \wr G$ is densely related.
\end{prop}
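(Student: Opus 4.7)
The plan is to exhibit, for every integer $n \geq 1$, an explicit relation $r_n \in F_S$ of length $4n+4$ in $H \wr G$ that does not lie in the normal subgroup $N_{n-1}$ of $F_S$ generated by relations of length $\leq n-1$; granting this, the chain $(N_k)_{k \geq 0}$ must strictly increase somewhere in $[n, 4n+4]$, producing an element of $\mathcal{R}_S(H \wr G)$ in an interval of bounded ratio, and since $n$ is arbitrary this yields $\mathcal{R}_S(H \wr G) \sim \N$. Throughout, $S := S_H \sqcup S_G$ for arbitrary finite generating sets $S_H \subseteq H$ and $S_G \subseteq G$.

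Fix a non-trivial $h \in S_H$. Since $G$ is infinite and finitely generated, for each $n \geq 1$ there exists $g_n \in G$ with $|g_n|_{S_G} = n$; let $w_n \in F_{S_G}$ be a geodesic word representing $g_n$. I would set
\[ r_n := [h,\, w_n h w_n^{-1}] \in F_S, \]
of length $4n+4$. This is a relation in $H \wr G$ because in the base group $H^{(G)} = \bigoplus_{g \in G} H_g$ the copies of $h$ at positions $1$ and $g_n$ lie in distinct direct summands (as $g_n \neq 1$) and hence commute.

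To show $r_n \notin N_{n-1}$, the plan is to construct an auxiliary group $\tilde{\Gamma}$ together with a homomorphism $\psi : F_S \to \tilde{\Gamma}$ killing $N_{n-1}$ but sending $r_n$ to a non-trivial element. Let $\tilde{P}$ be the graph product of a family $(H_g)_{g \in G}$ of copies of $H$ over the graph with vertex set $G$ and edges $\{g, g'\}$ exactly when $0 < d_{S_G}(g,g') \leq n-1$. This graph is locally finite because $G$ is finitely generated, and left-multiplication by $G$ preserves its edge set by left-invariance of the word metric; the induced $G$-action on $\tilde{P}$ permutes the vertex groups. Set $\tilde{\Gamma} := \tilde{P} \rtimes G$, and let $\psi$ send $S_H$ into $H_1$ and $S_G$ into $G$ in the canonical way.

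Two claims then remain to be verified. First, $\psi$ kills every relation $w \in F_S$ of length $k \leq n-1$: tracing $w$ in $\tilde{\Gamma}$, the $G$-part returns to $1$ and the visited $G$-positions all lie within $d_{S_G}$-distance $k/2$ of $1$, so any two of them are at mutual $G$-distance $\leq k \leq n-1$; thus the lamp-contributions produced by the $S_H$-letters of $w$ lie in vertex groups that pairwise commute in $\tilde{P}$, can be regrouped per position, and the resulting element is trivial if and only if at each coordinate $g$ the contributions cancel in $H_g$, which is precisely the condition $w = 1$ in $H \wr G$. Second, $\psi(r_n) \neq 1$: this element is the commutator in $\tilde{P}$ of the non-trivial elements $h \in H_1$ and $g_n h g_n^{-1} \in H_{g_n}$, and by the normal form theorem for graph products it is non-trivial since $\{1, g_n\}$ is not an edge of the defining graph (as $d_{S_G}(1, g_n) = n > n - 1$). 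The first claim is the delicate step and rests on the key observation that short loops in $\mathrm{Cay}(H \wr G, S)$ have small $G$-footprint, so that only short-range commutations of lamps are needed to fill them; the second follows from standard properties of graph products.
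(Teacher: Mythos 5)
Your proof is correct and takes essentially the same route as the paper: your auxiliary group $\tilde{\Gamma}$ is exactly the paper's $\Gamma_{n-1}=P_{n-1}\rtimes G$ (the graph product of copies of $H$ over the distance-$\le n-1$ graph on $G$, semidirected with $G$), and the test relation $[h,w_n h w_n^{-1}]$ of length $4n+4$, shown non-trivial there via the graph-product normal form, is the same. The only difference is quantitative: the paper remarks that every relation of length $\le 4n$ already holds in $\Gamma_{n-1}$, locating a new relation in $[4n,4n+4]$, whereas you prove in detail the weaker statement that relations of length $\le n-1$ die under $\psi$ (via the small $G$-footprint of short loops), which places a new relation only in $[n,4n+4]$ --- a worse constant, but still sufficient for dense relatedness.
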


\begin{proof}
Let $S$ be the union of finite generating subsets $S_G$ and $S_H$ of respectively $G$ and $H$. For every $n \geq 0$, let us consider the graph structure $X_n$ on $G$ defined by putting an edge between $g_1$ and $g_2 \neq g_1$ if and only if $d_{S_G}(g_1,g_2) \leq n$. We denote by $P_n$ the graph product associated to $X_n$ for which all groups are equal to $H$. Since the action of $G$ on itself preserves the graph structure $X_n$, we can consider the semidirect product $\Gamma_n = P_n \rtimes G$. By construction we have a surjective homomorphism $\Gamma_{n} \twoheadrightarrow \Gamma_{n+1}$ for every $n \geq 0$. Since $(X_n)$ converges to the complete graph on $G$, the direct limit of the sequence $(\Gamma_n)$ is exactly the wreath product $H \wr G$.

Fix a non-trivial $s \in S_H$. For every $n \geq 1$, we choose some element $g_n \in G$ whose length with respect to $S_G$ is exactly $n$ (such an element always exists because $S_G$ is finite and $G$ is infinite). Choose a word $v_n$ in the elements of $S_G$ of length $n$ representing $g_n$ in the group $G$, and write $w_n = [v_n s v_n^{-1},s]$. By construction $w_n$ is a relation of length $4n+4$ in $H \wr G$. Since there is no edge in $X_{n-1}$ between $1_G$ and $g_n$, the elements $g_n s g_n^{-1}$ and $s$ generate their free product in $P_{n-1}$ \cite[Lem.\ 2.3 (2)]{Cor-wp}. In particular the word $w_n$ is not trivial in $\Gamma_{n-1}$. Now by remarking that every relation in $H \wr G$ of length at most $4n$ is a relation in $\Gamma_{n-1}$, we obtain that $w_n$ cannot be generated by relations of length $\leq 4n$. Therefore $\mathcal{R}_S(H \wr G)$ contains an element between $4n$ and $4n+4$, and the proof is complete.
\end{proof}

\subsection{Iterations of endomorphisms} \label{subsec-non-hopf}

Following \cite{Grig-Mama}, we consider the following situation. Let $G$ be a group with a finite index normal subgroup $H$, and denote by $X = \left\{x_1,\ldots,x_r\right\}$ a system of coset representatives, so that $G = x_1 H \cup \ldots \cup x_r H$. Assume we are given a set $\Phi = \left\{\varphi_i\right\}$ of surjective homomorphisms \[ \varphi_i: H \rightarrow G, \, \text{for} \, \, i=1,\ldots,r,\] with the following properties:

\begin{enumerate}
	\item \label{hyp1} $\bigcap_{i=1}^r \ker(\varphi_i) \neq 1$.
	\item \label{hyp2} for every $i,j \in \left\{1,\ldots,r\right\}$, there exists $k \in \left\{1,\ldots,r\right\}$ such that \[\varphi_i \circ \mu_{x_j} = \varphi_k,\] where $\mu_{x_j}: H \rightarrow H$ is the automorphism of $H$ induced by the conjugation by $x_j$.
	\item \label{hyp3} The image of $H \rightarrow G^r$, $h \mapsto (\varphi_1(h),\ldots,\varphi_r(h))$, contains the diagonal $\Delta(G)$ in $G^r$.
\end{enumerate}


\begin{rem}
It would actually be enough for our purpose that the image of $H \rightarrow G^r$ contains the diagonal $\Delta(H)$ in $H^r \leq G^r$.
\end{rem}

From now on we fix $G$, $H$ and $\Phi$ satisfying (1),(2) and (3). We define inductively a sequence of subgroups by \[ K_0 = 1, \, \text{and} \, \, K_{n+1} = \bigcap_{i=1}^r \varphi_i^{-1}(K_n) \] for all $n \geq 0$.

\begin{lem} \label{lem-seq-Kn}
$K_n \leq H$ is a normal subgroup of $G$, and $K_n$ is properly contained in $K_{n+1}$ for all $n \geq 0$.
\end{lem}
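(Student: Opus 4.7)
The plan is to run a single induction on $n$ that simultaneously establishes $K_n\le H$, normality of $K_n$ in $G$, the inclusion $K_n\subseteq K_{n+1}$, and its strictness. That $K_n\le H$ is built into the definition: each $\varphi_i^{-1}(K_{n-1})$ is a subgroup of the domain $H$, hence so is their intersection $K_n$.

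For normality of $K_{n+1}$ in $G$, assuming inductively $K_n\lhd G$, I would first check normality in $H$: for $h\in H$ and $k\in K_{n+1}$, $\varphi_i(hkh^{-1})=\varphi_i(h)\varphi_i(k)\varphi_i(h)^{-1}\in K_n$ since $\varphi_i(k)\in K_n$ and $K_n$ is normal in $G$. To promote $H$-invariance to $G$-invariance it suffices to conjugate by the coset representatives $x_j$: for $k\in K_{n+1}\subseteq H$, the element $x_jkx_j^{-1}=\mu_{x_j}(k)$ lies in $H$, and hypothesis (2) gives $\varphi_i(\mu_{x_j}(k))=\varphi_m(k)\in K_n$ for some index $m=m(i,j)$, whence $\mu_{x_j}(k)\in K_{n+1}$. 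This step is the only place where hypothesis (2) is used, and it is exactly what the hypothesis was designed for.

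For the chain of strict inclusions, the non-strict inclusion $K_n\subseteq K_{n+1}$ follows inductively: once $K_{n-1}\subseteq K_n$, any $k\in K_n$ satisfies $\varphi_i(k)\in K_{n-1}\subseteq K_n$ for all $i$, so $k\in K_{n+1}$. The base case of strictness, $K_0=1\subsetneq K_1$, is precisely hypothesis (1), since $K_1=\bigcap_i\ker(\varphi_i)$. The inductive step, which is the real content of the lemma, uses (3) to lift a witness of non-triviality: given $y\in K_{n+1}\smallsetminus K_n$, hypothesis (3) applied to the diagonal element $(y,\dots,y)\in\Delta(G)$ produces $h\in H$ with $\varphi_i(h)=y$ for every $i$. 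Then $\varphi_i(h)=y\in K_{n+1}$ forces $h\in K_{n+2}$, while $h\in K_{n+1}$ would force $y=\varphi_i(h)\in K_n$, contradicting the choice of $y$. No step is a genuine obstacle; the argument is a transparent piece of book-keeping in which hypothesis (2) supplies $G$-normality and hypothesis (3) supplies the diagonal lift that makes the sequence strictly increase.
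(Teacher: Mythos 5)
Your proof is correct and follows essentially the same route as the paper: hypothesis (2) handles conjugation via the decomposition $g = x_j h$ to get normality in $G$, and hypothesis (3) supplies the diagonal lift that forces strictness. The only cosmetic difference is that you prove strictness by an upward induction lifting a witness from $K_{n+1}\smallsetminus K_n$ to $K_{n+2}\smallsetminus K_{n+1}$, whereas the paper runs the same lifting argument as a descent by contradiction down to the base case given by hypothesis (1); the content is identical.
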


\begin{proof}
That $K_n$ is a subgroup of $H$ and $K_n \leq K_{n+1}$ follow by an easy induction. Let us check that $K_n$ is normal in $G$. We also proceed by induction, the case $n=0$ being trivial. Assume that $K_n$ is normal in $G$, and choose $h_{n+1} \in K_{n+1}$ and $g \in G$. Given $i \in \left\{1,\ldots,r\right\}$, we want to see that $\varphi_i(g h_{n+1} g^{-1}) \in K_n$. Let $j$ and $h \in H$ be such that $g = x_j h$. We have: \[ \varphi_i(g h_{n+1} g^{-1}) = \varphi_i \circ \mu_{x_j} (h h_{n+1} h^{-1}) = \varphi_k(h h_{n+1} h^{-1}),\] where the integer $k$ is provided by (2). Therefore we have \[ \varphi_i(g h_{n+1} g^{-1}) = \varphi_k(h) \varphi_k (h_{n+1}) \varphi_k(h)^{-1},\] and $\varphi_k (h_{n+1}) \in K_n$ since $h_{n+1} \in K_{n+1}$. Since $K_n$ is normal in $G$ by assumption, we conclude that $\varphi_i(g h_{n+1} g^{-1})$ belongs to $K_n$.

To prove that the inclusion $K_n \subset K_{n+1}$ is proper for all $n$, assume by contradiction that there is $n \geq 1$ such that $K_{n} = K_{n+1}$, and take $g \in K_n$. According to (3) we may find $h \in H$ such that $\varphi_i(h) = g$ for every $i$, which implies that $h$ belongs to $K_{n+1}$ since $g \in K_n$. Therefore $h \in K_{n}$ by our assumption, and it follows that the element $g$ belongs to $K_{n-1}$. So $K_{n-1} = K_{n}$. By repeating the argument we obtain that $K_0 = K_1$, which is a contradiction with (1).
\end{proof}

We denote by \[ K_{\Phi} = \bigcup_{n \geq 0} K_n\] the increasing union of the subgroups $K_n$, which is therefore a normal subgroup of $G$, and by $G_\Phi = G / K_\Phi$ be the associated quotient. In other words, we have a sequence of groups and surjective homomorphisms \[ G \stackrel{\pi_0}{\longrightarrow} G/K_1 \stackrel{\pi_1}{\longrightarrow} \cdots \longrightarrow G/K_n \stackrel{\pi_n}{\longrightarrow} \cdots \] whose direct limit is $G_{\Phi}$.


In the sequel we assume moreover that $G$ is a compactly generated locally compact group, that $H$ is closed in $G$ and that all $\varphi_i: H \rightarrow G$ are continuous. Note that the continuity of the $\varphi_i$ implies that the $K_n$ are closed subgroups of $G$, but it may happen that $K_{\phi}$ is not closed (as the example $H = G=\mathbf{S}^1$ and $\varphi(z) = z^2$ shows, for which $K_{\Phi}$ is dense). From now on we will assume that $K_{\phi}$ is also closed in $G$, and hence $G_{\Phi}$ is a locally compact group.

\begin{rem} \label{rem-K-closed}
If there is a neighbourhood of the identity $U$ such that $U \cap \ker (\varphi_i) = 1$ and $\varphi_i(U) \subset U$ for every $i$, then $K_{\Phi}$ is a discrete subgroup of $G$.
\end{rem}

Since $K_{\Phi}$ is the increasing union of its closed subgroups $K_n$, by the Baire category theorem the subgroups $K_n$ must eventually be open in $K_{\Phi}$. In particular for every $k > 0$, the intersection between $K_{\Phi}$ and $S^k$ is compact in $K_{\Phi}$, and therefore must be contained in some subgroup $K_{n_k}$. Since $K_{n_k}$ is a normal subgroup of $G$, this shows that the normal subgroup of $G$ generated as such by elements of $K_{\Phi}$ of length at most $k$ is contained in $K_{n_k}$. Since all the inclusions $K_n \subset K_{n+1}$ are strict (Lemma \ref{lem-seq-Kn}), this shows that $K_{\Phi}$ is not compactly generated as a normal subgroup of $G$, and therefore the group $G_{\Phi}$ is not compactly presented by Lemma \ref{lem-gen-ext}.


\begin{thm} \label{thm-lim-fully}
Let $G$ be a compactly generated locally compact group, and assume that $\Phi = \left\{\varphi_i\right\}$ satisfy (1), (2) and (3). Then the group $G_{\Phi} = G / K_{\Phi}$ is densely related.
\end{thm}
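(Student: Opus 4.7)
The plan is to show that, for any compact generating set $S$ of $G$, the set $\mathcal{R}_S(G_\Phi)$ is thick in $\N$, in the sense that it meets every interval $[N/c,N]$ for some $c>0$ and all sufficiently large $N$. By Lemma \ref{lem-gen-ext} applied to $1\to K_\Phi\to G\to G_\Phi\to 1$, it is enough to show that $\mathcal{X}_S(K_\Phi,G)$ is thick. To this end I introduce the \emph{fringe lengths}
\[
R_n \;=\; \min\bigl\{|w|_S : w\in K_\Phi,\ w\notin K_n\bigr\},\qquad n\geq 0,
\]
which are finite by Lemma \ref{lem-seq-Kn}, and I will establish three claims: (a) $R_n\in\mathcal{X}_S(K_\Phi,G)$; (b) $R_n\to+\infty$; (c) there is a constant $M$ such that $R_{n+1}\leq MR_n$ for every $n$.

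Claim (a) is immediate from the normality of $K_n$ in $G$ (Lemma \ref{lem-seq-Kn}): if $w\in K_\Phi\setminus K_n$ realizes $R_n$, then by minimality every element of $K_\Phi$ of length $\leq R_n-1$ lies in $K_n$, so the normal subgroup of $G$ they generate lies in $K_n$, and therefore does not contain $w$. Claim (b) is the Baire-category statement recorded just above the theorem: the compact set $K_\Phi\cap S^k$ is contained in some $K_{n_k}$, so only finitely many $R_n$ can be $\leq k$.

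For the crucial claim (c), I exploit condition (3) to show that every $g\in G$ admits a lift $\sigma(g)\in H$ satisfying $\varphi_i(\sigma(g))=g$ for every $i$ and $|\sigma(g)|_S\leq M|g|_S$, for some constant $M$ independent of $g$. For each $s\in S$ I pick $h_s\in H$ with $\varphi_i(h_s)=s$ for every $i$, which exists by (3); the supremum $M=\sup_{s\in S}|h_s|_S$ is finite because $S$ is finite in the discrete case, and by a compactness/open-mapping argument applied to the continuous surjective homomorphism $(\varphi_1,\ldots,\varphi_r)^{-1}(\Delta(G))\to G$ in the locally compact case. Given $g\in G$ of length $k=|g|_S$, writing $g=s_1\cdots s_k$ and setting $\sigma(g)=h_{s_1}\cdots h_{s_k}\in H$ yields $\varphi_i(\sigma(g))=g$ by homomorphicity of each $\varphi_i$ and $|\sigma(g)|_S\leq kM$. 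Now take $w\in K_\Phi\setminus K_n$ realizing $R_n$ and write $w\in K_m\setminus K_{m-1}$ with $m\geq n+1$; the element $w'=\sigma(w)$ satisfies $\varphi_i(w')=w\in K_m$ for every $i$, so $w'\in\bigcap_i\varphi_i^{-1}(K_m)=K_{m+1}$, while $w'\notin K_m$, for otherwise $w=\varphi_i(w')\in K_{m-1}$. Hence $w'\in K_{m+1}\setminus K_m\subseteq K_\Phi\setminus K_{n+1}$, and $R_{n+1}\leq|w'|_S\leq MR_n$.

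Putting (a), (b) and (c) together, for any sufficiently large $N$, if $n$ is the smallest index with $R_n>N$ then $R_{n-1}\leq N<R_n\leq MR_{n-1}$, so $R_{n-1}\in[N/M,N]$. Hence $\{R_n\}\subseteq\mathcal{X}_S(K_\Phi,G)$ is thick in $\N$ and $G_\Phi$ is densely related. The main obstacle is step (c), specifically securing the uniform length bound $M<\infty$ in the locally compact setting; beyond that the argument is a clean iteration of (3), with (1) ensuring $K_\Phi\setminus K_n\neq\emptyset$ so that $R_n$ is defined, and (2) being invoked through Lemma \ref{lem-seq-Kn} for the normality of the $K_n$ underlying step (a).
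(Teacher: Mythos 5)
Your proof is correct and takes essentially the same route as the paper's: the two key ingredients — the Baire-category observation that $K_{\Phi}\cap S^{k}$ lies in a single $K_{n_k}$, and the uniformly length-bounded diagonal lift supplied by condition (3) (the paper's Lemma \ref{lem-hyp3-metric}), which pushes an element new at level $m$ to one new at level $m+1$ at multiplicative cost $M$ — are exactly those of the paper's argument. The only difference is bookkeeping: you parametrize by the level $n$ via the fringe lengths $R_n$ and pass through $\mathcal{X}_S(K_\Phi,G)$ and Lemma \ref{lem-gen-ext}, whereas the paper fixes a length scale $k$ and works directly with words in $F_S$; these are dual organizations of the same proof.
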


We will use the following lemma, the proof of which is standard.

\begin{lem} \label{lem-hyp3-metric}
Let $S$ be a compact generating subset of $G$. Then there exists $c >0$ such that for every $g \in G$, there exists $h \in H$ such that $\varphi_i(h) = g$ for every $i$ and $|h|_S \leq c |g|_S$.
\end{lem}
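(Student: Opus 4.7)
I would first introduce the closed subgroup $H' := \{h \in H : \varphi_1(h) = \cdots = \varphi_r(h)\} = \Psi^{-1}(\Delta(G))$, where $\Psi : H \to G^r$ is the continuous homomorphism $h \mapsto (\varphi_1(h), \ldots, \varphi_r(h))$. Hypothesis (3) says exactly that the continuous homomorphism $\pi := \varphi_1|_{H'} : H' \to G$ is surjective. Any element $h \in H$ satisfying $\varphi_i(h) = g$ for all $i$ necessarily lies in $H'$, so the lemma reduces to producing, for each $g \in G$, a $\pi$-preimage of linear size with respect to $|\cdot|_S$.

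The key step is to find a uniform bound on the $S$-length of some $\pi$-preimage of each element of $S$. Since $H$ is locally compact and compactly generated (as a closed finite-index subgroup of $G$), it is $\sigma$-compact, and hence so is its closed subgroup $H'$. By the open mapping theorem for $\sigma$-compact locally compact groups, $\pi$ is an open map. Picking a compact identity-neighborhood $U' \subset H'$, its image $U := \pi(U')$ is an identity-neighborhood in $G$. Cover the compact set $S$ by finitely many translates $g_1 U, \ldots, g_k U$ and choose $\pi$-preimages $h'_j \in H'$ of each $g_j$; then $T := \bigcup_j h'_j U'$ is compact in $H' \subset G$ and satisfies $\pi(T) \supset S$. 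Since $S$ generates $G$ and $T$ is compact in $G$, there exists $c \geq 1$ with $T \subset S^c$, i.e.\ every element of $T$ has $S$-length at most $c$.

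Given $g \in G$ with $|g|_S = n$, write $g = s_1 \cdots s_n$ with each $s_j \in S \cup S^{-1}$, and pick $h_j \in T \cup T^{-1}$ with $\pi(h_j) = s_j$. The product $h := h_1 \cdots h_n$ lies in $H'$, so that $\varphi_i(h) = s_1 \cdots s_n = g$ for every $i$, and $|h|_S \leq cn = c|g|_S$ as required. The only non-routine ingredient is invoking the open mapping theorem to guarantee that compact neighborhoods in $G$ admit compact lifts through $\pi$; the rest is standard word-length bookkeeping. I don't see a way to avoid some Baire-type input, since without it one cannot rule out preimages of elements of $S$ being arbitrarily long in $H'$.
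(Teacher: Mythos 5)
Your proof is correct and takes essentially the same route as the paper's: lift each generator in $S$ to an element of $H$ equalizing all the $\varphi_i$, of uniformly bounded $S$-length, and concatenate the lifts. The paper simply asserts the existence of a compact $\Sigma \subset H$ with $\varphi(\Sigma) = S$ (calling the proof ``standard''), which is precisely the compact-lift fact you justify via the open mapping theorem for $\sigma$-compact locally compact groups.
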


\begin{proof}
The image of $\varphi: H \rightarrow G^r$, $h \mapsto (\varphi_1(h),\ldots,\varphi_r(h))$, contains the diagonal in $G^r$ thanks to (3). Let $\Sigma \subset H$ be a compact subset of $H$ such that $\varphi(\Sigma) = S$ (where we identify $S$ and its diagonal embedding), and let $c = \sup |\sigma|_S$, for $\sigma \in \Sigma$. Since $\Sigma$ is compact, $c$ is finite. Now let $g \in G$, and let $\ell = |g|_S$, so that there exist $s_1,\ldots,s_\ell \in S$ such that $g = s_1 \cdots s_\ell$. For all $k$, let $\sigma_k \in \Sigma$ such that $\varphi(\sigma_k) = s_k$, and write $h = \sigma_1 \cdots \sigma_\ell$. Then $h$ satisfies the conclusion, because each $\varphi_i$ maps $h$ to $g$, and $|h|_S \leq \sum |\sigma_i|_S \leq c \ell = c |g|_S$.
\end{proof}

\begin{proof}[Proof of Theorem \ref{thm-lim-fully}]
Let $S$ be a compact generating subset of $G$, and $\pi: F_S \twoheadrightarrow G$ the canonical projection. For $k \geq 0$, let $N^{(k)} = \pi^{-1}(K_k)$. Note that $N^{(k)}$ is a normal subgroup of $F_S$ in view of Lemma \ref{lem-seq-Kn}. The increasing union $N^{(\infty)}$ of the subgroups $N^{(k)}$ is nothing but the kernel of the natural map from $F_S$ to the group $G_{\Phi}$: \[ 1 \longrightarrow N^{(\infty)} \longrightarrow F_S \longrightarrow G_{\Phi} \longrightarrow 1. \] For every $k \geq 1$ we let $N^{(\infty)}_k$ be the normal subgroup of $F_S$ generated as such by elements of $N^{(\infty)}$ of length at most $k$. 

Fix an integer $k \geq 1$. Denote by $n_k$ the smallest integer so that the set of elements of $N^{(\infty)}$ of word length at most $k$ lies in $N^{(n_k)}$. The discussion before the proposition shows that such an integer $n_k$ always exists. Note that since $N^{(n_k)}$ is a normal subgroup, we have $N^{(\infty)}_{k} \subset N^{(n_k)}$.

By definition of $n_k$, there must exist a word $w$ in $N^{(\infty)} \smallsetminus N^{(n_k-1)}$ of length at most $k$. Set $g = \pi(w)$. According to (3), there exists $h \in H$ such that $\varphi_i(h) = g$ for every $i$. Let $w' \in F_S$ such that $h = \pi(w')$. Then we claim that $w' \in N^{(\infty)}$. Indeed, for every $i$ we have $\varphi_i(\pi(w')) = \varphi_i(h) = g = \pi(w)$, so if $n$ is such that $w \in N^{(n)}$, then $w' \in N^{(n+1)}$. On the other hand the word $w'$ does not belong to $N^{(n_k)}$, because otherwise we would have $w \in N^{(n_k-1)}$. A fortiori $w'$ does not belong to $N^{(\infty)}_k$ since $N^{(\infty)}_{k} \subset N^{(n_k)}$. This implies that the relation range $\mathcal{R}_S(G_{\Phi})$ contains an element between $k$ and $\left| w' \right|_S$. 

Now we may plainly choose $w' \in F_S$ such that $|w'|_S = |h|_S$, and according to Lemma \ref{lem-hyp3-metric} we may also choose $h \in H$ such that $|h|_S \leq c |g|_S$ for some constant $c$ depending only on $S$. But $g= \pi(w)$, so the word length of $g$ with respect to $S$ is at most $|w|_S$. Since $|w|_S \leq k$ by definition of $w$, it follows that $\left| w' \right|_S \leq ck$. Therefore $\mathcal{R}_S(G_{\Phi}) \cap [c,ck] \neq \emptyset$, and the statement is proved.
\end{proof}


\subsubsection{The Grigorchuk group}

We consider the Grigorchuk group $\mathfrak{G}$ introduced in \cite{grig80}. 

\begin{thm} \label{thm-grigorchuk-dens-rel}
The Grigorchuk group $\mathfrak{G}$ is densely related.
\end{thm}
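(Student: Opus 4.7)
The plan is to realize $\mathfrak{G}$ as the group $G_\Phi$ associated to some triple $(G,H,\Phi)$ satisfying hypotheses (1)--(3) of \S\ref{subsec-non-hopf}, and then apply Theorem \ref{thm-lim-fully}. Take $G$ to be the finitely presented group $\langle a,b,c,d\mid a^2,b^2,c^2,d^2,bcd\rangle$, equipped with the canonical epimorphism $\pi:G\twoheadrightarrow\mathfrak{G}$ sending each generator to the eponymous Grigorchuk generator; using $bcd=1$ to eliminate $d=cb$, one sees $G\cong\Z/2\Z\ast(\Z/2\Z)^2$. Let $H$ be the kernel of $G\to\Z/2\Z$ sending $a\mapsto 1$ and $b,c,d\mapsto 0$, an index-$2$ subgroup with coset representatives $\{1,a\}$; by Reidemeister--Schreier, $H$ is generated by $\{b,c,d,\bar b,\bar c,\bar d\}$ with $\bar x:=axa$, and defining relations $x^2=1$, $bcd=1$, $\bar b\bar c\bar d=1$.

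Mimicking the self-similarity embedding $\psi:\textnormal{St}_{\mathfrak{G}}(1)\hookrightarrow\mathfrak{G}\times\mathfrak{G}$, I define $\varphi_1,\varphi_2:H\to G$ on generators by $(\varphi_1,\varphi_2)(b)=(a,c)$, $(\varphi_1,\varphi_2)(c)=(a,d)$, $(\varphi_1,\varphi_2)(d)=(1,b)$, and $\varphi_i(\bar x)=\varphi_{3-i}(x)$. A brief check verifies that both assignments kill the defining relations of $H$, so $\varphi_1,\varphi_2$ are well-defined surjective homomorphisms. Hypothesis (2), namely $\varphi_i\circ\mu_a=\varphi_{3-i}$, is immediate by construction. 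For hypothesis (3), I would exhibit explicit lifts in $H$ of the diagonal elements $(s,s)\in G\times G$ for each generator $s$ of $G$, mirroring level-transitivity of $\mathfrak{G}$ on the binary tree. For hypothesis (1), the explicit element $[d,\bar d]\in H$ works: $\varphi_1([d,\bar d])=[1,b]=1$ and $\varphi_2([d,\bar d])=[b,1]=1$, yet substituting $d=cb$ gives $[d,\bar d]=(da)^4$, which is non-trivial in the free product $G\cong\Z/2\Z\ast(\Z/2\Z)^2$ since $da$ has infinite order there.

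It remains to identify $G_\Phi=G/K_\Phi$ with $\mathfrak{G}$, i.e.\ to show $K_\Phi=\ker\pi$. The inclusion $K_\Phi\subset\ker\pi$ is routine: $\pi$ intertwines $\varphi_1,\varphi_2$ with the Grigorchuk recursion and the analogous $K_n$ in $\mathfrak{G}$ are trivial since $\psi$ is injective there, so each $K_n\subset\ker\pi$ inductively. For the reverse inclusion, Lysenok's theorem describes $\ker\pi$ as the normal closure in $G$ of the iterated relators $\sigma^k((ad)^4)$ and $\sigma^k((adacac)^4)$ for $k\geq 0$, where $\sigma$ is the canonical substitution; each such relator lies in some $K_{n(k)}$, because it can be written as a word built by finitely many applications of $\psi$ from a base Lysenok relator, which itself lies in $K_1$ (as the base relators are precisely elements whose images under both $\varphi_i$ are trivial in $G$). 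Applying Theorem \ref{thm-lim-fully} to $(G,H,\Phi)$ then yields that $\mathfrak{G}=G_\Phi$ is densely related.

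The main obstacle is carrying out rigorously the identification $\ker\pi=K_\Phi$: one must carefully synchronize the substitution $\sigma$ underlying the Lysenok presentation with the recursion built from $\varphi_1,\varphi_2$, showing a level-by-level match between the ascending chain $(K_n)$ and the filtration of $\ker\pi$ by iterated tree-action kernels. The remaining verifications (that $\varphi_i$ are well-defined homomorphisms, and the exhibition of diagonal preimages for hypothesis (3)) are straightforward computations in the free product $\Z/2\Z\ast(\Z/2\Z)^2$.
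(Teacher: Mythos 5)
Your proposal is essentially the paper's proof: the same $G=C_2\ast(C_2\times C_2)$, the same index-two subgroup $H$, the same pair of homomorphisms $\Phi$ encoding the wreath recursion, and the same appeal to Theorem \ref{thm-lim-fully}, with your explicit witness $(da)^4$ playing the role of the paper's ``simple verification'' of hypothesis (1). The two points you single out as remaining obstacles --- the diagonal containment in hypothesis (3) and the identification $G_\Phi\cong\mathfrak{G}$ (i.e.\ $K_\Phi=\ker\pi$) --- are exactly the steps the paper does not reprove but settles by citing Proposition 1 of \cite{Grig-minimal} and \cite[Th.\ 4]{Grig-Mama}, so your outline is consistent with the intended argument and these are known results rather than genuine gaps.
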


\begin{proof}
Let $G = \left\langle a,b,c,d \, | \, a^2=b^2=c^2=d^2=bcd=1\right\rangle$ be the free product $C_2 \ast (C_2 \times C_2)$, and $H$ be its subgroup of index two which projects trivially on the first $C_2$. $H$ is generated by $b,c,d$ and their conjugates by $a$. The assignments \[ \varphi_0 = \left( b \mapsto a, c \mapsto a, d \mapsto 1, aba \mapsto c, aca \mapsto c, ada \mapsto b \right) \] and \[ \varphi_1 =  \varphi_0 \circ \mu_a = \left(b \mapsto c, c \mapsto d, d \mapsto b, aba \mapsto a, aca \mapsto a, ada \mapsto 1 \right)\] extend to homomorphisms from $H$ onto $G$. 

It is well known that the group $\mathfrak{G}$ is generated by four elements $a,b,c,d$ satisfying the relations defining $G$, so that we have a surjective map $\pi: G \rightarrow \mathfrak{G}$. The image of $H$ is the subgroup $\mathfrak{H}$ of $\mathfrak{G}$ stabilizing the first level of the rooted tree on which $\mathfrak{G}$ acts. Moreover there exist morphisms $\psi_0,\psi_1 : \mathfrak{H} \rightarrow \mathfrak{G}$ such that the diagrams 
$$
\begin{CD}
H @> \varphi_i >> G \\
@VV \pi V @VV \pi V\\
\mathfrak{H} @> \psi_i >> \mathfrak{G}\\
\end{CD}
$$
commute (\cite[\S 2]{Grig-minimal}). 

We shall check that the assumptions of Theorem \ref{thm-lim-fully} are satisfied. It is a simple verification that $\Phi = \left\{\varphi_0,\varphi_1\right\}$ satisfies (1), and (2) is clear since $\varphi_1 =  \varphi_0 \circ \mu_a$ by definition. Now to see that (3) is satisfied, by the commutativity of the above diagrams it is enough to see that the image of $\psi = (\psi_0,\psi_1) : \mathfrak{H} \rightarrow \mathfrak{G} \times \mathfrak{G}$ contains the diagonal embedding of $\mathfrak{G}$. This later fact follows for instance from Proposition 1 in \cite{Grig-minimal}. Therefore by applying Theorem \ref{thm-lim-fully}, we deduce that the group $G_{\Phi}$ is densely related. The statement follows, since the group $G_{\Phi}$ is precisely the group $\mathfrak{G}$ \cite[Th.\ 4]{Grig-Mama} (see also \cite{Grig-minimal}).
\end{proof}

Combined with Corollary \ref{cor-full-pres-cones}, Theorem \ref{thm-grigorchuk-dens-rel} implies the following result.

\begin{cor}
No asymptotic cone of the Grigorchuk group is simply connected.
\end{cor}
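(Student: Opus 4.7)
The plan is a direct combination of the two preceding results. Theorem \ref{thm-grigorchuk-dens-rel} asserts that the Grigorchuk group $\mathfrak{G}$ is densely related, and $\mathfrak{G}$ is finitely generated, hence compactly generated as a (discrete) locally compact group. Corollary \ref{cor-full-pres-cones} therefore applies verbatim and yields immediately that none of the asymptotic cones of $\mathfrak{G}$ is simply connected.

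There is essentially no obstacle at this step; all the substantive work sits upstream. The deeper input is Theorem \ref{thm-grigorchuk-dens-rel}, where one verifies that Grigorchuk's self-similar data $(G,H,\Phi=\{\varphi_0,\varphi_1\})$ satisfies hypotheses (1)--(3) of the framework in \S\ref{subsec-non-hopf}, so that Theorem \ref{thm-lim-fully} forces the direct limit $G_{\Phi}\cong\mathfrak{G}$ to be densely related. The second ingredient, Corollary \ref{cor-full-pres-cones}, in turn rests on Proposition \ref{prop-1-cone-sc}: a hypothetical null-homotopy of a limit loop $\alpha$ arising from the witnessing relations $r_n$ of scales $s_n$ would, by uniform continuity on $\mathbf{I}_2$, produce a partition of $\alpha$ of arbitrarily small mesh; but Lemma \ref{lem-limit-loops}$(b)$ realizes any such partition as an ultralimit of partitions of the approximating loops $\alpha_n$, whose meshes must be at least $|r_n|_S\asymp s_n$ by the assumption that $r_n$ is not generated by shorter relations. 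Rescaling by $s_n$ produces a uniform lower bound on the mesh of $\alpha$'s partition, yielding the contradiction.

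Since both upstream results are already established in the excerpt, the plan reduces to citing them in order. I would write the proof in one or two lines, simply invoking Theorem \ref{thm-grigorchuk-dens-rel} followed by Corollary \ref{cor-full-pres-cones}, and noting that $\mathfrak{G}$ is finitely generated so the compactly generated locally compact hypothesis is automatic.
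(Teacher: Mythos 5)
Your proof is correct and is exactly the paper's argument: the corollary is obtained by combining Theorem \ref{thm-grigorchuk-dens-rel} (the Grigorchuk group is densely related) with Corollary \ref{cor-full-pres-cones} (densely related compactly generated groups have no simply connected asymptotic cone). Nothing further is needed.
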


We shall point out that an explicit infinite presentation of the group $\mathfrak{G}$ has been given by Lysionok in \cite{Lys}, and this presentation has been shown to be minimal by Grigorchuk \cite{Grig-minimal}. However we stress out that in general the relation range cannot be seen on the lengths of relators even for a minimal presentation, see Remark \ref{rem-min-RR}. The presentation of the group $\mathfrak{G}$ from \cite{Lys} is an example of a finite endomorphic presentation in the sense of Bartholdi \cite{Bar-L-pres}. These are presentations which are obtained by successive iterations of (finitely many) endomorphisms of the free group to a finite initial subset of relators. However the proof of Theorem \ref{thm-lim-fully} does not seem to apply directly to every group with a finite endomorphic presentation (even in the case when the endomorphisms of the free group induce endomorphisms of the group associated to the endomorphic presentation). We do not know whether every such group is either finitely presented or densely related.

The group $\mathfrak{G}$ is an example of group that is generated by a finite state automaton. As a preliminary version of this article was circulating, the following conjecture has been communicated to us by L.\ Bartholdi. 

\begin{conj}[Bartholdi]
If a group $G$ is generated by a finite state automaton, then $G$ is either finitely presented or densely related.
\end{conj}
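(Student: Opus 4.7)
The natural strategy is to extend the Grigorchuk-group argument (Theorem~\ref{thm-grigorchuk-dens-rel}) uniformly across the class of automaton groups by invoking Theorem~\ref{thm-lim-fully}. Let $\mathfrak{G}$ be the group generated by the states of a finite Mealy automaton acting faithfully on the $d$-regular rooted tree $T_d$. The level-$1$ stabilizer $\mathfrak{H} \le \mathfrak{G}$ is a normal subgroup of index dividing $d!$, and the wreath recursion supplies section homomorphisms $\psi_0,\ldots,\psi_{d-1} \colon \mathfrak{H} \to \mathfrak{G}$. As in the proof of Theorem~\ref{thm-grigorchuk-dens-rel}, one takes as ``cover'' a free product $G$ of cyclic groups matching the orders of the generators, together with the finite-index subgroup $H \le G$ projecting onto $\mathfrak{H}$; the sections then lift to homomorphisms $\varphi_i \colon H \to G$ such that $\pi \circ \varphi_i = \psi_i \circ \pi$ for the projection $\pi \colon G \twoheadrightarrow \mathfrak{G}$. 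The aim is to verify hypotheses (1)--(3) of Theorem~\ref{thm-lim-fully} for $\Phi = \{\varphi_i\}$, from which $\mathfrak{G} = G_\Phi$ is densely related.

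Hypothesis (2) should be essentially built into the wreath recursion: conjugation by a letter $s$ permutes the $\varphi_i$ exactly as the rooted permutation $\sigma_s \in \mathrm{Sym}(d)$ prescribed by the automaton. Hypothesis (1), non-triviality of $\bigcap_i \ker \varphi_i$, can be arranged thanks to the slack between $G$ and $\mathfrak{G}$: one seeks a non-trivial word in $H$ whose image under every $\varphi_i$ reduces to the identity in $G$ via the defining relators of the cyclic factors, which is possible as soon as some section accidentally identifies two generators of the same order (a situation that is generic in the automaton setting). The borderline configurations where no such word exists should correspond to degenerate automata (essentially abelian ones such as odometers) whose groups are finitely presented, and can be excluded at the outset.

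The decisive point is hypothesis~(3), i.e.\ that the image of $h \mapsto (\varphi_0(h),\ldots,\varphi_{d-1}(h))$ contains $\Delta(G)$. Descended to $\mathfrak{G}$, this is the \emph{self-replicating} (\emph{recurrent}, \emph{fractal}) property and it holds for many classical automaton groups (Grigorchuk, Gupta--Sidki, Basilica, \ldots) but not in full generality. My plan is to prove that for an \emph{infinitely presented} automaton group this property, or a suitable substitute retaining the linear lifting estimate of Lemma~\ref{lem-hyp3-metric}, can always be arranged after passing to a canonical finite-index (branch-type) characteristic subgroup. Finitely presented automaton groups fall on the ``or'' side of the dichotomy for free, so no further work is needed there.

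The main obstacle is precisely the verification of (3): finite-state automaton groups form a zoologically diverse class (bounded, polynomial, exponential activity; contracting or not), and the self-replicating property is genuinely restrictive. A promising auxiliary route, at least for contracting automata, is to exploit contraction directly: a non-trivial relation of length $n$ in $\mathfrak{G}$ forces non-trivial section relations of length $\le \eta n + K$ with $\eta < 1$, and reversing the inclusion through condition~(3) then produces relations at every scale. Extending such an argument to non-contracting automata, and formulating a weaker substitute for hypothesis~(3) that still suffices in Theorem~\ref{thm-lim-fully}, is where the genuine new work lies and where, I expect, the conjecture may require new ideas.
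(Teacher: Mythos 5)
You have not proved the statement, and neither does the paper: this is an open conjecture, attributed to Bartholdi, recorded without proof. The paper itself flags, immediately before stating it, that the method of Theorem~\ref{thm-lim-fully} ``does not seem to apply directly'' even to every group with a finite endomorphic presentation, and that the authors do not know whether the dichotomy holds there. So there is no argument in the paper for you to match, and your text is explicitly an outline of a strategy rather than a proof --- you yourself concede that the decisive step ``is where the genuine new work lies.''

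Concretely, the gaps are the ones you half-acknowledge but do not close. Hypothesis~(3) of Theorem~\ref{thm-lim-fully} (the image of $h\mapsto(\varphi_0(h),\dots,\varphi_{d-1}(h))$ containing the diagonal) is the self-replicating property, which fails for many automaton groups; your plan to ``arrange'' it, or a substitute preserving the linear estimate of Lemma~\ref{lem-hyp3-metric}, after passing to a finite-index branch-type subgroup is asserted, not argued, and no such reduction is known. Hypothesis~(1) is likewise waved through: the claim that automata for which $\bigcap_i\ker\varphi_i=1$ are ``degenerate'' with finitely presented groups is unsubstantiated. Finally, even when (1)--(3) hold, one still needs to identify $G_\Phi=G/K_\Phi$ with the automaton group itself; in the Grigorchuk case this identification is a theorem of Grigorchuk quoted in the proof of Theorem~\ref{thm-grigorchuk-dens-rel}, and nothing analogous is available for a general automaton group. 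As it stands, your proposal reproves (in sketch) the Grigorchuk case already in the paper and correctly locates the obstructions to generalizing it, but it does not settle the conjecture, which remains open.
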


\subsubsection{Non-Hopfian groups}

A particular case of the construction carried out in \S \ref{subsec-non-hopf} is when $G$ is a non-Hopfian group, $H = G$ and $\Phi = \left\{\varphi\right\}$, where $\varphi: G \rightarrow G$ is a surjective, non-injective endomorphism of $G$.

\begin{cor} \label{cor-lim-hopf-fully}
Let $G$ be a compactly generated locally compact group, and assume that $\varphi: G \rightarrow G$ is a continuous, surjective, non-injective endomorphism of $G$ such that $K_{\varphi}$ is closed in $G$. Then the group $G_{\varphi} = G / K_{\varphi}$ is densely related.
\end{cor}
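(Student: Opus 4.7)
The plan is to derive this corollary as a direct specialization of Theorem \ref{thm-lim-fully}. We take $H = G$ as the finite index normal subgroup (of index $r=1$), with the single coset representative $x_1 = 1_G$, and $\Phi = \{\varphi_1\}$ where $\varphi_1 = \varphi$. The whole proof then amounts to checking that this data satisfies hypotheses (1), (2), (3) preceding Theorem \ref{thm-lim-fully}, and identifying the resulting $K_\Phi$ with $K_\varphi$.

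For the verifications: hypothesis (1) says $\bigcap_i \ker(\varphi_i) = \ker(\varphi) \neq 1$, which is exactly the assumption that $\varphi$ is non-injective. Hypothesis (2) is trivially satisfied, since $\mu_{x_1} = \mu_{1_G}$ is the identity automorphism, so $\varphi \circ \mu_{x_1} = \varphi$. Hypothesis (3) requires that the map $h \mapsto (\varphi_1(h),\ldots,\varphi_r(h))$ from $H$ to $G^r$ has image containing the diagonal $\Delta(G)$; here this is simply the map $\varphi: G \to G$, and its image is all of $G$ by surjectivity of $\varphi$.

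Next I would check that the inductive definition $K_0 = 1$, $K_{n+1} = \bigcap_i \varphi_i^{-1}(K_n)$ reduces in this case to $K_{n+1} = \varphi^{-1}(K_n)$, which unfolds to $K_n = \ker(\varphi^n)$, so that $K_\Phi = \bigcup_n \ker(\varphi^n) = K_\varphi$. The standing assumptions that $G$ is compactly generated, $\varphi$ is continuous, and $K_\varphi$ is closed in $G$ match the locally compact hypotheses invoked just before Theorem \ref{thm-lim-fully} (noting that $H = G$ is automatically closed). Theorem \ref{thm-lim-fully} then applies and yields that $G_\Phi = G/K_\Phi = G/K_\varphi = G_\varphi$ is densely related, which is exactly the corollary. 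There is no real obstacle here, the statement being an immediate specialization of the general result to the case $r=1$.
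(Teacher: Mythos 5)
Your proof is correct and coincides with the paper's own argument: the corollary is obtained by specializing Theorem \ref{thm-lim-fully} to $H=G$, $r=1$, $\Phi=\{\varphi\}$, where (1) follows from non-injectivity, (2) is vacuous, and (3) follows from surjectivity, with $K_\Phi=\bigcup_n\ker(\varphi^n)=K_\varphi$. Your write-up is simply a bit more explicit about the identification $K_n=\ker(\varphi^n)$ and the topological hypotheses, which the paper leaves implicit.
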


\begin{proof}
The assumption (1) is satisfied because $\varphi$ is non-injective. There is nothing to check for (2) in this case, and (3) also holds here because $\varphi$ is surjective. The conclusion therefore follows from Theorem \ref{thm-lim-fully}.
\end{proof}

Here are examples of groups that can be obtained as direct limits of non-Hopfian groups by iterating a non-injective endomorphism. It follows from Corollary \ref{cor-lim-hopf-fully} that all the groups appearing in the following examples are densely related.

\begin{ex} \label{ex-BSr}
Let $r\geq 1$, and let $m,n \geq 2$ be two coprime integers. Let us consider the group $\mathrm{C}(m,n,r)$ defined by the presentation \[ \mathrm{C}(m,n,r) = \left\langle x,t \, \mid \, t^r x^m t^{-r} = x^n, [txt^{-1},x] = \ldots = [t^{r-1}xt^{-(r-1)},x] =1 \right\rangle. \] This family of groups appeared in \cite{BaumStre} as a generalization of Baumslag-Solitar groups $\mathrm{BS}(m,n)$, which correspond to the case $r=1$. When $m,n \geq 2$ are chosen relatively prime, the endomorphism given by $\varphi(x)=x^m$ and $\varphi(t)=t$ is well defined, surjective but non-injective. In this case the limit group $\mathrm{C}(m,n,r)_{\varphi}$ is the metabelian group $\Z[1/mn]^r \rtimes_M \Z$, where the action is defined by the $r \times r$ companion matrix \[ M = \begin{pmatrix} 0 & \cdots & 0 & n/m \\ 1 & \ddots &  & 0 \\ & \ddots & \ddots  & \vdots \\ 0 & & 1 & 0 \end{pmatrix}.\]
\end{ex}

\bigskip

Let us now recall the construction from \cite{Meier} of non-Hopfian HNN-extensions. Let $A$ be a group and let $\mu, \nu: A \rightarrow A$ be two injective and non-surjective endomorphisms. Assume that
\begin{enumerate}[label=(\alph*)]
\item $\mu \circ \nu = \nu \circ \mu$;
\item $\left\langle \mu(A), \nu(A) \right\rangle = A$;
\item $\exists \, \alpha \notin \mu(A), \, \beta \notin \nu(A); \, [\nu(\alpha), \mu(\beta)] = 1$;
\end{enumerate}
and consider the HNN-extension of $A$ associated to the subgroups $\mu(A)$ and $\nu(A)$ \[ G = \left\langle A,t \, \mid \, t \mu(a) t^{-1} = \nu(a) \, \, \, \forall a \in A \right\rangle. \] 

The following was proved in \cite[Lem.\ 1]{Meier}. 

\begin{lem}
The endomorphism $\varphi: G \rightarrow G$ defined by $\varphi(t) = t$ and $\varphi(a) = \mu(a)$ for every $a \in A$, is surjective but non-injective.
\end{lem}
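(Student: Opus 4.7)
The plan is to verify the three required properties of $\varphi$ in order: well-definedness, surjectivity, and non-injectivity, where the last is the interesting part.

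\textbf{Well-definedness.} Since $\mu, \nu$ commute by (a), we check that $\varphi$ preserves the defining HNN-relation: applying $\varphi$ to $t\mu(a)t^{-1}$ yields $t\mu^2(a)t^{-1}$, which by the relation in $G$ (applied to $\mu(a) \in A$) equals $\nu\mu(a)$; applying $\varphi$ to $\nu(a)$ yields $\mu\nu(a)$. These agree by (a).

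\textbf{Surjectivity.} Clearly $t = \varphi(t)$ lies in the image. For $a \in A$, hypothesis (b) writes $a$ as a word in elements of $\mu(A) \cup \nu(A)$. Each factor $\mu(b) = \varphi(b)$ is in the image directly, and each factor $\nu(b) = t\mu(b)t^{-1} = \varphi(t\, b\, t^{-1})$ is also in the image. Hence $A$, and therefore $G$, is contained in $\operatorname{im}(\varphi)$.

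\textbf{Non-injectivity.} This is the key step. I would exhibit the explicit element
\[ w = [\, t\alpha t^{-1},\, \beta\,] = t\alpha t^{-1}\,\beta\, t\alpha^{-1} t^{-1}\,\beta^{-1} \in G. \]
Applying $\varphi$ and using the HNN-relation, $\varphi(t\alpha t^{-1}) = t\mu(\alpha)t^{-1} = \nu(\alpha)$ and $\varphi(\beta) = \mu(\beta)$, so $\varphi(w) = [\nu(\alpha),\mu(\beta)] = 1$ by hypothesis (c). It remains to check that $w$ itself is non-trivial in $G$.

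\textbf{Britton's lemma.} The main obstacle (and the reason the hypotheses $\alpha \notin \mu(A)$ and $\beta \notin \nu(A)$ appear) is showing $w \neq 1$. Write $w$ as the sequence $t,\alpha,t^{-1},\beta,t,\alpha^{-1},t^{-1},\beta^{-1}$, a word of $t$-length four. The possible pinches in the HNN-extension (whose associated subgroups are $\mu(A)$ on the $t^{-1}$-side of $t$, and $\nu(A)$ on the $t$-side of $t^{-1}$) are the subwords $t\,\alpha\,t^{-1}$, $t^{-1}\,\beta\,t$, $t\,\alpha^{-1}\,t^{-1}$, $t^{-1}\,\beta^{-1}\,t$. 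A pinch $t\,\alpha^{\pm 1}\,t^{-1}$ is reducible iff $\alpha \in \mu(A)$, which fails by hypothesis; a pinch $t^{-1}\,\beta^{\pm 1}\,t$ is reducible iff $\beta \in \nu(A)$, which also fails. Hence $w$ is Britton-reduced with at least one stable letter, and Britton's lemma gives $w \neq 1$. This produces a non-trivial element of $\ker(\varphi)$, completing the proof.
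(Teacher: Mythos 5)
Your proof is correct. Note that the paper itself does not prove this lemma at all: it simply cites Lemma~1 of Meier's paper, so there is no internal argument to compare against; your argument (well-definedness from the commutation hypothesis (a), surjectivity from (b), and non-injectivity by showing the commutator $[t\alpha t^{-1},\beta]$ lies in $\ker\varphi$ via (c) while being Britton-reduced of positive $t$-length because $\alpha\notin\mu(A)$ and $\beta\notin\nu(A)$) is exactly the standard one behind the cited result. The only cosmetic slip is that $t^{-1}\beta^{-1}t$ is not actually a subword of $w$ as written, but listing it does no harm since the three genuine pinch candidates are all excluded by the hypotheses.
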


\begin{ex} \label{ex-LC-densely}
Let $\mathbf{K}_i$, $i=1,2$ be ultrametric local fields. Let $\mathcal{O}_i$ be the unique maximal compact subring in $\K_i$. Let $\pi_i$ be an element in $\K_i$ with $0<|\pi_i|<1$. Let us take $A = \mathcal{O}_1 \times \mathcal{O}_2$ and $\mu(x,y) = (x,\pi_2 y)$ and $\nu(x,y)=(\pi_1 x,y)$.

The HNN-extension $G$ defined above inherits a locally compact topology for which the inclusion of $A$ in $G$ is continuous and open. Since $\varphi(A) \subset A$ and $\varphi$ is injective on $A$, by Remark \ref{rem-K-closed} the subgroup $K_{\varphi}$ is a discrete subgroup of $G$. A simple computation shows that the group $G_{\varphi}$ is the group $(\mathbf{K}_1 \times \mathbf{K}_2) \rtimes \Z$, where the action is the multiplication by $(\pi_1,\pi_2^{-1})$.
\end{ex}


\begin{ex}
Discrete analogues of the construction of Example \ref{ex-LC-densely} may be carried out, for example with $A = \Z^2$ and $\mu(x,y) = (x,ny)$, $\nu(x,y)=(nx,y)$ for some integer $n \geq 2$. Here $G_{\varphi}$ is the metabelian group $\Z[1/n]^2 \rtimes \Z$, where the action is the multiplication by $(n,n^{-1})$.

Interesting examples also arise with $A$ non-abelian. Let for instance $A = H_3(\Z)$ be the Heisenberg group, and $\mu, \nu$  defined by \[ \mu: \begin{pmatrix} 1 & x & z \\ & 1 & y \\ & & 1 \end{pmatrix} \mapsto \begin{pmatrix} 1 & x & pz \\ & 1 & py \\ & & 1 \end{pmatrix}, \] \[ \nu: \begin{pmatrix} 1 & x & z \\ & 1 & y \\ & & 1 \end{pmatrix} \mapsto \begin{pmatrix} 1 & px & pz \\ & 1 & y \\ & & 1 \end{pmatrix}, \] where $p$ is a prime. Then one can check that the group $G_{\varphi}$ is the group \[ A_3(\Z[1/p]) = \begin{pmatrix} 1 & \Z[1/p] & \Z[1/p] \\ & p^{\Z} & \Z[1/p] \\ & & 1 \end{pmatrix}.\]
\end{ex}

\subsection{Finitely generated groups with a homomorphism to $\Z$}

\subsubsection{Preliminaries}

We say that an automorphism $\alpha: G \rightarrow G$ contracts into a subgroup $H$ if the sequence of subgroups $(\alpha^{-n}(H))_{n \geq 0}$ is increasing and ascends to $G$. If we have a homomorphism $\pi: G \twoheadrightarrow \Z$, we say that the action of $\Z$ contracts into a finitely generated subgroup of $G$ if there is $t \in G$ such that $\pi(t)$ is a generator of $\Z$ and the conjugation by $t$ contracts into a finitely generated subgroup of $G$. This is equivalent to saying that $\pi$ splits ascendingly over a finitely generated subgroup, i.e.\ that there is a decomposition of $G$ as an ascending HNN-extension whose associated homomorphism $G \twoheadrightarrow \Z$ is equal to $\pi$.

\bigskip


We now introduce some notation. Consider a finitely generated group $G=M\rtimes\Z$, and let $S=\{m_1,\dots,m_\ell\} \subset M$ and $t$ a generator for $\Z$ such that $S \cup \{t\}$ generates $G$. In the free group $F$ freely generated by the $m_i$ and $t$, we write $m_i^{[k]}=t^km_it^{-k}$. Let $R$ be the kernel of $F\to G$, and $R_n$ the normal subgroup of $F$ generated by the intersection of $R$ and the $n$-ball. 

For $u\le v$ let $M_{[u,v]}$ be the subgroup of $M$ generated by $\bigcup_{u\le k\le v}t^kSt^{-k}$. For every $n \geq 1$, let $G_n$ be the HNN-extension of $M_{[0,n]}$ along the isomorphism $M_{[0,n-1]}\to M_{[1,n]}$ given by conjugation by $t$. The inclusions $M_{[0,n]}\to M_{[0,n+1]}$ induce surjective homomorphisms $G_n\to G_{n+1}$. Let $R'_n$ be the kernel of the natural map $F \to G_n$.

\begin{lem}\label{rrp}
We have $R_{2n+3}\subset R'_n$.
\end{lem}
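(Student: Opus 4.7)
The plan is to prove the stronger claim that every $r\in R$ with $|r|_S\le 2n+3$ already lies in $R'_n$; since $R'_n$ is a normal subgroup of $F$, this yields the desired inclusion $R_{2n+3}\subset R'_n$. Write $r=s_1\cdots s_L$ with $L\le 2n+3$ and record the $t$-height walk $h_0=0,h_1,\ldots,h_L=0$, where $h_j-h_{j-1}\in\{-1,0,+1\}$ is $0$ precisely when $s_j$ is a letter from $S^{\pm1}$. Denote by $b$ the number of such positions and set $2a=L-b$.

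The first ingredient is the amplitude estimate $h_{\max}-h_{\min}\le a$: any walk with $a$ up-steps and $a$ down-steps starting and ending at $0$ must use at least $h_{\max}-h_{\min}$ up-steps simply to traverse between an extremal minimum and an extremal maximum. Combined with $2a\le 2n+3-b$, this forces the amplitude to be at most $n$ as soon as $b\ge 2$. The edge cases are disposed of directly: when $b=0$, $r$ is a word in $t^{\pm1}$ of zero exponent and hence trivial already in $F$; when $b=1$, $r$ is the $F$-conjugate of a single letter $m_i^{\pm1}$, and membership in $R$ forces $m_i$ to be trivial in $M$, hence in the subgroup $M_{[0,n]}$, and therefore in $G_n$.

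For the main case $b\ge 2$, let $k_0=-h_{\min}\in[0,n]$ and replace $r$ by its $F$-conjugate $r^\ast=t^{k_0}rt^{-k_0}$, which preserves membership in both $R$ and $R'_n$ and whose accumulated $t$-exponents all lie in $[0,n]$. Process $r^\ast$ letter by letter in $G_n$, maintaining the invariant that after the first $j$ letters the accumulated element equals $m_j\cdot t^{h'_j}$ with $m_j\in M_{[0,n]}$ and $h'_j\in[0,n]$ the current $t$-exponent. Reading a $t^{\pm1}$ simply shifts $h'_j$; reading an $m_i^{\pm1}$ at height $h'_j$ uses
\[t^{h'_j}\cdot m_i^{\pm1}=\tau^{h'_j}(m_i^{\pm1})\cdot t^{h'_j}\qquad\text{in }G_n,\]
where $\tau:M_{[0,n-1]}\to M_{[1,n]}$ denotes the isomorphism given by conjugation by $t$. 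The element $\tau^{h'_j}(m_i^{\pm1})\in M_{[h'_j,h'_j]}\subset M_{[0,n]}$ is computed by iterating the defining HNN relation $tmt^{-1}=\tau(m)$ (valid for $m\in M_{[0,n-1]}$) exactly $h'_j$ times, and each iteration is legitimate because $h'_j\le n$. At the end the image of $r^\ast$ in $G_n$ is some $m_L\in M_{[0,n]}$; its further image in $M$ under $G_n\to G$ coincides with the $M$-part of $r^\ast$ in $G$, which vanishes since $r^\ast\in R$. As $M_{[0,n]}$ embeds in $M$, we deduce $m_L=1$ in $M_{[0,n]}$, and hence $r^\ast=1$ in $G_n$, as required. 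The crux of the argument is the combinatorial amplitude-and-window step: the constant $2n+3$ is precisely what is needed for the walk, after the shift by $t^{k_0}$, to fit inside the safe window $[0,n]$, where the HNN presentation of $G_n$ can be iterated without ever leaving the vertex group $M_{[0,n]}$.
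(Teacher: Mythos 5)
Your proof is correct and follows essentially the same route as the paper: bound the range of the $t$-height walk of a relation of length at most $2n+3$ by $n$, conjugate by a power of $t$ so that the walk stays in $[0,n]$, and conclude that the word represents an element of the vertex group $M_{[0,n]}$ of $G_n$, which maps injectively to $G$ and is therefore trivial in $G_n$. The only (harmless) difference is bookkeeping: the paper rules out $2n+2$ occurrences of $t^{\pm 1}$ via cyclic conjugation and the vanishing $t$-exponent sum, while you handle the cases with at most one $S$-letter directly and also spell out the normal-form computation that the paper leaves implicit.
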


Note that typically, we have elements of the form $[m,t^{n+1}mt^{-n-1}]$ in $R_{2n+4}$ that are not in $R'_n$, so the lemma is optimal.

\begin{proof}
Consider an element $w$ in the $(2n+3)$-ball of $F$ that is a relation in $G$. The number of occurrences of $t$ and its inverse is even. First suppose this is $2n+2$: then up to cyclic conjugation, this element has the form $t^{2n+2}m$ and this is a contradiction. Hence it is at most $2n$. We can view the occurrences of $t$ as a loop in $\Z$ of length $\le 2n$; hence its diameter in $\Z$ is at most $n$. Therefore, after conjugating $w$ (and possibly increasing its length), we can suppose that this path in $\Z$ has range in $[0,n]$; this means that $w\in R'_n$.
\end{proof}


\subsubsection{Abelian-by-cyclic groups}

In this paragraph we prove that a finitely generated abelian-by-cyclic group that is not finitely presented must be densely related. Although a more general result will be obtained later, here we include an elementary proof for this class of groups.

\begin{lem} \label{lem-ij}
Assume that the action of $\Z$ does not contract into any finitely generated subgroup of $M$. Then for every $n \geq 1$ there exist $i,j$ such that $m_i \notin M_{[1,n]}$ and $m_j^{[n]} \notin M_{[0,n-1]}$.
\end{lem}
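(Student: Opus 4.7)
\medskip

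I would prove the contrapositive. Suppose there exists $n \geq 1$ such that either (a) $m_i \in M_{[1,n]}$ for all $i$, or (b) $m_j^{[n]} \in M_{[0,n-1]}$ for all $j$; I will exhibit a finitely generated subgroup of $M$ into which the action of $\Z$ contracts. The candidate in both cases is $H := M_{[0,n-1]}$, which is finitely generated by $\{m_i^{[k]} : 0 \leq k \leq n-1,\ 1 \leq i \leq \ell\}$. Writing $\alpha_g$ for conjugation by $g$, hypothesis (a) is exactly the inclusion $M_{[0,n-1]} \subset M_{[1,n]}$, i.e.\ $\alpha_{t^{-1}}(H) \subset H$; hypothesis (b) is exactly $M_{[1,n]} \subset M_{[0,n-1]}$, i.e.\ $\alpha_t(H) \subset H$. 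So the contraction inequality in the definition of ``contracts'' comes for free, by taking the generator $s := t^{-1}$ of $\Z$ in case (a) and $s := t$ in case (b).

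The real content is to verify that the ascending chain $(\alpha_s^{-j}(H))_{j \geq 0}$ has union $M$. A direct computation shows $\alpha_s^{-j}(H) = M_{[j,\,j+n-1]}$ in case (a), and $\alpha_s^{-j}(H) = M_{[-j,\,n-1-j]}$ in case (b). In either case, ``half'' of the generators $m_i^{[k]}$ of $M$ lies in the union immediately: every $k \geq 0$ in case (a), since $m_i^{[k]} \in M_{[k,\,k+n-1]}$; and every $k \leq n-1$ in case (b), by taking $j = \max(0,-k)$.

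For the other half of the indices I would iterate the case hypothesis. In case (a), iterating $\alpha_{t^{-1}}(H) \subset H$ gives $\alpha_{t^{-1}}^j(H) = M_{[-j,\,n-1-j]} \subset H$ for every $j \geq 0$; choosing $j = -k$ then shows $m_i^{[k]} \in H$ for every $k < 0$. In case (b), starting from $m_i^{[n]} \in H$ (which is hypothesis (b)) and using $\alpha_t(H) \subset H$, an induction on $k \geq 0$ via $m_i^{[n+k]} = \alpha_t(m_i^{[n+k-1]})$ gives $m_i^{[k]} \in H$ for all $k \geq n$.

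The main obstacle is precisely this ``wrong-side'' portion of the ascent: a single application of the case hypothesis only covers indices on one side, and one needs the iteration above to propagate the containment to arbitrarily large (resp.\ small) $k$. Once this is in place, $H$ is a finitely generated subgroup of $M$ into which the action of $\Z$ contracts, contradicting the hypothesis of the lemma.
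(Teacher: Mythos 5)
Your proof is correct and follows essentially the same route as the paper: argue by contradiction/contraposition, observing that if either alternative fails for some $n$ then $G$ splits as an ascending HNN-extension over the finitely generated subgroup $M_{[0,n-1]}$ (the paper uses $M_{[0,n]}$ and $M_{[0,n-1]}$, respectively), with the stable letter $t^{-1}$ or $t$ according to the case. You merely spell out in more detail the verification, left implicit in the paper, that the conjugates $\alpha_s^{-j}(H)$ ascend to all of $M$.
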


\begin{proof}
Argue by contradiction and assume that for all $i$, we have $m_i \in M_{[1,n]}$. This implies that $M_{[1,n]}=M_{[0,n]}$, and hence $G$ is an ascending HNN-extension over a finitely generated subgroup of $M$, a contradiction. 

Similarly, if $m_j^{[n]}\in M_{[0,n-1]}$ for all $j$, then $M_{[0,n-1]}=M_{[0,n]}$ and again $G$ is an ascending HNN-extension, resulting to a similar contradiction.
\end{proof}

\begin{prop} \label{prop-bracket}
Assume that the action of $\Z$ does not contract into any finitely generated subgroup of $M$. Then for every $n \geq 1$ there exist $i,j$ such that $[m_i,m_j^{[n+1]}]$ is not contained in $R'_n$. 
\end{prop}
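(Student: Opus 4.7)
The plan is to realize $G_n$ as an HNN extension of $M_{[0,n]}$ and then apply Britton's lemma. By definition, $G_n$ has stable letter $t$ and partial isomorphism $M_{[0,n-1]}\to M_{[1,n]}$ sending $x\mapsto txt^{-1}$, so a word in $F$ is trivial in $G_n$ if and only if, when written in alternating form in the generators, it admits a sequence of Britton reductions (pinches of the form $txt^{-1}$ with $x\in M_{[0,n-1]}$, or $t^{-1}yt$ with $y\in M_{[1,n]}$) down to an element of $M_{[0,n]}$ equal to the identity.

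First, apply Lemma \ref{lem-ij} to fix $i$ and $j$ with $m_i\notin M_{[1,n]}$ and $a:=m_j^{[n]}=\alpha^n(m_j)\notin M_{[0,n-1]}$. Since for every $0\le k\le n-1$ we have $\alpha^k(m_j)\in M_{[k,k]}\subset M_{[0,n-1]}$, the word $t^nm_jt^{-n}$ can be reduced $n$ times inside $G_n$ to the single element $a\in M_{[0,n]}$. Hence $m_j^{[n+1]}=tat^{-1}$ inside $G_n$, and the commutator rewrites (as a word over $M_{[0,n]}\cup\{t,t^{-1}\}$) as
\[
[m_i,m_j^{[n+1]}] \;=\; m_i\cdot t\cdot a\cdot t^{-1}\cdot m_i^{-1}\cdot t\cdot a^{-1}\cdot t^{-1}.
\]

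The core of the argument is to verify that this alternating expression is Britton-reduced. There are exactly three potential pinches between consecutive $t^{\pm 1}$ letters: $tat^{-1}$, which is reducible only if $a\in M_{[0,n-1]}$; $t^{-1}m_i^{-1}t$, which is reducible only if $m_i\in M_{[1,n]}$; and $ta^{-1}t^{-1}$, again requiring $a\in M_{[0,n-1]}$. By the choice of $i$ and $j$ all three fail, so the word is Britton-reduced and contains occurrences of $t$, hence represents a non-trivial element of $G_n$. This means $[m_i,m_j^{[n+1]}]\notin R'_n$, as required.

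The main obstacle is purely bookkeeping: being careful that the HNN presentation of $G_n$ has the partial isomorphism oriented so that the forbidden subgroups for pinches are precisely $M_{[0,n-1]}$ (for $t\ast t^{-1}$) and $M_{[1,n]}$ (for $t^{-1}\ast t$), matching the two non-containments delivered by Lemma \ref{lem-ij}. Once this matching is set up, the contradiction produced by Britton's lemma is immediate and does not use abelianness of $M$ beyond the computation of the commutator as a word.
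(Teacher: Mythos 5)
Your proof is correct and follows essentially the same route as the paper: choose $i,j$ via Lemma \ref{lem-ij}, rewrite $[m_i,m_j^{[n+1]}]$ as $m_i\,t\,m_j^{[n]}\,t^{-1}\,m_i^{-1}\,t\,(m_j^{[n]})^{-1}\,t^{-1}$ with $m_j^{[n]}$ viewed in $M_{[0,n]}$, and apply Britton's lemma to the HNN decomposition of $G_n$. The only difference is presentational (you verify reducedness directly, the paper argues by contradiction), so there is nothing to add.
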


\begin{proof}
Let $i,j$ be as in the conclusion of Lemma \ref{lem-ij}. Suppose that $[m_i,m_j^{[n+1]}]$ is a relation in $G_n$. We rewrite it as the equality in $G_n$
\[m_itm_j^{[n]}t^{-1}m_i^{-1}t(m_j^{[n]})^{-1}t^{-1}=1.\]
Once we view $m_i^{[n]}$ as an element of $M_{[0,n]}$, the Britton lemma implies that this expression is not reduced (with respect to the given HNN decomposition of $G_n$). This means that either $m_i\in M_{[1,n]}$, or $m_j^{[n]}\in M_{[0,n-1]}$, and both cases contradict the definition of $i$ and $j$.


Hence $[m_i,m_j^{[n+1]}]$ is not a relation in $G_n$, which exactly means that $[m_i,m_j^{[n+1]}]$ does not belong to $R'_n$. 
\end{proof}

\begin{cor} \label{cor-ab-par-Z}
Let $G = M \rtimes \Z$ be an abelian-by-cyclic finitely generated group. Then $G$ is either finitely presented or densely related.
\end{cor}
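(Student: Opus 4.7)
The plan is to dichotomize on whether the action of $\Z$ on $M$ contracts into some finitely generated subgroup of $M$, and show that each of these two cases gives one of the two possible conclusions in the statement.

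First, suppose the action contracts into a finitely generated subgroup $M_0$ of $M$. By definition, this means $G$ admits a decomposition as an ascending HNN-extension whose base group is $M_0$, which is a finitely generated abelian group and hence finitely presented. An ascending HNN-extension of a finitely presented group is finitely presented (one relation per generator of the base group, plus a finite presentation of the base), so $G$ is finitely presented in this case.

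From now on assume the action does not contract into any finitely generated subgroup of $M$. We aim to show $G$ is densely related, and the key is the abelianity of $M$. For each $n \geq 1$, Proposition \ref{prop-bracket} supplies indices $i,j$ (depending on $n$) such that the commutator $w_n := [m_i, m_j^{[n+1]}]$ does not lie in $R'_n$. Since $M$ is abelian and both $m_i$ and $m_j^{[n+1]} = t^{n+1} m_j t^{-n-1}$ lie in $M$, their commutator is trivial in $G$, so $w_n \in R$. Its word length in $F$ equals $2 \cdot 1 + 2(2n+3) = 4n+8$, hence $w_n \in R_{4n+8}$. Combining with Lemma \ref{rrp}, we have $R_{2n+3} \subset R'_n$, so $w_n \notin R_{2n+3}$. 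Therefore the inclusion $R_{2n+3} \subsetneq R_{4n+8}$ is strict, and this forces some integer $k_n \in [2n+4, 4n+8]$ to belong to $\mathcal{R}_S(G)$.

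Finally, given any $N \geq 1$, picking $n$ so that $2n+4 \leq N \leq 2n+6$ (possible for $N$ large) yields an element of $\mathcal{R}_S(G)$ in $[N, 2N+8]$, hence in $[N, 3N]$ once $N$ is sufficiently large. Absorbing the finitely many small values of $N$ into a larger multiplicative constant, this proves $\mathcal{R}_S(G) \cap [N, cN] \neq \emptyset$ for all $N \geq 1$, so $G$ is densely related.

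The only nontrivial step is the abelianity argument: we need to guarantee that the commutator produced by Proposition \ref{prop-bracket} is actually a relation in $G$, which is precisely where the hypothesis $M$ abelian enters. The rest is a length bookkeeping combining Proposition \ref{prop-bracket} with Lemma \ref{rrp}.
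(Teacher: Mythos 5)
Your proof is correct and takes essentially the same route as the paper: in the contracting case $G$ is an ascending HNN-extension over a finitely generated abelian (hence finitely presented) base, and otherwise Proposition \ref{prop-bracket} together with Lemma \ref{rrp} yields, for every $n$, a relation (the commutator $[m_i,m_j^{[n+1]}]$, trivial in $G$ precisely because $M$ is abelian) of length $4n+8$ that is not generated by relations of length at most $2n+3$, hence an element of $\mathcal{R}_S(G)$ in $[2n+4,4n+8]$. The only microscopic slip is in the final bookkeeping, where with your choice $2n+4\le N\le 2n+6$ the produced element could be as small as $N-2$ rather than lying in $[N,2N+8]$, but this is harmless after adjusting the multiplicative constant.
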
 

\begin{proof}
We assume that $G$ is not finitely presented, and we show that $G$ is densely related. The action of $\Z$ cannot contract into a finitely generated subgroup of $M$, because otherwise $G$ would be an ascending HNN-extension over a finitely presented subgroup because $M$ is abelian. Therefore $G$ would be finitely presented, which is a contradiction. So we may apply Proposition \ref{prop-bracket}, which, combined with Lemma \ref{rrp}, says that the relation range of $G$ with respect to $S \cup \{t\}$ contains an element of the interval $[2n+4,4n+8]$ for every $n \geq 1$. This implies the statement.
\end{proof}

\subsubsection{Groups satisfying a law}

In this paragraph we deal with the larger class of groups $G$ satisfying a law. Recall that this means that there exists a non-trivial reduced word $w(x_1,\ldots,x_k)$ in some letters $x_1,\ldots,x_k$ such that $w(g_1,\ldots,g_k) = 1$ for all $g_1, \ldots, g_k \in G$. Note that if we have $G = M \rtimes \Z$, then $G$ satisfies a law if and only if $M$ satisfies a law.

We follow a similar approach as in the previous paragraph, except that $[m_i,m_j^{[n+1]}]$ need not be a relation anymore. We need the following simple lemma.

\begin{lem}\label{routine}
Consider a group amalgam $A\ast_C B$, and $a\in A\smallsetminus C$, $b\in B\smallsetminus C$. Then for every $k\in\Z\smallsetminus\{0\}$, we have $(ab)^k\notin A\cup B$.

If moreover we have, for all $k\in\Z\smallsetminus\{0\}$, $a^k\notin C$ and $b^k\notin C$, then $a,b$ freely generate a free subgroup.
\end{lem}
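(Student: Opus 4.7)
The plan is to apply the normal form theorem for amalgamated free products, in the form: any sequence $(g_1,\dots,g_n)$ with $n\geq 1$, each $g_i$ lying in $A$ or $B$ but not in $C$, and consecutive $g_i$'s lying in different factors, represents a non-identity element of $A\ast_C B$; moreover for $n\geq 2$ this element lies neither in $A$ nor in $B$. Both assertions then reduce to recognizing an alternating reduced expression.

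For the first assertion, for $k\geq 1$ I would write
\[
(ab)^k=a\cdot b\cdot a\cdot b\cdots a\cdot b,
\]
which is a reduced alternating sequence of length $2k\geq 2$ since $a\in A\smallsetminus C$ and $b\in B\smallsetminus C$. For $k\leq -1$, I would instead use $(ab)^k=b^{-1}\cdot a^{-1}\cdots b^{-1}\cdot a^{-1}$ of length $2|k|\geq 2$, noting that $a^{-1}\in A\smallsetminus C$ and $b^{-1}\in B\smallsetminus C$ since $C$ is a subgroup. The normal form theorem then yields $(ab)^k\notin A\cup B$.

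For the second assertion, under the extra hypothesis every non-zero power $a^k$ lies in $A\smallsetminus C$ and every non-zero power $b^k$ lies in $B\smallsetminus C$. A non-trivial element of the abstract free group on $\{a,b\}$ has a unique reduced expression as an alternating product $a^{k_1}b^{k_2}a^{k_3}\cdots$ (or starting with a power of $b$) with every $k_i\neq 0$; its image in $A\ast_C B$ is thus a reduced alternating sequence in the sense above, hence non-trivial. So the natural map from the free group on $\{a,b\}$ to the subgroup $\langle a,b\rangle\leq A\ast_C B$ is injective.

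There is essentially no obstacle once the normal form theorem is invoked; the only point requiring care is checking that length-one edge cases (a single power of $a$ or $b$) are also non-trivial, which is immediate from $a^k,b^k\notin C$ and in particular $a^k,b^k\neq 1$.
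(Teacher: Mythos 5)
Your proof is correct and follows essentially the same route as the paper: both parts come down to exhibiting an alternating sequence with no factor in $C$ and invoking the normal form theorem for $A\ast_C B$. The only cosmetic difference is in the first assertion, where you cite the standard corollary that a reduced word of length $\geq 2$ lies outside the factors, while the paper derives this from mere nontriviality by multiplying $(ab)^k$ by an arbitrary $b_0^{-1}\in B$ and reducing.
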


\begin{proof}
For $k\ge 1$ and $b_0\in B$, we can write $(ab)^kb_0^{-1}=abab\dots a(bb_0^{-1})$, which is a reduced word, hence it represents a non-trivial element by the Britton lemma. So $(ab)^k\notin B$, and the same proof shows that $(ab)^k\notin A$.

Under the stronger assumption, it first follows that $a,b$ are not torsion. Hence if they satisfy a non-trivial relation, it can be chosen of the form $a^{m_1}b^{n_1}\dots a^{m_r}b^{n_r}$ with $r\ge 1$ and $m_i,n_i$ nonzero integers. The stronger assumption again implies that this word is reduced.
\end{proof}

\begin{lem} \label{lem-free-sub}
Let $n \geq 1$ such that there exist $i,j$ so that $m_i \notin M_{[1,n]}$ and $m_j^{[n]} \notin M_{[0,n-1]}$. Then \[ a = m_i^{[-2]} m_j^{[n-1]} \, \, \text{and} \, \, b = m_i m_j^{[n+1]} \] freely generate a free subgroup in $G_n$.
\end{lem}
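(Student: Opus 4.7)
I would view $G_n$ as the HNN extension with base $M_{[0,n]}$, stable letter $t$, and associated subgroups $M_{[0,n-1]}$ and $M_{[1,n]}$ identified by conjugation by $t$. The plan is to compute explicit HNN normal forms for $a$ and $b$, and then to show by a junction-by-junction analysis that every non-trivial reduced word in $\{a^{\pm 1}, b^{\pm 1}\}$ admits a reduced HNN form of positive $t$-syllable length, hence is non-trivial in $G_n$. This is essentially a ping-pong argument phrased in the language of Britton's lemma, and should succeed using only the two standing hypotheses on $m_i$ and $m_j^{[n]}$.

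\emph{Step 1 (normal forms).} The expressions $a = t^{-1} \cdot t^{-1} \cdot m_i \cdot t \cdot t \cdot m_j^{[n-1]}$ and $b = m_i \cdot t \cdot m_j^{[n]} \cdot t^{-1}$ are HNN-reduced of $t$-syllable length $4$ and $2$ respectively: the single candidate pinch $t^{-1} m_i t$ inside $a$ fails by $m_i \notin M_{[1,n]}$, and the single candidate pinch $t\, m_j^{[n]}\, t^{-1}$ inside $b$ fails by $m_j^{[n]} \notin M_{[0,n-1]}$. Analogous reduced forms hold for $a^{-1}$ and $b^{-1}$.

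\emph{Step 2 (junctions).} For a reduced word $w = x_1 \cdots x_k$ with $x_i \in \{a^{\pm 1}, b^{\pm 1}\}$, I would concatenate the HNN forms and inspect each of the twelve admissible adjacencies $(x_i, x_{i+1})$. A pinch at a junction requires opposite $t$-signs together with the merged middle element lying in the relevant edge subgroup: $M_{[1,n]}$ for a sub-pattern $t^{-1} g t$, and $M_{[0,n-1]}$ for $t g t^{-1}$. The two hypotheses rule out $m_i^{\pm 1} \in M_{[1,n]}$ and $(m_j^{[n]})^{\pm 1} \in M_{[0,n-1]}$, and a direct case check shows that every junction other than $a \cdot a$ and $a^{-1} \cdot a^{-1}$ either has matching $t$-signs (so no pinch is possible) or has a merged middle element in one of these forbidden lists. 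At an $a \cdot a$ or $a^{-1} \cdot a^{-1}$ junction a single pinch does occur, since $m_j^{[n-1]} \in M_{[0,n-1]}$; it replaces $t\, m_j^{[n-1]}\, t^{-1}$ by $\phi(m_j^{[n-1]}) = m_j^{[n]}$. The crucial observation is that this pinch does not cascade: the new middle element $m_j^{[n]}$ is again sandwiched between opposite $t$-signs, and a second pinch at the same junction would require $m_j^{[n]} \in M_{[0,n-1]}$, which is precisely the condition forbidden by the second hypothesis.

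\emph{Step 3 (length bound and conclusion).} Letting $n_a, n_b$ denote the numbers of letters of $w$ in $\{a, a^{-1}\}$ and $\{b, b^{-1}\}$ respectively, the pre-reduction $t$-syllable length is $4n_a + 2n_b$, and the number of pinches is at most $n_a - 1$; hence the reduced $t$-syllable length is at least $2(n_a + n_b) + 2$ when $n_a \geq 1$, and equals $2 n_b$ when $n_a = 0$. Both quantities are strictly positive as soon as $k \geq 1$, so by Britton's lemma $w \neq 1$ in $G_n$. Therefore $a$ and $b$ freely generate a free subgroup. The main obstacle I anticipate is the cascade step in Step 2, where the hypothesis on $m_j^{[n]}$ is used not merely to ensure that $b$ is HNN-reduced, but decisively to block further propagation of pinches through an $a \cdot a$ or $a^{-1} \cdot a^{-1}$ junction.
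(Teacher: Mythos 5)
Your proof is correct, but it takes a genuinely different route from the paper. The paper does not argue with Britton's lemma in the HNN extension $G_n$ directly: it passes to the kernel of $G_n\to\Z$, writes it as a bi-infinite iterated amalgam of the conjugates $M_{[k,k+n]}$, regroups this as a single amalgam $A\ast_C B$ over $C=M_{[0,n-1]}$ with $a\in A$ and $b\in B$, and then invokes Lemma \ref{routine}: its first part (applied inside the two one-sided sub-amalgams, using exactly your two hypotheses translated by conjugation) gives $a^k,b^k\notin C$ for all $k\neq 0$, and its second part immediately yields freeness, with no need to examine arbitrary words in $a,b$. Your approach instead stays in $G_n$ with base $M_{[0,n]}$ and stable letter $t$, computes reduced HNN forms of $a^{\pm1},b^{\pm1}$, and does the junction-by-junction pinch analysis; your identification of the one delicate point is exactly right, namely that the pinch at an $a\cdot a$ or $a^{-1}\cdot a^{-1}$ junction produces the middle element $m_j^{[n]}$ and the hypothesis $m_j^{[n]}\notin M_{[0,n-1]}$ blocks any cascade, while $m_i\notin M_{[1,n]}$ kills the only other opposite-sign junctions ($b\cdot b$ and $b^{-1}\cdot b^{-1}$). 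What each approach buys: the paper's is shorter and more modular once the amalgam lemma is available, and cleanly separates the two hypotheses into "general position" statements about $a$ and $b$ in their respective factors; yours is elementary and self-contained, avoids introducing the kernel and its amalgam structure, and gives an explicit lower bound ($2n_a+2n_b+2$, resp.\ $2n_b$) on the reduced $t$-length of a word. If you write it up, you should make explicit the small point that no cancellation can consume all the $t$-syllables of a single letter (each $a^{\pm1}$ loses at most one $t$-letter per side and each $b^{\pm1}$ loses none), so that after the local pinches the word really is Britton-reduced and no sandwich spanning an entire letter can appear.
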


\begin{proof}
$G_n$ being a HNN-extension, it is a semidirect product $N\rtimes\Z$ with $N$ being a 2-sided iterated amalgam 
\[\dots M_{[-2,n-2]} *_{M_{[-1,n-2]}} M_{[-1,n-1]} *_{M_{[0,n-1]}} M_{[0,n]} *_{M_{[1,n]}} M_{[1,n+1]} *_{M_{[2,n+1]}}\dots,\]
which thus is the amalgam of two 1-sided iterated amalgams:
\[\left(\dots M_{[-2,n-2]} *_{M_{[-1,n-2]}} M_{[-1,n-1]}\right) *_{M_{[0,n-1]}} \left(M_{[0,n]} *_{M_{[1,n]}} M_{[1,n+1]} *_{M_{[2,n+1]}}\dots\right)\]
\[=: A\ast_C B.\]

Here $a = m_i^{[-2]} m_j^{[n-1]}$ belongs to $M_{[-2,n-2]} *_{M_{[-1,n-2]}} M_{[-1,n-1]}$. Note that by assumption $m_i^{[-2]} \in M_{[-2,n-2]} \smallsetminus M_{[-1,n-2]}$ and $m_j^{[n-1]} \in M_{[-1,n-1]} \smallsetminus M_{[-1,n-2]}$, so it follows from the first item in Lemma \ref{routine} that for all $k\in\Z\smallsetminus\{0\}$, we have $a^k\notin M_{[-2,n-2]} \cup M_{[-1,n-1]} = M_{[-2,n-1]}$.

The same argument applied to $b = m_i m_j^{[n+1]} \in M_{[0,n]} *_{M_{[1,n]}} M_{[1,n+1]}$ shows that $b^k \notin M_{[0,n+1]}$ for all $k\in\Z\smallsetminus\{0\}$. In particular neither $a^k$ nor $b^k$ belongs to $M_{[0,n-1]}$, and the second item in Lemma \ref{routine} implies that $a,b$ freely generate a free subgroup.
\end{proof}




The following is the main result of this paragraph.

\begin{prop} \label{prop-law-by-Z}
Consider a finitely generated group $G = M \rtimes \Z$ satisfying a law. Then either the action of $\Z$ contracts into a finitely generated subgroup of $M$, or the group $G$ is densely related.  
\end{prop}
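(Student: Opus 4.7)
My plan is to combine the machinery already developed in Lemmas \ref{rrp}, \ref{lem-ij} and \ref{lem-free-sub} with the existence of a law in $G$. Assume that the action of $\Z$ does not contract into a finitely generated subgroup of $M$. Then for every $n\ge 1$, Lemma \ref{lem-ij} supplies indices $i,j$ with $m_i\notin M_{[1,n]}$ and $m_j^{[n]}\notin M_{[0,n-1]}$, and by Lemma \ref{lem-free-sub} the pair
\[
a = m_i^{[-2]}m_j^{[n-1]},\qquad b = m_i\,m_j^{[n+1]}
\]
freely generates a free subgroup of $G_n$. A direct count in $F$ gives $|a|_F=|b|_F=2n+4$, which will control all subsequent length estimates.

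Next I would exploit the law. Fix once and for all a non-trivial reduced word $w(x_1,\dots,x_k)\in F_k$ that is satisfied identically in $G$, and fix elements $y_1,\dots,y_k\in F_2=\langle x,y\rangle$ that freely generate a free subgroup of rank $k$ (for instance $y_i=x^iyx^{-i}$). Setting $W(x,y):=w(y_1,\dots,y_k)$, the element $W$ is non-trivial in $F_2$ because the substitution $x_i\mapsto y_i$ is injective on $F_k$. Substituting $x=a$ and $y=b$ in $G_n$, freeness of $\langle a,b\rangle$ in $G_n$ ensures that $W(a,b)$ is non-trivial in $G_n$, while the fact that $G$ satisfies $w$ guarantees $W(a,b)=1$ in $G$. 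In the language of $F$ this reads $W(a,b)\in R\smallsetminus R'_n$.

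To finish, I invoke Lemma \ref{rrp}: since $R_{2n+3}\subset R'_n$, the element $W(a,b)$ witnesses a relation in $G$ that is not generated by relations of length at most $2n+3$. Its length in $F$ is at most $|W|_{F_2}\cdot\max(|a|_F,|b|_F)\le C\,(2n+4)$, where $C=|W|_{F_2}$ depends only on the fixed law $w$ and the fixed $y_i$'s. Hence $\mathcal{R}_{S\cup\{t\}}(G)$ meets the interval $[2n+4,\,C(2n+4)]$ for every $n\ge 1$, which by definition means that $G$ is densely related.

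The only genuine obstacle is the passage from an arbitrary law $w(x_1,\dots,x_k)$ to a single non-trivial word on the two letters $a,b$ that remains non-trivial in $G_n$ and trivial in $G$. This is resolved essentially for free by embedding $F_k$ as a free subgroup of $F_2$: the freeness of $\langle a,b\rangle$ in $G_n$ then automatically guarantees $W(a,b)\ne 1$ in $G_n$, while the law in $G$ automatically forces $W(a,b)=1$ there. The rest is a routine length bookkeeping already prepared by the earlier lemmas.
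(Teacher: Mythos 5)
Your proposal is correct and follows essentially the same route as the paper: both use Lemmas \ref{lem-ij}, \ref{lem-free-sub} and \ref{rrp} to produce, for each $n$, a free pair $a,b$ of length $2n+4$ in $G_n$, and then evaluate the law $w$ on a rank-$k$ free subsystem inside $\langle a,b\rangle$ to obtain a relation of length $O(n)$ in $G$ that is not generated by relations of length at most $2n+3$. The only (immaterial) difference is your choice of embedding $F_k\hookrightarrow F_2$ via $x_i\mapsto x^i y x^{-i}$, where the paper uses the elements $a^i b^i$.
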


\begin{proof}
Let $w(x_1,\ldots,x_k)$ be a reduced word in some letters $x_1,\ldots,x_k$ such that $w(g_1,\ldots,g_k) = 1$ for every $g_1, \ldots, g_k \in G$, and let $\ell$ be the length of $w$. 

Assume that the action of $\Z$ does not contract into any finitely generated subgroup of $M$. Lemmas \ref{lem-ij} and \ref{lem-free-sub} ensure the existence, for every $n \geq 1$, of elements $a_n, b_n \in F$, both of length $2n+4$, whose images in $G_n$ freely generate a free subgroup. For simplicity we still denote $a_n$ and $b_n$ the corresponding elements of $G_n$. Then one easily check that $a_nb_n, a_n^2 b_n^2, \ldots, a_n^k b_n^k$ freely generate a free subgroup of rank $k$ in $G_n$. In particular the word $w(a_nb_n,\ldots, a_n^k b_n^k)$ does not belong to $R_n'$, and by Lemma \ref{rrp} it does not belong to $R_{2n+3}$ either. But now by definition of $w$, the word $w(a_nb_n,\ldots, a_n^k b_n^k)$ belongs to $R$. It has length at most $2k \ell (2n+4)$, so the relation range of $G$ with respect to $S \cup \{t\}$ must contain an element in the interval $[2n+4, 2k \ell (2n+4)]$.
\end{proof}

We derive from Proposition \ref{prop-law-by-Z} the following consequence. Recall that a group is called coherent if every finitely generated subgroup is finitely presented. Examples of coherent groups are locally nilpotent groups, or locally finite groups.

\begin{cor} \label{cor-coherent+law}
Let $G = M \rtimes \Z$ be a finitely generated group, where $M$ is coherent. If $G$ satisfies a law, then $G$ is either finitely presented or densely related.
\end{cor}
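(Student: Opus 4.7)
The plan is to apply Proposition \ref{prop-law-by-Z} directly and then use the coherence assumption on $M$ to rule out infinite presentability in the remaining case.

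First I would invoke Proposition \ref{prop-law-by-Z} on $G = M \rtimes \Z$, which provides the dichotomy: either $G$ is densely related (in which case the conclusion holds), or the action of $\Z$ on $M$ contracts into a finitely generated subgroup $H$ of $M$. Only the second alternative requires further work; the aim is to show it implies that $G$ is finitely presented.

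In that second case, by the definition recalled just before Proposition \ref{prop-law-by-Z}, the quotient map $G \twoheadrightarrow \Z$ splits ascendingly over a finitely generated subgroup, i.e.\ $G$ decomposes as an ascending HNN-extension with base group $H$ and stable letter $t$ whose associated endomorphism $\alpha = \mu_t|_H: H \to H$ is injective (in fact it is just conjugation by $t$ restricted to $H$, which is injective since $M$ itself is acted on by $t$ as an automorphism). Now I use coherence of $M$: since $H$ is a finitely generated subgroup of $M$ and $M$ is coherent, $H$ is finitely presented.

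The final step is the standard fact that an ascending HNN-extension over a finitely presented base is finitely presented: if $H = \langle X \mid R \rangle$ with $X, R$ finite, then
\[
G \;\cong\; \bigl\langle X,\,t \;\bigm|\; R,\ t x t^{-1} = \alpha(x) \text{ for all } x \in X \bigr\rangle,
\]
which is a finite presentation. Hence $G$ is finitely presented, completing the dichotomy.

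The only point that requires any thought is whether the ``contracting'' alternative really supplies a genuinely finitely presented base; this is where coherence is crucial, since Bieri--Strebel type phenomena (cf.\ the discussion before Theorem \ref{thm-intro-locfin-Z}) show that without coherence one may only get a finitely generated base. Once coherence is invoked the rest is routine.
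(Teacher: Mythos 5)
Your proposal is correct and follows essentially the same route as the paper: apply Proposition \ref{prop-law-by-Z}, use coherence of $M$ to upgrade the finitely generated base of the ascending HNN-extension to a finitely presented one, and conclude finite presentability of $G$ (the paper states this last step as clear, while you write out the explicit finite presentation). No gaps.
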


\begin{proof}
Assume that $G$ is not densely related. Then by Proposition \ref{prop-law-by-Z} the action of $\Z$ contracts into a finitely generated subgroup of $M$, and $G$ is an ascending HNN-extension over a finitely presented group because $M$ is coherent. This clearly implies that $G$ is finitely presented. 
\end{proof}

\begin{cor} \label{cor-nilp-by-Z}
Every nilpotent-by-cyclic finitely generated group is either finitely presented or densely related.
\end{cor}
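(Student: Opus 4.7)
The plan is to reduce directly to Corollary \ref{cor-coherent+law}. Let $G$ be a finitely generated nilpotent-by-cyclic group, and fix a normal nilpotent subgroup $N \trianglelefteq G$ with $G/N$ cyclic. First I dispose of the case when $G/N$ is finite: then $G$ is virtually nilpotent, finitely generated nilpotent groups are finitely presented (they are polycyclic), and finite presentability passes to finite-index overgroups, so $G$ itself is finitely presented.

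Assume therefore that $G/N \cong \Z$. Since $\Z$ is free, the extension $1 \to N \to G \to \Z \to 1$ splits and yields a semidirect product decomposition $G = N \rtimes \Z$, of the shape required by Corollary \ref{cor-coherent+law}. I then verify the two hypotheses of that corollary. Coherence of $N$ is immediate: every finitely generated subgroup of a nilpotent group is itself finitely generated nilpotent, hence finitely presented. For the law, observe that if $N$ is nilpotent of class $c$ then $N$ is solvable of some derived length $d$ (bounded in terms of $c$); since $G/N \cong \Z$ is abelian we have $G' \subseteq N$, so $G^{(d+1)} \subseteq N^{(d)} = 1$, and $G$ is solvable of derived length at most $d+1$. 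Solvable groups of fixed bounded derived length form a variety, defined by the iterated commutator law $\delta_{d+1}(x_1,\dots,x_{2^{d+1}}) = 1$, so $G$ satisfies a law. Corollary \ref{cor-coherent+law} then gives that $G$ is either finitely presented or densely related.

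There is no real obstacle here; the proof is essentially a matter of unpacking the hypotheses of the previously established Corollary \ref{cor-coherent+law}. The only point meriting attention is the verification that membership in the class of nilpotent-by-cyclic groups forces $G$ to satisfy a law, which reduces to the classical observation that such groups are solvable of derived length bounded in terms of the nilpotency class of $N$.
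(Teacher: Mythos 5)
Your proof is correct and follows exactly the route the paper intends: the corollary is stated as an immediate consequence of Corollary \ref{cor-coherent+law}, using that nilpotent groups are coherent and that a nilpotent-by-cyclic group is solvable, hence satisfies a law (the finite-quotient and splitting details you supply are the routine unpacking the paper leaves implicit).
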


\begin{rem}
We shall point out that Corollary \ref{cor-nilp-by-Z} extends neither to nilpotent-by-abelian groups according to Proposition \ref{prop-RR-abels}, nor to (locally nilpotent)-by-cyclic groups in view of the examples from \cite[\S 3.5]{OOS}.
\end{rem}


\subsubsection{Metabelian groups}

The following important theorem, the proof of which is a combination of results of Bieri-Strebel \cite{BS78,BS80} and Bieri-Groves \cite{BG84}, characterizes infinitely presented groups among finitely generated metabelian groups. This result was explained and used in \cite{BCGS} to determine the finitely generated metabelian groups that are of extrinsic condensation (see \cite{BCGS} for this terminology).

\begin{thm}[Bieri-Groves-Strebel] \label{thm-metab-inf-res}
Let $G$ be a finitely generated metabelian group. Then $G$ admits a homomorphism onto $\Z$ that does not contract into any finitely generated subgroup if and only if $G$ is not finitely presented.\qed
\end{thm}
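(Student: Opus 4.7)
The plan is to translate the contraction condition into the language of the Bieri--Strebel invariant $\Sigma(G) \subset S(G)$ on the character sphere $S(G) = (\Hom(G,\R)\smallsetminus\{0\})/\R_{>0}$, and then to invoke the known finiteness theorems for metabelian groups. Given a decomposition $G = M \rtimes \Z$ associated to an epimorphism $\pi: G \twoheadrightarrow \Z$ and $t \in G$ with $\pi(t) = 1$, the standard dictionary for metabelian groups reads: $-\pi \in \Sigma(G)$ iff the abelian module $M$ is finitely generated as a $\Z[t^{-1}]$-module, iff the automorphism $\mu_t$ (conjugation by $t$) contracts $M$ into a finitely generated subgroup in the sense of the paper; symmetrically $\pi \in \Sigma(G)$ iff $\mu_{t^{-1}}$ contracts. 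Because the paper's definition of contraction permits $\pi(t) = \pm 1$, the epimorphism $\pi$ fails to contract into any finitely generated subgroup of $M$ if and only if both $\pi$ and $-\pi$ lie in $\Sigma(G)^c$, i.e., $\{[\pi],[-\pi]\}$ is an antipodal pair lying entirely in the complement of $\Sigma$.

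With this dictionary in hand, the forward direction is immediate. The Bieri--Strebel criterion \cite{BS80} asserts that a finitely generated metabelian group is finitely presented (equivalently, of type $\mathrm{FP}_2$) if and only if $\Sigma(G)^c$ contains no antipodal pair, i.e., $\Sigma(G) \cup (-\Sigma(G)) = S(G)$. The existence of a non-contracting epimorphism $\pi$ provides exactly such an antipodal pair, and therefore obstructs finite presentation.

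For the converse, assume that $G$ is not finitely presented. The same Bieri--Strebel criterion furnishes a (possibly irrational) character $\chi \in S(G)$ with both $\pm\chi \in \Sigma(G)^c$. To promote $\chi$ to an honest epimorphism onto $\Z$ I would invoke the Bieri--Groves theorem \cite{BG84}, which states that for finitely generated metabelian groups $\Sigma(G)^c$ is a rationally defined spherical polyhedron. The symmetric subset $\Sigma(G)^c \cap (-\Sigma(G)^c)$ is then again a rationally defined polyhedron, nonempty by the preceding step, so it must contain a rational character; clearing denominators yields an epimorphism $\pi: G \twoheadrightarrow \Z$ with $\pm\pi \in \Sigma(G)^c$, which by the first paragraph does not contract into any finitely generated subgroup of $M$.

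The main obstacle is the rationality step in the converse direction: Bieri--Strebel's criterion by itself only delivers an a priori irrational antipodal pair in $\Sigma(G)^c$, and without the polyhedrality input of Bieri--Groves one cannot extract a homomorphism onto $\Z$. Granted both of these deep theorems, the remaining content of the proof is the algebraic dictionary between the $\Sigma$-invariant and ascending HNN-decompositions over finitely generated bases recalled in the first paragraph.
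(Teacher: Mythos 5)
Your proposal is correct and takes essentially the route the paper intends: the paper offers no proof of this statement, attributing it exactly to the combination of Bieri--Strebel (1978, 1980) and Bieri--Groves (1984), which is precisely the dictionary-plus-criterion-plus-rationality argument you assembled (contraction $\leftrightarrow$ antipodal pair in $\Sigma^c$, the BS80 finite presentability criterion, and BG84 polyhedrality to extract a rational character giving an epimorphism onto $\Z$). One cosmetic caveat: $M=\ker\pi$ is metabelian, not abelian, in general, so the module-theoretic dictionary should be phrased via $\Sigma^1(G)$ or via the $\Z[G/A]$-module structure of an abelian normal subgroup $A$ containing the derived subgroup, rather than of $M$ itself; this does not affect the argument.
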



The following is the main result of this paragraph.

\begin{thm} \label{thm-metab-dichotomoie}
Let $G$ be a finitely generated metabelian group. Then $G$ is either finitely presented or densely related.
\end{thm}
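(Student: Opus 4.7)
The plan is to combine Proposition \ref{prop-law-by-Z} with the Bieri--Groves--Strebel characterisation (Theorem \ref{thm-metab-inf-res}). If $G$ is finitely presented there is nothing to prove, so I will assume $G$ is infinitely presented and show it must be densely related.

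By Theorem \ref{thm-metab-inf-res}, the hypothesis that $G$ is not finitely presented provides an epimorphism $\pi : G \twoheadrightarrow \Z$ that does not contract into any finitely generated subgroup of $\ker\pi$. Since $\Z$ is free, $\pi$ admits a section, giving a semidirect product decomposition $G = M \rtimes \Z$ with $M = \ker \pi$. Moreover, being metabelian, $G$ satisfies the universal law $[[x,y],[z,w]]=1$, since both $[x,y]$ and $[z,w]$ lie in the abelian subgroup $[G,G]$.

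Now the splitting $G = M\rtimes\Z$ matches the setup of Proposition \ref{prop-law-by-Z} exactly, and the fact that $\pi$ does not contract into any finitely generated subgroup is precisely the negation of its first alternative (as explained in the discussion preceding Proposition \ref{prop-law-by-Z}, this ``contraction'' is equivalent to $G$ being an ascending HNN-extension over a finitely generated subgroup of $M$ with associated projection $\pi$). Applying Proposition \ref{prop-law-by-Z}, we conclude that $G$ is densely related.

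No step is really an obstacle here: the entire argument is a short dictionary translation between the Bieri--Groves--Strebel characterization and Proposition \ref{prop-law-by-Z}. The only thing to check is that the two notions of ``contraction'' are the same---one expressed as a property of $\pi$ alone, the other as a property of the $\Z$-action on $M$ after choosing a splitting---and this is already noted in the paper. Compared with Corollary \ref{cor-nilp-by-Z}, which relied on the coherence of $M$ to turn the ``contracting'' case into a finite presentation, here coherence of $M$ is not available; but we do not need it, since Bieri--Groves--Strebel directly rules out the contracting case from the start under the assumption of infinite presentability.
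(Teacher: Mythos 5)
Your proof is correct and follows essentially the same route as the paper: invoke the Bieri--Groves--Strebel characterization to obtain a non-contracting epimorphism onto $\Z$, note that this splits so that $G = M \rtimes \Z$, and apply Proposition \ref{prop-law-by-Z} using the metabelian law $[[x,y],[z,w]]=1$. The paper's proof is just a terser version of the same argument, leaving the splitting and the explicit law implicit.
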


\begin{proof}
If $G$ is not finitely presented, then Theorem \ref{thm-metab-inf-res} ensures the existence of a homomorphism from $G$ onto $\Z$ that does not contract into any finitely generated subgroup of $G$. Now since $G$ is metabelian, we may plainly apply Proposition \ref{prop-law-by-Z}, from which the conclusion follows. 
\end{proof}

The dichotomy appearing in Theorem \ref{thm-metab-dichotomoie} actually extends to the larger class of center-by-metabelian finitely generated groups.

\begin{cor} \label{cor-center-by-met}
Let $G$ be a center-by-metabelian finitely generated group. Then $G$ is either finitely presented or densely related.
\end{cor}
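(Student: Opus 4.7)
The plan is to pull back a non-contracting $\Z$-quotient from $G/Z$ to $G$ and apply Proposition~\ref{prop-law-by-Z}, exploiting that center-by-metabelian groups satisfy the non-trivial law $[[[x_1,x_2],[x_3,x_4]],x_5]=1$: since $G/Z$ is metabelian, $[[G,G],[G,G]]\subseteq Z$, and since $Z$ is central in $G$, a further commutator with any element of $G$ is trivial.

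The first step is to show that $G/Z$ itself cannot be finitely presented. If it were, a Schur-multiplier-type argument would force $G$ to be finitely presented, contradicting the hypothesis. Specifically, take a finitely generated free cover $F\twoheadrightarrow G$ with kernel $N\subseteq N_Q=\ker(F\to G/Z)$. Centrality of $Z=N_Q/N$ in $G$ gives $[N_Q,F]\subseteq N$. Since $G/Z$ is finitely presented, $N_Q$ is the normal closure in $F$ of a finite set $R$, so $[N_Q,F]$ is normally generated in $F$ by the finite set $\{[r,s]:r\in R,\,s\in S\}$ (with $S$ a free basis of $F$), and the abelian group $N_Q/[N_Q,F]$ is generated by the image of $R$, hence finitely generated. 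Therefore its subgroup $N/[N_Q,F]$ is finitely generated, and combining a lift of its generators with the finite normal generating set of $[N_Q,F]$ yields a finite normal generating set of $N$ in $F$, so $G$ would be finitely presented.

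Having ruled out that case, $G/Z$ is metabelian, finitely generated, and not finitely presented, so Theorem~\ref{thm-metab-inf-res} supplies a surjection $\phi:G/Z\twoheadrightarrow\Z$ that does not contract into any finitely generated subgroup of $G/Z$. Let $\pi=\phi\circ p:G\twoheadrightarrow\Z$ where $p:G\to G/Z$, and write $G=M\rtimes\Z$ with $M=\ker\pi$. I would then verify by direct pull-back that the $\Z$-action on $M$ does not contract into any finitely generated subgroup of $M$: if $M_0\subseteq M$ were such, with $tM_0t^{-1}\subseteq M_0$ and $\bigcup_n t^{-n}M_0t^n=M$ for $t\in G$ satisfying $\pi(t)=1$, then $p(M_0)$ would be a finitely generated subgroup of $\ker\phi\subseteq G/Z$ providing an ascending HNN decomposition of $G/Z$ with base $p(M_0)$ and stable letter $p(t)$, whose associated homomorphism is $\phi$, contradicting the choice of $\phi$.

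Finally, Proposition~\ref{prop-law-by-Z} applies to $G=M\rtimes\Z$ (which satisfies a law as noted) and yields that $G$ is densely related. The main technical hurdle in this outline is the first step: verifying that a central extension of a finitely presented group with finitely generated total space is automatically finitely presented. Once this structural fact is in hand, the pull-back reduction and the verification of the law are essentially routine.
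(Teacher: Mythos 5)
Your proof is correct and follows essentially the same route as the paper: handle the case where $G/Z$ is finitely presented by showing $G$ is then finitely presented, and otherwise apply Theorem \ref{thm-metab-inf-res} to $G/Z$, lift the non-contracting homomorphism to $G$, and conclude via Proposition \ref{prop-law-by-Z} using a law for center-by-metabelian groups. The only differences are that you spell out details the paper treats briefly (a direct free-cover argument for the central-extension finite presentability step, and the verification that the lifted homomorphism still does not contract), and these details are accurate.
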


\begin{proof}
Let $Z$ be a central subgroup of $G$ such that $Q =G/Z$ is metabelian. If $Q$ is finitely presented then $Z$ has to be finitely generated by Lemma \ref{lem-gen-ext}, and therefore finitely presented since $Z$ is abelian. So $G$ lies in an extension of finitely presented groups, and is therefore finitely presented. 

Now assume that $Q$ is not finitely presented. Since $Q$ is metabelian, by Theorem \ref{thm-metab-inf-res} there is a homomorphism $Q \rightarrow \Z$ that does not contract into any finitely generated subgroup of $Q$. This homomorphism may clearly be lifted to a homomorphism $G \rightarrow \Z$ with the same property, and since $G$ is solvable, Proposition \ref{prop-law-by-Z} implies that $G$ is densely related.
\end{proof}

\nocite{*}
\bibliographystyle{amsalpha}
\bibliography{denselyrelated}

\providecommand{\bysame}{\leavevmode\hbox to3em{\hrulefill}\thinspace}
\providecommand{\MR}{\relax\ifhmode\unskip\space\fi MR }
\providecommand{\MRhref}[2]{%
  \href{http://www.ams.org/mathscinet-getitem?mr=#1}{#2}
}
\providecommand{\href}[2]{#2}
\begin{thebibliography}{BCGS14}

\bibitem[Alo94]{Alo}
J.~M. Alonso, \emph{Finiteness conditions on groups and quasi-isometries}, J.
  Pure Appl. Algebra \textbf{95} (1994), no.~2, 121--129.

\bibitem[Bar03]{Bar-L-pres}
L.~Bartholdi, \emph{Endomorphic presentations of branch groups}, J. Algebra
  \textbf{268} (2003), no.~2, 419--443.

\bibitem[Bau61]{Bau-61}
G.~Baumslag, \emph{Wreath products and finitely presented groups}, Math. Z.
  \textbf{75} (1960/1961), 22--28.

\bibitem[BCGS14]{BCGS}
R.~Bieri, Y.~Cornulier, L.~Guyot, and R.~Strebel, \emph{Infinite presentability
  of groups and condensation}, J. Inst. Math. Jussieu \textbf{13} (2014),
  no.~4, 811--848.

\bibitem[BG84]{BG84}
R.~Bieri and J.~Groves, \emph{The geometry of the set of characters induced by
  valuations}, J. Reine Angew. Math. \textbf{347} (1984), 168--195.

\bibitem[Bow98]{Bowditch-RR}
B.~H. Bowditch, \emph{Continuously many quasi-isometry classes of
  {$2$}-generator groups}, Comment. Math. Helv. \textbf{73} (1998), no.~2,
  232--236.

\bibitem[BS76]{BaumStre}
G.~Baumslag and R.~Strebel, \emph{Some finitely generated, infinitely related
  metabelian groups with trivial multiplicator}, J. Algebra \textbf{40} (1976),
  no.~1, 46--62.

\bibitem[BS78]{BS78}
R.~Bieri and R.~Strebel, \emph{Almost finitely presented soluble groups},
  Comment. Math. Helv. \textbf{53} (1978), no.~2, 258--278.

\bibitem[BS80]{BS80}
\bysame, \emph{Valuations and finitely presented metabelian groups}, Proc.
  London Math. Soc. (3) \textbf{41} (1980), no.~3, 439--464.

\bibitem[BZ15]{Bri-Zhe}
J.~Brieussel and T.~Zheng, \emph{Speed of random walks, isoperimetry and
  compression of finitely generated groups}, arXiv:1510.08040v1 (2015).

\bibitem[CG16]{CG}
R.~Coulon and V.~Guirardel, \emph{Automorphisms and endomorphisms of lacunary
  hyperbolic groups}, ArXiv 1606.00679 (2016).

\bibitem[CH16]{Cor-dlH}
Y.~Cornulier and P.~de~la Harpe, \emph{Metric geometry of locally compact
  groups}, Book in preparation, arXiv:1403.3796v4, 2016.

\bibitem[Cor06]{Cor-wp}
Y.~Cornulier, \emph{Finitely presented wreath products and double coset
  decompositions}, Geom. Dedicata \textbf{122} (2006), 89--108.

\bibitem[CT13]{CT-Abels}
Y.~Cornulier and R.~Tessera, \emph{Dehn function and asymptotic cones of
  {A}bels' group}, J. Topol. \textbf{6} (2013), no.~4, 982--1008.

\bibitem[Dru02]{Drutu-cone}
C.~Dru{\c{t}}u, \emph{Quasi-isometry invariants and asymptotic cones},
  Internat. J. Algebra Comput. \textbf{12} (2002), no.~1-2, 99--135,
  International Conference on Geometric and Combinatorial Methods in Group
  Theory and Semigroup Theory (Lincoln, NE, 2000).

\bibitem[DS05]{DS}
C.~Dru{\c{t}}u and M.~Sapir, \emph{Tree-graded spaces and asymptotic cones of
  groups}, Topology \textbf{44} (2005), no.~5, 959--1058, with an appendix by
  D. Osin and M. Sapir.

\bibitem[GM97]{Grig-Mama}
R.~I. Grigorchuk and M.~J. Mamaghani, \emph{On use of iterates of endomorphisms
  for constructing groups with specific properties}, Mat. Stud. \textbf{8}
  (1997), no.~2, 198--206.

\bibitem[Gri80]{grig80}
R.~Grigorchuk, \emph{On {B}urnside's problem on periodic groups}, Funktsional.
  Anal. i Prilozhen. \textbf{14} (1980), no.~1, 53--54.

\bibitem[Gri84]{Grig-continuum}
R.~I. Grigorchuk, \emph{Degrees of growth of finitely generated groups and the
  theory of invariant means}, Izv. Akad. Nauk SSSR Ser. Mat. \textbf{48}
  (1984), no.~5, 939--985.

\bibitem[Gri99]{Grig-minimal}
\bysame, \emph{On the system of defining relations and the {S}chur multiplier
  of periodic groups generated by finite automata}, Groups {S}t. {A}ndrews 1997
  in {B}ath, {I}, London Math. Soc. Lecture Note Ser., vol. 260, Cambridge
  Univ. Press, Cambridge, 1999, pp.~290--317.

\bibitem[Gro93]{Gro-asy}
M.~Gromov, \emph{Asymptotic invariants of infinite groups}, Geometric group
  theory, {V}ol.\ 2 ({S}ussex, 1991), London Math. Soc. Lecture Note Ser., vol.
  182, Cambridge Univ. Press, Cambridge, 1993, pp.~1--295.

\bibitem[Ken14]{Kent}
C.~Kent, \emph{Asymptotic cones of {HNN}-extensions and amalgamated products},
  Algebr. Geom. Topol. \textbf{14} (2014), no.~1, 551--595.

\bibitem[LB14]{LB-lac-hyp}
A.~Le~Boudec, \emph{Locally compact lacunary hyperbolic groups}, Groups Geom.
  Dyn. (to appear) (2014).

\bibitem[Lys85]{Lys}
I.~G. Lys{\"e}nok, \emph{A set of defining relations for the {G}rigorchuk
  group}, Mat. Zametki \textbf{38} (1985), no.~4, 503--516, 634.

\bibitem[Mei82]{Meier}
D.~Meier, \emph{Non-{H}opfian groups}, J. London Math. Soc. (2) \textbf{26}
  (1982), no.~2, 265--270.

\bibitem[OOS09]{OOS}
A.~Olshanskii, D.~V. Osin, and M.~Sapir, \emph{Lacunary hyperbolic groups},
  Geom. Topol. \textbf{13} (2009), no.~4, 2051--2140, With an appendix by
  Michael Kapovich and Bruce Kleiner.

\bibitem[Pap96]{Papasoglu}
P.~Papasoglu, \emph{On the asymptotic cone of groups satisfying a quadratic
  isoperimetric inequality}, J. Differential Geom. \textbf{44} (1996), no.~4,
  789--806.

\bibitem[RW14]{Rein-Weid}
C.~Reinfeldt and R.~Weidmann, \emph{Makanin-{R}azborov diagrams for hyperbolic
  groups}, preprint (2014).

\bibitem[Sel99]{Sela-hopf}
Z.~Sela, \emph{Endomorphisms of hyperbolic groups {I}: The {H}opf property},
  Topology \textbf{38} (1999), no.~2, 301--321.

\bibitem[Tho06]{Thomas-Borel}
S.~Thomas, \emph{Cayley graphs of finitely generated groups}, Proc. Amer. Math.
  Soc. \textbf{134} (2006), no.~1, 289--294.

\bibitem[TV00]{TV}
S.~Thomas and B.~Velickovic, \emph{Asymptotic cones of finitely generated
  groups}, Bull. London Math. Soc. \textbf{32} (2000), no.~2, 203--208.

\end{thebibliography}

\end{document}